\subjclass[2010]{ 37C30,  30H25,  42B35, 42C15, 42C40} 
\keywords{ atomic decomposition, Besov space, harmonic analysis, wavelets}
\title{Besov-ish spaces through  atomic decomposition}
\author[D.  Smania]{Daniel Smania}
\address{Departamento de Matem\'atica \\
  Instituto de Ci\^encias Matem\'aticas e de Computa\c{c}\~ao-Universidade de S\~ao Paulo (ICMC/USP) - S\~ao Carlos \\
               Caixa Postal 668 \\ S\~ao Carlos-SP \\ CEP 13560-970 \\ Brazil.}
\email{smania@icmc.usp.br} 
\urladdr{\url{https://sites.icmc.usp.br/smania/}}
\thanks{We would like the thank  the referee for the useful suggestions.  D.S. was partially supported by CNPq 306622/2019-0, CNPq 307617/2016-5, CNPq Universal 430351/2018-6 and FAPESP Projeto Tem\'atico 2017/06463-3.}
\newtheorem{theorem}{Theorem}[section]
\newtheorem{corollary}[theorem]{Corollary}
\newtheorem{lemma}[theorem]{Lemma}
\newtheorem{proposition}[theorem]{Proposition}
\theoremstyle{definition}
\newtheorem{definition}[theorem]{Definition}
\newtheorem{remark}[theorem]{Remark}
\renewcommand{\theequation}{\thesection.\arabic{equation}}
\newcommand\numberthis{\addtocounter{equation}{1}\tag{\theequation}} %numeral asteristico
\newcommand{\eps}{\varepsilon}
\newcommand{\SB}{{\mathcal B}}
\newcommand{\SP}{{\mathcal P}}
\newcommand{\secdot}[1]{\arabic{#1}}
\newcommand{\Cll}[2][normal]{\Cl[#1]{#2}}
\newcommand{\Crr}[1]{\Cr{#1}}
\newcounter{change}
\newcommand{\change}[2][]{\refstepcounter{change} \todo[linecolor=blue,backgroundcolor=blue!25,bordercolor=blue,size=tiny,#1]{Revision \thechange - #2}}
\newcommand{\changei}[2][]{\refstepcounter{change}\todo[inline,linecolor=blue,backgroundcolor=blue!25,bordercolor=blue,size=tiny,#1]{Revision \thechange - #2}}
\providecommand\@dotsep{5}
\renewcommand{\listoftodos}[1][\@todonotes@todolistname]{%
  \@starttoc{tdo}{#1}}
\begin{document}

\begin{abstract} 
We use  the method of atomic decomposition to build  new families  of function spaces, similar to Besov spaces, in measure spaces with grids, a very mild assumption. Besov spaces with low regularity are considered in measure spaces with good grids, and  we obtain results on multipliers and left compositions in this setting. \end{abstract}

\maketitle

\setcounter{tocdepth}{3}
\tableofcontents

%to show list of changes, uncomment next line
%\listoftodos[LIST OF CHANGES IN THIS VERSION]

%\addtocontents{toc}{~\hfill\textbf{Page}\par}

\section{Introduction}

\changei{We changed the abstract a little bit.}
Besov spaces $B^s_{p,q}(\mathbb{R}^n)$  were  introduced  by Besov \cite{besov}.  This scale of  spaces has been a favorite over  \change{we replaced  "thought" by  "over"} the years, with thousands  of  references available.  Perhaps two  of its most interesting features is that many earlier classes of function spaces appear \change{we replaced "appears" by "appear"} in this scale, as Sobolev spaces, and also that there are many equivalent ways to define $B^s_{p,q}(\mathbb{R}^n)$, in such way that you \change{we replaced "in such way you" by ïn such way that you"} can pick the more suitable one for your purpose. The reader may see Stein \cite{stein}, Peetre \cite{peetre},  Triebel \cite{tbook}  for an introduction of Besov spaces on $\mathbb{R}^n$. For a historical account on Besov spaces and related topics, see  Triebel \cite{tbook} and the shorter \change{Replace  "more short" by  "shorter"} but useful Jaffard \cite{jaffard}, Yuan,  Sickel and  Yang  \cite{ysy}  and Besov and Kalyabin \cite{besov2}.\\ 

In the last decades there was \change{we replaced "were" by "was"} a huge amount of activity on the generalisation  of  harmonic analysis (see Deng and Han \cite{deng}), and Besov spaces in particular,   to less regular {\it phase spaces}, replacing $\mathbb{R}^n$ by something with a poorer structure. It turns out that for {\it small $s > 0$} and $p,q\geq 1$, a proper definition demands  strikingly weak assumptions.  There is a large body of literature that provides a definition and properties of Besov spaces on {\it homogeneous spaces}, as defined by Coifman and Weiss \cite{cw}. Those are quasi-metric spaces with a doubling measure, which includes in particular Ahlfors regular metric spaces. We refer to the  \change{we replaced "pionner"by "pioneer"}pioneer work of Han and  Sawyer \cite{hs} and   Han,  Lu and Yang  \cite{han2} for Besov spaces  on homogeneous spaces and more recently  Alvarado and   Mitrea \cite{sharp} and Koskela, Yang, and Zhou \cite{koskela2} (in this case for metric measure spaces) and Triebel \cite{fractal}\cite{fractal2} for Besov spaces on fractals. There is a long list of examples of homogeneous spaces in Coifman  and Weiss \cite{examples}. The d-sets as  defined in Triebel \cite{fractal} are also examples of homogeneous spaces. 
\vspace{5mm}

\noindent {\it We give a very elementary (and yet practical\change{we replaced "pratical" by "practical"}) construction of   Besov  spaces $\mathcal{B}^s_{p,q}$, with $p,q\geq 1$ and $0 < s < 1/p$, for  measure spaces endowed with   a grid, that is, a sequence of finite partitions of measurable sets satisfying certain mild properties. }
\vspace{5mm}

This construction is close to the  martingale Besov spaces  as defined by Gu and Taibleson \cite{martingale}, however we  deal with "nonisotropic  splitting" in our grid without   applying the Gu-Taibleson recalibration procedure to our grid, which  simplifies the definition and it allows a broader \change{we replaced "wider" by "broader"} class of examples. \\

The \change{we replaced "main" by "primary"} primary  tool in this work is the concept of {\it atomic decomposition}.  An atomic decomposition represents \change{ we replaced "is the representation of"  by "represents"} each function in a space of functions  as a (infinite) linear combination of fractions of the so-called {\bf atoms}. The advantage of atomic decompositions is \change{we replaced "in" by "is"}  that the  atoms are functions that are  often far more regular than a typical element of  the space.  \change{we replaced "But" by "However"} However a distinctive feature (compared with  Fourier series with either Hilbert basis  or unconditional basis)  is that  such atomic decomposition is in general {\it not} unique.  Nevertheless, in a successful atomic decomposition of a normed space  of functions,\change{ comma added}  we can attribute  a "cost" to each possible representation, and the norm of the space is equivalent to the minimal cost (or infimum) among all representations.  A function represented by a single atom  has norm at most one, so the term "atom" seems to be appropriated. \\

 Coifman \cite{coifman} introduced the atomic decomposition of the  real Hardy space $H^p(\mathbb{R})$ and Latter \cite{latter} found an atomic decomposition for $H^p(\mathbb{R}^n).$ The influential work of   Frazier and Jawerth \cite{fj} gave us an atomic decomposition for the  Besov spaces $B^s_{p,q}(\mathbb{R}^n)$. In the context of homogeneous spaces, we have results by Han,  Lu and Yang  \cite{han2} on the \change{we included "the"} atomic decomposition of Besov spaces by H\"older atoms.  \\
 
 Closer to the spirit  of  this work we have the atomic decomposition of Besov space $B^s_{1,1}([0,1])$, with $s\in (0,1)$,  by de Souza \cite{souzao1}  using special atoms, that we call  {\bf Souza's atoms} (see also De Souza \cite{souzao2} and de Souza, O'Neil and Sampson  \cite{sn}). A Souza's atom $a_J$ on an   interval $J$ is quite simple, consisting of a  function whose support is $J$ and $a_J$ is {\it constant} on $J$. \\
 
 We also refer to the results on the B-spline atomic decomposition of the Besov space  of the unit cube of $\mathbb{R}^n$ in DeVore and  Popov \cite{spline2} (with  Ciesielski \cite{spline1} as a precursor), that in the case $0< s < 1/p$ reduces to an atomic decomposition by Souza's atoms, and the work of  Oswald \cite{oswald0} \cite{oswald} on finite element approximation in bounded polyhedral domains on $\mathbb{R}^n$.\\
 
On the  study  of Besov spaces on $\mathbb{R}^n$ and smooth manifolds,  Souza's atoms may seem to have  setbacks that restrict  its usefulness. They are not smooth, so it is fair to doubt  the effectiveness  of  atomic decomposition by Souza's atoms to obtain  a better understanding  of a  partial differential equation or  to represent  data  faithfully/without artifacts, a constant concern in applications of  smooth wavelets (see Jaffard, Meyer and Ryan \cite{jaffard2}). \\

On the other hand, in the study of   ergodic properties  of piecewise smooth dynamical systems, \change{comma inserted}\change{we deleted "the"} {\it transfer operators} play \change{we replaced "plays" by "play"}  a huge role. Those operators act \change{we replaced "acts" by "act"} on  Lebesgue spaces $L^1(m)$,  but they are more useful if one can show \change{we replaced "it has a" by "they have a"}  they have a  (good) action on more regular function spaces. Unfortunately, \change{we included a comma}   in many cases the transfer operator  does {\it not}  preserve neither smoothness nor even continuity of a function.  Discontinuities   are a fact of life in this setting, and they do not go away as in certain dissipative PDEs, since the positive $1$-eigenvectors of this operator, of utmost \change{repeated word "importance" erased}  importance in its  study, often have discontinuities themselves. The works of  Lasota and Yorke \cite{ly}  and Hofbauer and  Keller  \cite{hk3} are  landmark results in this direction. See  also Baladi \cite{bb} and Broise \cite{broise} for more details.   Atomic decomposition with Souza's atoms, being the simplest possible kind of atom with discontinuities, might come in handy  in these cases. That was  a major motivation for this work. \\

Besides this fact, in  an {\it abstract} homogeneous space higher-order \change{we replaced "higher order" by "higher-order"} smoothness does not seem to be a very useful concept \change{comma removed}  since  we can define $B^s_{p,q}$ just for small values of $s$, so atomic decompositions by  Souza's atoms sounds far more attractive.  \\

Indeed, the simplicity of Souza's atoms allows \change{we replaced "allow" by "allows"}  us to  throw away the necessity of a metric/topological structure on the phase space.  We define  {\bf Besov spaces} on every  measure space with a non-atomic \change{we replaced "non atomic" by "non-atomic"}  finite measure, provided we endowed it  with a {\it good grid}. A {\bf good grid}  is just a sequence of finite partitions of measurable sets satisfying certain mild properties.   We give the  definition of Besov space on measure space with a (good) grid in Part II. \\

In \change{we replaced "On" by "In"} the literature we usually see a known space of functions be described using atomic decomposition. This typically comes after a careful study of such space, and it is often a challenging  task. More rare is the path we follow here. We {\it start} by {\it defining }  the Besov spaces {\it through}  atomic decomposition by  Souza's atoms. This construction of the spaces and the study of its basic properties, as its completeness and \change{ we removed the extra "its"}  compact inclusion in Lebesgue spaces, uses fairly general and simple arguments, and it does not depend on the particular nature of the atoms used, except for very mild conditions on its regularity. In Part I we describe this construction in full generality. 
\vspace{5mm}

\noindent {\it We construct Besov-ish   spaces. There are far more general than Besov spaces.  In particular, \change{we added a comma}  the nature of the atoms may change with its location and scale in the phase space and the grid itself can be  very irregular.} 
\vspace{5mm}

The main result in Part I is Proposition \ref{trans}. Due to \change{we added "to"}  the generality of the setting, its  statement  is \change{ we replaced "hopeless" by " hopelessly"} hopelessly technical in nature, however this very powerful result has a simple meaning. Suppose we  have an atomic decomposition of a function. If we replace each of those atoms by a combination of atoms (possibly of a different class) in such way that we \change{we replaced "in such way we" by "in such way that we"} do not change the location of the support and also  the "mass" of the representation is concentrated \change{we replaced "concentrate" by "concentrated"} pretty much on the same original scale, then we obtain a new atomic decomposition, and the cost of this atomic decomposition can be estimated \change{we replaced "estimate" by "estimated"}  by the cost of the original atomic decomposition. We will use this result many times throughout \change{we replaced "along" by "throughout"}   this work. This is obviously  connected with  {\it almost diagonal operators}  as in   Frazier and Jawerth \cite{fj}\cite{fj2}.\\

In Part II we also offer a detailed analysis of the Besov spaces defined there. Since we define \change{we replaced "defined" by "define"}  it using combinations of Souza's atoms, it is not clear {\it a priori} how rich are those spaces. So 
\vspace{5mm}

\noindent {\it  We give a bunch of alternative characterisations of these Besov spaces.  We  show that using more  flexible classes of atoms (piecewise H\"older atoms, $p$-bounded variation atoms and even Besov atoms with higher regularity), we obtain the same Besov space. This often allows us to easily verify if  a given function belongs to $B^s_{p,q}$. We also have  a mean oscillation characterisation in the spirit of Dorronsoro \cite{mo} and Gu and Taibleson  \cite{martingale}, and we also obtained  another one using  Haar wavelets.}
\vspace{5mm}

  Haar wavelets were introduced by Haar  \cite{haar}  in the  real line.  The elegant construction of unbalanced Haar  wavelets in general measure spaces with a grid by  Girardi and Sweldens \cite{gw}  will play an essential role here. If in general homogeneous spaces the Calder\'on reproducing formula appears to be the bit of  harmonic analysis that survives in it and it allows  to carry out the theory, in finite measure spaces with a good grid (and in particular {\it compact } homogenous spaces) full-blown Haar systems are  alive and well. Recently  a Haar system was used by Kairema, Li, Pereyra and Ward \cite{ka} to study dyadic versions of the Hardy and BMO spaces in homogeneous spaces.  \\
 
 We also provided a few applications in part III. In particular, \change{we added a comma}  we study the boundedness of pointwise multipliers acting in the Besov space. Since it is effortless \change{we replaced "very easy" by "effortless"} to multiply Souza's atoms, the proofs of these results are concise \change{we replaced "very short" by "concise"} and easy to understand, generalising some of the results for Besov spaces in $\mathbb{R}^n$ by  Triebel\cite{multi} and Sickel \cite{sickel}. We also study the boundedness of left composition in  Besov spaces of measure spaces with a grid, similar to some results for $B^s_{p,q}(\mathbb{R}^n)$ in  Bourdaud and Kateb \cite{k1} (see also Bourdaud and Kateb \cite{k0}\cite{kateb1}).\\

It may come as a surprise to the reader  that Besov spaces on  compact homogeneous spaces as defined by  Han,  Lu and Yang  \cite{han2} (and in particular  Gu-Taibleson recalibrated martingale Besov spaces \cite{martingale})  are indeed {\it particular cases} of Besov spaces defined here, provided $0< s< 1/p$ and $s$  is small.  We show this in a forthcoming work \cite{smania-homo}.
\vspace{5mm}

\section{Notation} We will use $C_1, C_2, \dots...$ for positive constants and  $\lambda_1, \lambda_2, \dots$ for positive constants smaller than one.

\begin{table}[h]
  \centering
  \caption{Most important notation/symbols/constants}
  \label{tab:table1}
  \begin{tabular}{cc}
    \toprule
    Symbol & Description\\
    \midrule
    $I$ & phase space\\
     $m$ & finite measure in  the phase space $I$\\
     $a_P, b_P$ & an atom supported on P\\
     $\mathcal{A}$ & a class of atoms \\
     $\mathcal{A}(Q)$ & set of atoms of class $\mathcal{A}$ supported on $Q$ \\
     $\mathcal{A}^{sz}_{s,p}$& class of $(s,p)$-Souza's atoms \\
     $\mathcal{A}^{h}_{s,\beta,p}$& class of $(s,\beta,p)$-H\"older atoms \\
     $\mathcal{A}^{bv}_{s, \beta,p}$& class of $(s,\beta,p)$-bounded variation atoms \\
     $\mathcal{A}^{bs}_{s,\beta,p,q} $ & class of $(s,\beta,p,q)$-Besov's atoms \\
     $\mathcal{P}$ & grid of  $I$\\
    $\mathcal{P}^k$ & family  subsets of $I$ at the $k$-th level of $\mathcal{P}$\\
    $\Crr{menor} \leq \Crr{maior}$ & describes geometry of the grid $\mathcal{P}$\\
    $\mathcal{B}^s_{p,q}(\mathcal{A})$ & (s,p,q)-Banach space defined by the class of atoms $\mathcal{A}$\\
    $\mathcal{B}^s_{p,q}$ or  $\mathcal{B}^s_{p,q}(\mathcal{A}^{sz}_{s,p})$ & (s,p,q)-Banach space defined by Souza's atoms\\
    $P, Q, W$ & elements of the grid $\mathcal{P}$\\
    $L^p$ or $L^p(m)$ & Lesbesgue spaces of  $(I,\mathbb{A}, m)$ \\
    $|\cdot|_p$ & norm in $L^p$, $p \in (0,\infty].$ \\
    $p'$ & $1/p+ 1/p'=1$, with $p\in [1,\infty]$.  \\
    $\rho$ & $\min\{1,p,q\}$.\\
    $\hat{t}$  & $\max\{t,1\}.$\\
    \bottomrule
  \end{tabular}
\end{table}

\vspace{1cm}

\newpage 

\centerline{ \bf I. DIVIDE AND RULE.}
\addcontentsline{toc}{chapter}{\bf I. DIVIDE AND RULE.}
\vspace{1cm}

\fcolorbox{black}{white}{
\begin{minipage}{\textwidth}
\noindent   In Part I. we are going to assume  $s > 0$, $p \in (0,\infty)$ and $q\in (0,\infty].$ \end{minipage} 
}
\ \\ \\

\section{Measure spaces and grids}\label{partition}   Let $I$ be a measure space with a $\sigma$-algebra $\mathbb{A}$ and $m$ be a  measure on $(I,\mathbb{A})$, $m(I)<  \infty$. Given a measurable set $J\subset  I$ denote $|J|=m(J)$.  We denote the Lebesgue spaes of $(I,\mathbb{A},m)$ by $L^p$. A {\bf grid}  is a sequence of finite  families of measurable sets with positive measure  $\mathcal{P}= (\mathcal{P}^k)_{k\in \mathbb{N}}$, so that at least one of these families is non empty and
\begin{itemize}
\item[${\Cll[G]{fin}}$.] Given $Q\in \mathcal{P}^k$, let
$$\Omega_Q^k=\{ P \in \mathcal{P}^k\colon \ P\cap Q\neq \emptyset \}.$$
Then
$$\Cll{mult1}=\sup_k \sup_{Q \in \mathcal{P}^k} \# \Omega_Q^k< \infty.$$
\end{itemize}

  Define  $|\mathcal{P}^k|=\sup \{|Q|\colon Q \in \mathcal{P}^k\}$.  To simplify the notation we also assume that $P\neq Q$ for every $P\in \mathcal{P}^i$ and $Q\in \mathcal{P}^j$ satisfying $i\neq j$. We often abuse notation using $\mathcal{P}$ for both   $(\mathcal{P}^k)_{k\in \mathbb{N}}$ and $\cup_k \mathcal{P}^k$. 
 \begin{remark} 
  There are plenty of examples of spaces with  grids. Perhaps the simplest one is obtained considering  $[0,1)$ with the Lebesgue measure and the {\bf dyadic grid} $\mathcal{D}=(\mathcal{D}^k)_k$ given by
  $$\mathcal{D}^k=\{[i/2^k,(i+1)/2^k), \ 0\leq i < 2^k   \}.$$
  We can also consider the dyadic grid $\mathcal{D}_n=(\mathcal{D}^k_n)_k$ of $[0,1)^n$, endowed with the Lebesgue measure,  given by
   $$\mathcal{D}^k_n=\{J_1\times\cdots \times J_n, \ with \ J_i\in \mathcal{D}^k \}.$$
  and also the corresponding $d$-adic grids replacing $2^k$ by $d^k$ in the above definitions. The above grids are somehow special since they are nested sequence of partitions of the phase space $I$ and all elements on the same level have \change{we replaced "has" by "have"} exactly the same measure. 
  \end{remark}
  
    \begin{remark} Indeed, any  measure space with a finite non-atomic measure can be endowed with a grid  made of a nested sequence of partitions and such that all elements on the same level have \change{we replaced "has" by "have"}  \change{we replaced "exactly" by "precisely"} precisely the same measure \change{we removed a comma}  since any such measure space is measure-theoretically the same that a finite interval with the Lebesgue measure.
  \end{remark}
  
   \begin{remark} If we  consider a (quasi)-metric space $I$ with a finite measure $m$, we would like to construct "nice" grids on $(I,m)$. It turns out that if $(I,m)$ is a homogeneous space one can construct a nested sequence of partitions of the phase space $I$ and all elements on the same level are open subsets and have "essentially" the same measure. This is an easy consequence of   a remarkable \change{we replaced 'famous"by "remarkable"} result by Christ \cite{christ}. See \cite{smania-homo}.
   \end{remark}
  
  \begin{remark} One can constructs grids for smooth compact manifolds and bounded polyhedral domains in $\mathbb{R}^n$ using successive subdivisions of an initial triangulation of the domain (see for instance Oswald \cite{oswald0}\cite{oswald} ).
  \end{remark}

%\subsection{Regular family of domains} \label{fd} 
%Let    $\{I_i\}_{i\in \Lambda}$ be a countable  family  of subsets of $I$  such that there exists a bounded operator
%$$\Psi\colon \mathcal{B}^s_{p,q}\rightarrow \mathcal{B}^s_{p,q},$$ and constants $\Cll{regul2}, \Cll{regul}\geq  0$, $\Cll[c]{min} \in (0,1)$ and  $\gamma \in [0,1]$ such that the following holds.  We have 
%$$|\Psi|_{\mathcal{B}^s_{p,q}}\leq \Crr{regul2},$$
%and for every $f\in \mathcal{B}^s_{p,q}$ and $i \in \Lambda$  there is a $\mathcal{B}^s_{p,q}$-representation 
%\begin{equation}\label{posi} (f-\Psi(f))\cdot 1_{I_i}= \sum_{Q\in \mathcal{P}, Q \subset I_i }  c_Q^i   a_Q,\end{equation}
%where $a_Q$ is the canonical Souza's atom on $Q$  satisfying 
%$$ \Big( \sum_j\big( \sum_i \Crr{min}^{a_ip(1-\gamma)/2}   \sum_{\substack{ Q\in \mathcal{P}^j \\  Q \subset I_i }}  |c_Q^i|^p  \big)^{q/p}\Big)^{1/q} \leq \Crr{regul} ||f||_{\mathcal{B}^s_{p,q}}. $$
% Also assume that 
%\begin{equation} \sum_r   \Crr{min}^{p'\gamma a_r/2} < \infty.\end{equation}
%Here $1/p+1/p' =1$.  

\section{A bag  of tricks.}

 Following closely the notation of Triebel \cite{fractal}, consider the set $\ell_q(\ell_p^{\mathcal{P}})$ of all indexed sequences $$x=(x_{P})_{P\in \mathcal{P}},$$ with $x_{P}\in \mathbb{C}$, satisfying  
$$|x|_{\ell_q(\ell_p^{\mathcal{P}})}= \Big( \sum_k \big(\sum_{P\in \mathcal{P}^k} |x_{P}|^p \big)^{q/p} \Big)^{1/q} < \infty,$$
with the usual modification when  $q=\infty$. Then  $(\ell_q(\ell_p^{\mathcal{P}}),|\cdot|_{\ell_q(\ell_p^{\mathcal{P}})})$ is a complex $\rho$-Banach space  with $\rho= \min \{1,p,q\}$, that is,  $d(x,y)=|x-y|_{\ell_q(\ell_p^{\mathcal{P}})}^\rho$ is a complete metric in $\ell_q(\ell_p^{\mathcal{P}}).$

The following is  a pair of arguments we will use across \change{we replaced "along" by "across"} this paper to estimate norms in $\ell_p$ and $\ell_q(\ell_p^{\mathcal{P}})$. Those are very elementary, and we do not claim any originality. We collect them here to simplify the exposition. The reader can skip this for the cases  $p, q \geq 1$, when the  results bellow reduce  to the familiar H\"older's and Young's inequalities. Their proofs  were mostly based  on  \cite[Proof of Theorem 3.1]{fj}. Recall that for $t \in (0,\infty]$ we defined $\hat{t}=\max \{ 1, t\}$.
%\begin{proposition} if $(x_i)_ i\in \ell_t$, with $|x_i|\leq 1$ for every $i$, and $p\leq t$ then
%$$(\sum_i  |x_i|^t)^{1/\hat{t}}\leq (\sum_i  |x_i|^p)^{1/\hat{p}}.$$
%\end{proposition} 

\begin{proposition}[H\"older-like trick] \label{holder}Let $t\in (0, \infty)$ and $q\in (0,\infty]$. Let $a=(a_k)_k, b=(b_k)_k,c=(c_k)_k$ nonnegative   sequences  such that for every $k$
$$a_k^{1/\hat{t}} \leq C^{1/\hat{t}} b_k^{1/\hat{t}}c_k^{1/\hat{t}}.$$
Then if $q< \infty$ we have
$$(\sum_k a_k^{1/\hat{t}})^{\hat{t}/t} \leq  C^{1/t} \Cll{co}(t,q,b)\Big(  \sum_k c_k^{q/t}\Big)^{1/q},$$
and if  $q=\infty$ 
$$(\sum_k a_k^{1/\hat{t}})^{\hat{t}/t} \leq  C^{1/t} \Crr{co}(t,q,b)\sup_k c_k^{1/t}.$$
where  
\begin{itemize} 
\item[A.] If $t\geq 1$ and $q\geq 1$ then $\Crr{co}(t,q,b)=(\sum_k b_k^{q'/t})^{1/q'} $ if  $q > 1$, \\ and  $\Crr{co}(t,1,b)=\sup_k b_k^{1/t}$  if  $q=1$.
\item[B.] If $t\geq 1$ and $q\leq  1$ then $\Crr{co}(t,q,b)=\sup_k b_k^{1/t}$.
\item[C.] If $t< 1$ and $q/t\geq 1$ then $\Crr{co}(t,q,b)=(\sum_k b_k^{(q/t)'})^{1/(t(q/t)')} $ if $q< t$, \\ and 
$\Crr{co}(t,q,b)=\sup_k b_k^{1/t}$ if $q=t$. 
\item[D.] If $t< 1$ and $q/t<   1$ then $\Crr{co}(t,q,b)= \sup_k b_k^{1/t} $.
\end{itemize}

\end{proposition}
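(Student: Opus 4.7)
The plan is to first observe that the hypothesis $a_k^{1/\hat t} \leq C^{1/\hat t} b_k^{1/\hat t} c_k^{1/\hat t}$ is equivalent to the pointwise bound $a_k \leq C b_k c_k$, and then to treat the four cases A--D in turn. They split naturally by whether $\hat t = t$ (cases A, B, i.e.\ $t\geq 1$) or $\hat t = 1$ (cases C, D, i.e.\ $t<1$), and further by whether the $q$-exponent allows a genuine H\"older pairing (cases A, C) or forces an $\ell^\infty$ pairing (cases B, D).

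In cases A and B the target is $\sum_k a_k^{1/t}$ on the left. For case A ($q\geq 1$) I apply the ordinary H\"older inequality to the product $b_k^{1/t} c_k^{1/t}$ with dual exponents $q',q$, which delivers exactly the claimed bound; the boundary $q=1$ simply reads as the $\ell^\infty\!\cdot\ell^1$ pairing that produces $\sup_k b_k^{1/t}$. For case B ($q\leq 1$) I pull $\sup_k b_k^{1/t}$ out of the sum and am left with $\sum_k c_k^{1/t}$; since $q\leq 1$, the reverse sequence inclusion $\|d\|_1\leq \|d\|_q$ (valid for nonnegative sequences when $q\leq 1$) upgrades this to $(\sum_k c_k^{q/t})^{1/q}$ without loss of constant. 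In cases C and D the target is $(\sum_k a_k)^{1/t}$, so one estimates $\sum_k a_k$ first and raises to $1/t$ at the end. For case C ($q/t\geq 1$) I apply H\"older to $\sum_k b_k c_k$ with dual exponents $(q/t)'$ and $q/t$, obtaining $\sum_k a_k \leq C \,(\sum_k b_k^{(q/t)'})^{1/(q/t)'}(\sum_k c_k^{q/t})^{t/q}$, and raising to $1/t$ matches the advertised constant $(\sum_k b_k^{(q/t)'})^{1/(t(q/t)')}$; the boundary $q=t$ gives $(q/t)'=\infty$ and the $\sup$ form. For case D ($q/t<1$) I pull out $\sup_k b_k$, reduce to $\sum_k c_k$, and apply the analogous reverse inclusion $\ell^{q/t}\hookrightarrow \ell^1$ to get $\sum_k c_k\leq (\sum_k c_k^{q/t})^{t/q}$, then raise to $1/t$.

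I do not expect any real obstacle beyond bookkeeping. The only delicate point worth flagging is that on sequence spaces raising the exponent \emph{decreases} the norm, so the correct direction is $\ell^q\hookrightarrow \ell^1$ when $q\leq 1$, and it is precisely this reverse embedding that makes cases B and D go through. The cases $q=\infty$ throughout are handled by replacing sums with suprema in the obvious way, and the boundary values $q=1$, $q=t$ are uniformly covered by letting the H\"older dual exponent degenerate to $\infty$.
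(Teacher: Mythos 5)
Your proposal is correct and follows essentially the same route as the paper: the identical four-case split, the ordinary H\"older inequality with exponents $(q,q')$ resp.\ $(q/t,(q/t)')$ in cases A and C, and in cases B and D the reverse embedding $\ell^{q}\hookrightarrow\ell^{1}$ (equivalently the subadditivity of $x\mapsto x^{q}$ for $q\leq 1$), which is exactly the ``triangular inequality for $|\cdot|^{q}$'' the paper invokes, merely applied after pulling out the supremum rather than before. No gap; the constants match in every case.
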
  
\begin{proof}   We have 

\noindent {\it Case A.}  If $t\geq 1$ and $q\geq 1$, by the H\"older inequality for the pair $(q,q')$
$$\sum_{k} a_k^{1/\hat{t}}=\sum_k a_k^{1/t} \leq C^{1/t} (\sum_k b_k^{q'/t})^{1/q'}    \big(  \sum_k c_k^{q/t}\big)^{1/q}.$$

\noindent {\it Case B.} If $t\geq 1$ and $q\leq 1$ then   the triangular inequality for $|\cdot|^q$ implies
\begin{eqnarray*}
(\sum_{k} a_k^{1/\hat{t}})^q&=&(\sum_k a_k^{1/t})^q \leq C^{q/t} \big(   \sum_k b_k^{1/t}c_k^{1/t}\big)^q \\
&\leq& C^{q/t}   \sum_k \big(  b_k^{1/t}c_k^{1/t}\big)^q  \\
&\leq& C^{q/t}   (\sup_k b_k^{q/t})\Big( \sum_kc_k^{q/t}\Big)
\end{eqnarray*}

\noindent {\it Case C.} If $t< 1$ and $q/t \geq 1$ then $\hat{t}/t=1/t$ and by the  H\"older inequality for the pair $(q/t,(q/t)')$
$$\sum_{k} a_k^{1/\hat{t}}\leq C (\sum_k b_k^{(q/t)'})^{1/(q/t)'}    \big(  \sum_k c_k^{q/t}\big)^{t/q},$$
%%so
%$$(\sum_{k} a_k^{1/\hat{t}})^{\hat{t}/t} \leq C^{1/t}  (\sum_k |b_k|^{(q/t)'})^{1/(t(q/t)')}    \big(  \sum_k c_k^{q/t}\big)^{1/q}.$$ 

\noindent {\it Case D.} if $t<  1$ and $q/t<  1$ then  using the triangular inequality for $|\cdot|^{q/t}$
\begin{align*} (\sum_k a_k^{1/\hat{t}})^{q/t} &\leq C^{q/t} \big(\sum_k  b_k c_k\big)^{q/t}\\
&\leq C^{q/t} \sum_k  \big(b_k c_k\big)^{q/t}\\
&\leq C^{q/t} (\sup_k b_k^{q/t}) \Big(  \sum_k c_k^{q/t}\Big). \end{align*} 
\end{proof}

\begin{proposition}[Convolution  trick]\label{young} Let $p,q\in (0, \infty)$. Let $a=(a_k)_{k\in \mathbb{Z}}, b=(b_k)_{k\in \mathbb{Z}},c=(c_k)_{k\in \mathbb{Z}}\geq 0$ be such that for every $k$
$$a_k^{1/\hat{p}} \leq C^{1/\hat{p}} \sum_{i\in \mathbb{Z}} b_{k-i}^{1/\hat{p}}c_i^{1/\hat{p}}.$$
Then
$$(\sum_k a_k^{q/p})^{1/q} \leq  C^{1/p}    \Crr{co2}(p,q,b) \Big(  \sum_k c_k^{q/p}\Big)^{1/q},$$
where $\Cll{co2}\geq 1$ satisfies 
\begin{itemize} 
\item[A.] If $p\geq 1$ and $q\geq 1$ then $\Crr{co2}(p,q,b)= \sum_{k\in \mathbb{Z}} b_k^{1/p}$. 
\item[B.] If $p\geq 1$ and $q\leq  1$ then $\Crr{co2}(p,q,b)=(\sum_{k\in \mathbb{Z}} b_k^{q/p})^{1/q}$.
\item[C.] If $p< 1$ and $q/p\geq 1$ then $\Crr{co2}(p,q,b)=(\sum_{k\in \mathbb{Z}} b_k)^{1/p}$.
\item[D.] If $p< 1$ and $q/p<   1$ then $\Crr{co2}(p,q,b)=(\sum_{k\in \mathbb{Z}} b_k^{q/p})^{1/q}$
\end{itemize}

\end{proposition}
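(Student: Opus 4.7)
My plan is to split into the four cases exactly as in Proposition~\ref{holder}, noting that in Cases A and B we have $\hat p = p$ (so the hypothesis is a genuine convolution estimate on $a_k^{1/p}$), while in Cases C and D we have $\hat p = 1$ (so the hypothesis is a convolution estimate directly on $a_k$). In each case the tools are just the classical Young inequality for $\ell^p$-convolutions together with the elementary $r$-triangle inequality $(\sum x_j)^r \leq \sum x_j^r$ valid for $r \in (0,1]$ and $x_j \geq 0$.

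In Case A ($p\geq 1$, $q\geq 1$) set $\tilde a_k = a_k^{1/p}$, $\tilde b_k = b_k^{1/p}$, $\tilde c_k = c_k^{1/p}$, so that $\tilde a \leq C^{1/p}\,\tilde b \ast \tilde c$ pointwise; Young's inequality in the exponent triple $(1,q,q)$ yields $|\tilde a|_q \leq C^{1/p}\,|\tilde b|_1\,|\tilde c|_q$, which is the claimed bound. In Case C ($p<1$, $q/p \geq 1$) the hypothesis reads $a \leq C\, b\ast c$, and Young with triple $(1,q/p,q/p)$ gives $|a|_{q/p}\leq C|b|_1|c|_{q/p}$; raising to the $1/p$ power gives exactly $\Crr{co2}(p,q,b)=(\sum_k b_k)^{1/p}$.

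The two remaining cases are handled by the $r$-triangle inequality instead of Young. In Case B ($p \geq 1$, $q\leq 1$), raise the convolution bound on $\tilde a_k$ to the $q$-th power, use $q\leq 1$ to interchange the power with the sum, and then sum over $k$ using Fubini to factorize into $(\sum_j \tilde b_j^q)(\sum_i \tilde c_i^q)$; taking $q$-th roots and re-expressing in terms of $a,b,c$ yields the claim. In Case D ($p<1$, $q/p < 1$), raise $a_k \leq C \sum_i b_{k-i}c_i$ to the $q/p$-th power, apply the $(q/p)$-triangle inequality to pull the exponent inside the sum, and then use Fubini exactly as in Case B.

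No step is hard; the only thing to be careful about is matching exponents to the Young triple correctly in Cases A and C and keeping track of when $\hat p$ is $p$ versus $1$. I expect the full proof to be only a few lines per case, essentially mirroring the structure of Proposition~\ref{holder} but with convolution replacing the pointwise product.
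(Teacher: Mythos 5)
Your proposal is correct and follows essentially the same route as the paper: Young's inequality for the pairs $(1,q)$ and $(1,q/p)$ in Cases A and C, and the $r$-triangle inequality combined with Fubini (i.e.\ Young for the pair $(1,1)$) in Cases B and D, applied to $a_k^{1/\hat p}$ with the correct reading of $\hat p$ in each regime. Nothing further is needed.
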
  
\begin{proof} We have

\noindent {\it Case A.}  If $p\geq 1$ and $q\geq 1$, by the Young's inequality for the pair $(1,q)$ 
$$(\sum_k a_k^{q/p})^{1/q} \leq  C^{1/p}    \big( \sum_k b_k^{1/p}\big) \Big(  \sum_k c_k^{q/p}\Big)^{1/q}.$$

\noindent {\it Case B.} If $p\geq 1$ and $q\leq 1$ then   the triangular inequality for $|\cdot|^q$ and the Young's inequality for the pair $(1,1)$ imply 
\begin{eqnarray*}
\sum_{k} a_k^{q/p}&=& C^{q/p} \sum_k \big(  \sum_{i\in \mathbb{Z}} b_{k-i}^{1/p}c_i^{1/p} \big)^q \\
&\leq&C^{q/p}   \sum_k \sum_{i\in \mathbb{Z}} b_{k-i}^{q/p}c_i^{q/\hat{p}}  \\
&\leq& C^{q/p}   (\sum_k b_k^{q/p})\Big( \sum_k c_k^{q/p}\Big)
\end{eqnarray*}

\noindent {\it Case C.} If $p< 1$ and $q/p \geq 1$ then by the  Young's  inequality for the pair $(1, q/p)$
$$(\sum_{k} a_k^{q/p})^{p/q} \leq C (\sum_k b_k)    \big(  \sum_k c_k^{q/p}\big)^{p/q}.$$

\noindent {\it Case D.} If $p<  1$ and $q/p<  1$ then  using the triangular inequality for $|\cdot|^{q/p}$ and the Young's inequality for the pair $(1,1)$ 
\begin{eqnarray*} \sum_k a_k^{q/p}&\leq& C^{q/p} \sum_k \big(\sum_{i\in \mathbb{Z}}  b_{k-i} c_i\big)^{q/p}\\
&\leq& C^{q/p} \sum_k  \sum_{i\in \mathbb{Z}} b_{k-i}^{q/p}  c_i^{q/p}\\
&\leq& C^{q/p} (\sum_k b_k^{q/p}) \Big(  \sum_k c_k^{q/p}\Big).
\end{eqnarray*} 
\end{proof}

\section{Atoms}

 Let $\mathcal{P}$ be a grid.  Let $p \in [1,\infty), u \in [1,\infty]$,  and $s> 0$. A family of  {\bf atoms } associated with $\mathcal{P}$ of type $(s,p,u)$ is an indexed family $\mathcal{A}$ of pairs   $(\mathcal{B}(Q),\mathcal{A}(Q))_{Q\in \cup_k \mathcal{P}^k}$ where  \\
\begin{itemize}
\item[${\Cll[A]{banach}}$.] $\mathcal{B}(Q)$ is a complex Banach space contained in $L^{pu}$.
\item[${\Cll[A]{suporte}}$.] If $\phi \in \mathcal{B}(Q)$ then $\phi(x)=0$ for every $x \not\in Q$.
\item[${\Cll[A]{convex}}$.]  $\mathcal{A}(Q)$ is a convex subset of $\mathcal{B}(Q)$ such that $\phi \in \mathcal{A}(Q)$ if and only if $\sigma~\phi~\in~\mathcal{A}(Q)$ for every $\sigma \in \mathbb{C}$ satisfying $|\sigma|=1$.
%\item[${\Crr{atom}}$.] The set $\mathcal{A}(Q)$ is compact in $L^p(m)$.
\item[${\Cll[A]{atom}}$.] We have $$|\phi|_{pu} \leq |Q|^{s-\frac{1}{u'p}}$$ for every $\phi \in \mathcal{A}(Q)$. 
\end{itemize}

 We will say that $\phi \in \mathcal{A}(Q)$ is an $\mathcal{A}$-atom of type $(s,p,u)$ supported on $Q$ and that $\mathcal{B}(Q)$ is  the local Banach space on $Q$. Sometimes we also assume \\
\begin{itemize}
\item[${\Cll[A]{compact}}$.]  For every $Q\in \mathcal{P}$ we have that $\mathcal{A}(Q)$ is a compact subset in the strong topology of    $L^p$.\\
%and
%$$\Cll{mult2}=\sup_k \sup_{Q \in \mathcal{P}^k} \sup_{P\in \Omega_Q} \frac{|P|}{|Q|}< \infty.$$
\end{itemize}
or
\begin{itemize}
\item[${\Cll[A]{compactw}}$.] We have $p\in [1,\infty)$ and  every $Q\in \mathcal{P}$ the set  $\mathcal{A}(Q)$ is a sequentially compact subset in the {\it weak } topology   of $L^p$.\\
\end{itemize}
or even \\
\begin{itemize}
\item[${\Cll[A]{finite}}$.]  For every $Q\in \mathcal{P}$ we have that $\mathcal{B}(Q)$ is finite dimensional and  $\mathcal{A}(Q)$ contains a neighborhood of $0$ in $\mathcal{B}(Q)$.\\
%and
%$$\Cll{mult2}=\sup_k \sup_{Q \in \mathcal{P}^k} \sup_{P\in \Omega_Q} \frac{|P|}{|Q|}< \infty.$$
\end{itemize}

 We provide examples of classes of atoms in Section \ref{secatom}.

\section{Besov-ish spaces} 

 Let $p \in (0,\infty)$, $u \in [1,\infty]$, $q \in (0,\infty]$, $s> 0$,  $\mathcal{P}=(\mathcal{P}^k)_{k\geq 0}$ be a grid  and let $\mathcal{A}$ be a family of atoms of type $(s,p,u)$.  We will also assume that  
\begin{itemize}
\item[${\Cll[G]{sum}}.$] We have
$$\Cll{decaypart} =    \Crr{co}(p,q, (|\mathcal{P}^k|^s)_k) < \infty.$$
and
$$\lim_k |\mathcal{P}^k|=0.$$
%   (\Big( \sum_k |\mathcal{P}^k|^{sq'}    \Big)^{1/q'} 
\end{itemize}
The {\bf Besov-ish space} $\mathcal{B}^s_{p,q}(I,\mathcal{P}, \mathcal{A})$  is  the set of all complex valued functions $g \in L^p$ that  can  be represented by an absolutely convergent series on $L^p$
\begin{equation} \label{rep} g = \sum_{k=0}^{\infty} \sum_{Q \in \mathcal{P}^k}    s_Q a_Q\end{equation}
where $a_Q$ is in $\mathcal{A}(Q)$, $s_Q \in \mathbb{C}$  and with finite {\bf cost }
\begin{equation} \label{rep2} \big( \sum_{k=0}^{\infty} (\sum_{Q \in \mathcal{P}^k}    |s_Q|^p)^{q/p} \big)^{1/q} < \infty.\end{equation}
Note that the inner sum in (\ref{rep}) is finite. By absolutely convergence in $L^p$ we mean that 
$$\sum_{k=0}^{\infty} \big| \sum_{Q \in \mathcal{P}^k}    s_Q a_Q  \big|_p^{p/\hat{p}} < \infty.$$ 
The series in (\ref{rep}) is called a {\bf $\mathcal{B}^s_{p,q}(I,\mathcal{P}, \mathcal{A})$-representation} of the function $g$. 
Define
$$|g|_{\mathcal{B}^s_{p,q}(I,\mathcal{P}, \mathcal{A})} = \inf  \big( \sum_{k=0}^{\infty} (\sum_{Q \in \mathcal{P}^k}     |s_Q|^p )^{q/p} \big)^{1/q} ,$$
where the infimum runs over all possible representations of $g$ as in (\ref{rep}).  

Quite often along this work, when it is obvious the choice of the measure space $I$ and/or the grid $\mathcal{P}$  we will write either $\mathcal{B}^s_{p,q}(\mathcal{P}, \mathcal{A})$ or even $\mathcal{B}^s_{p,q}( \mathcal{A})$ instead of $\mathcal{B}^s_{p,q}(I,\mathcal{P}, \mathcal{A})$.  Whenever we write just $\mathcal{B}^s_{p,q}$ it means that we choose the Souza's atoms $\mathcal{A}^{sz}_{s,p}$, with parameters $s$ and $p$,  a class of atoms  we  properly define in Section \ref{souzaa}. 

 %We will use expressions of the form
%\begin{equation}\label{llp}  \sum_{k}  \sum_{P\in \mathcal{P}^k}  s_{P}a_P\end{equation}
%where for each $k$ we have that $\mathcal{P}^k$ is a {\it finite} index set, $s_P\in \mathbb{C}$ and $a_p$ belongs to the  Lebesgue space $L^p$, $p\in (0,\infty]$.  Let  $\hat{p}=\max \{ 1, p\}$. When we say "the series (\ref{llp}) converges absolutely in $L^p$" we mean that 
%$$ \sum_{k=0}^\infty  |\sum_{P\in \mathcal{P}^k}  s_{P}a_P|_p^{p/\hat{p}} < \infty.$$

%We will  often abuse notation replacing $Q\in \cup_k\mathcal{P}^k$ by $Q\in \mathcal{P}$. To avoid a cumbersome notation sometimes we will write
%$$  \sum_{P\in \mathcal{P}}  s_{P}a_P$$
%to represent (\ref{llp}). 

\begin{proposition}\label{lp}   Assume ${\Crr{fin}}$-${\Crr{sum}}$ and ${\Crr{banach}}$-${\Crr{atom}}$. Let $t \in (0,\infty)$ be such that 
\begin{equation}\label{c11} s-\frac{1}{p}+\frac{1}{t} \geq 0, \ p \leq t \leq pu\end{equation} 
and suppose
\begin{equation}\label{decay}  \Cll{kt}=\Crr{mult1}^{1+1/t}  \Crr{co}(t,q, ( |\mathcal{P}^k|^{t(s-\frac{1}{p} +\frac{1}{t})} )_k ) < \infty,\end{equation} 
%\begin{equation}\label{decay}  \Cll{kt}=\Crr{mult1}^{1+1/t}  \Big( \sum_k |\mathcal{P}^k|^{(s-\frac{1}{p} +\frac{1}{t})q'}    \Big)^{1/q'} < \infty,\end{equation} 
Then for every coefficients $s_Q$ satisfying (\ref{rep2}) and every $\mathcal{A}$-atoms $a_Q$  on $Q$ the series (\ref{rep}) converges absolutely on $L^t$. Indeed 
\begin{align}\label{flp}
|\sum_{Q \in \mathcal{P}^k}   s_Q a_Q|_t&\leq  \Crr{mult1}^{1+1/t}   |\mathcal{P}^k|^{s-\frac{1}{p} +\frac{1}{t}} \Big(  \sum_{Q \in \mathcal{P}^k} |s_Q|^p   \Big)^{1/p}
\end{align} 
and
\begin{equation}\label{inclulp}  |g|_{t} \leq  \Crr{kt} | \phi|_{\mathcal{B}^s_{p,q}(\mathcal{A})}.\end{equation}
\end{proposition}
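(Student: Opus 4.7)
My approach is to first establish the single-level estimate (\ref{flp}) by combining a single-atom size bound with the bounded-overlap property $\Crr{fin}$, and then sum over $k$ by invoking the H\"older-like trick of Proposition \ref{holder}. Absolute convergence in $L^t$ will fall out of the resulting finite bound.

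For the single-atom step, $a_Q$ is supported on $Q$ and $|a_Q|_{pu}\le|Q|^{s-1/(u'p)}$ by $\Crr{atom}$; since $p\le t\le pu$, H\"older's inequality on $Q$ with exponents $(pu/t,\,pu/(pu-t))$ gives
\[
|a_Q|_t \;\le\; |Q|^{1/t-1/(pu)}\,|a_Q|_{pu} \;\le\; |Q|^{s+1/t-1/p},
\]
where I used $1/(pu)+1/(u'p)=1/p$. Hypothesis (\ref{c11}) makes this exponent non-negative, so $|Q|^{s+1/t-1/p}\le|\mathcal{P}^k|^{s+1/t-1/p}$ whenever $Q\in\mathcal{P}^k$.

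For the single-level estimate I would use the bounded overlap: by $\Crr{fin}$, for every $x\in I$ the set $N(x)=\{Q\in\mathcal{P}^k:x\in Q\}$ has at most $\Crr{mult1}$ elements, since any such $Q$ lies in $\Omega^k_{Q_0}$ for a fixed $Q_0\in N(x)$. A pointwise use of Jensen's inequality when $t\ge 1$, or of the subadditivity of $r\mapsto r^t$ when $t<1$, gives
\[
\Big|\sum_{Q\in\mathcal{P}^k} s_Q\,a_Q(x)\Big|^t \;\le\; \Crr{mult1}^{(t-1)^+}\sum_{Q\in\mathcal{P}^k}|s_Q|^t\,|a_Q(x)|^t.
\]
Integrating over $I$, substituting the single-atom bound, and then applying the elementary embedding $(\sum_Q|s_Q|^t)^{1/t}\le(\sum_Q|s_Q|^p)^{1/p}$ (valid because $p\le t$) yields (\ref{flp}), noting $(t-1)^+/t\le 1\le 1+1/t$.

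For the passage to the level sum, the $L^t$-triangle inequality gives $|g|_t\le\sum_k|\sum_Q s_Q a_Q|_t$ when $t\ge 1$, while the subadditivity of $|\cdot|_t^t$ gives $|g|_t^t\le\sum_k|\sum_Q s_Q a_Q|_t^t$ when $t<1$. Setting $A_k=|\sum_Q s_Q a_Q|_t^{\hat{t}}$ rewrites both uniformly as $|g|_t\le(\sum_k A_k)^{\hat{t}/t}$, which is precisely the output of Proposition \ref{holder}. Taking $b_k=\Crr{mult1}^{(1+1/t)t}|\mathcal{P}^k|^{t(s-1/p+1/t)}$ and $c_k=(\sum_Q|s_Q|^p)^{t/p}$, so that (\ref{flp}) is exactly the hypothesis $A_k^{1/\hat{t}}\le b_k^{1/t}c_k^{1/t}$, and using the $\alpha^{1/t}$-homogeneity of $\Crr{co}(t,q,\cdot)$ in its third argument to pull out the factor $\Crr{mult1}^{1+1/t}$, Proposition \ref{holder} reproduces exactly the constant $\Crr{kt}$ of (\ref{decay}). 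Taking the infimum over $\mathcal{B}^s_{p,q}(\mathcal{A})$-representations of $g$ finishes (\ref{inclulp}); absolute convergence of (\ref{rep}) in $L^t$ is automatic since the same estimate applied to tails controls the $\rho$-norm of the partial sums. I expect the main obstacle to be the bookkeeping around $t\ge 1$ versus $t<1$ (equivalently $\hat{t}=t$ versus $\hat{t}=1$), because the pointwise inequality, the way the level sum is recombined, and the invocation of Proposition \ref{holder} all look slightly different in the two regimes; once everything is wrapped into the unified form $|g|_t\le(\sum_k A_k)^{\hat{t}/t}$, the remainder is mechanical.
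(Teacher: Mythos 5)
Your route is essentially the paper's: the single--atom bound $|a_Q|_t\le |Q|^{s-1/p+1/t}$ via H\"older with exponents $(pu/t,\,pu/(pu-t))$, a bounded--overlap estimate at each level $k$, the embedding $\ell^p\hookrightarrow\ell^t$ for $p\le t$, and finally Proposition \ref{holder} to sum over levels. Your pointwise form of the overlap step (at most $\Crr{mult1}$ elements of $\mathcal{P}^k$ contain a given $x$, hence $|\sum_Q s_Qa_Q(x)|^t\le \Crr{mult1}^{(t-1)^+}\sum_Q|s_Q|^t|a_Q(x)|^t$) is a slightly cleaner variant of the paper's computation, which instead integrates over each $Q$ and double--counts via $\Omega_Q^k$; it even gives the marginally better constant $\Crr{mult1}^{(t-1)^+/t}\le\Crr{mult1}^{1+1/t}$ in (\ref{flp}). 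The identification of $b_k$, $c_k$, the use of the $\alpha^{1/t}$--homogeneity of $\Crr{co}(t,q,\cdot)$ to recover $\Crr{kt}$, and the reduction of absolute convergence to finiteness of the level sum are all as in the paper.

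There is, however, one exponent slip that makes a displayed step false as written. You set $A_k=|\sum_Q s_Qa_Q|_t^{\hat t}$ and claim $|g|_t\le(\sum_k A_k)^{\hat t/t}$. Write $f_k=\sum_Q s_Qa_Q$. For $t<1$ this reads $|g|_t\le(\sum_k|f_k|_t)^{1/t}$, which does not follow from subadditivity of $|\cdot|_t^t$ and is in fact false: two disjointly supported pieces with $|f_1|_t=|f_2|_t=\epsilon<1$ give $|f_1+f_2|_t=2^{1/t}\epsilon>(2\epsilon)^{1/t}$. (For $t\ge1$ the formula is also dimensionally off, reading $|g|_t\le\sum_k|f_k|_t^{t}$.) The correct unified choice is $a_k=|f_k|_t^{t}$, so that $a_k^{1/\hat t}=|f_k|_t^{t/\hat t}$ and $|g|_t\le\big(\sum_k a_k^{1/\hat t}\big)^{\hat t/t}$ is the triangle inequality when $t\ge1$ and the subadditivity of $|\cdot|_t^t$ when $t<1$; the hypothesis $a_k^{1/\hat t}\le C^{1/\hat t}b_k^{1/\hat t}c_k^{1/\hat t}$ of Proposition \ref{holder} is then exactly the $t$--th power of (\ref{flp}) with $C=\Crr{mult1}^{t+1}$, $b_k=|\mathcal{P}^k|^{t(s-1/p+1/t)}$ and $c_k=\big(\sum_Q|s_Q|^p\big)^{t/p}$, which is precisely the inequality you verify. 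So the gap is a one--line bookkeeping repair rather than a missing idea, but as stated the recombination step would not survive the $t<1$ case that the proposition covers.
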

\begin{proof}  Firstly note that if $p\leq t \leq  pu$
\begin{align*}  |a_P|^t_t &= \int |a_P(x)|^t  1_P\ dm(x)\\
&\leq  ||a_P|^t|_{\frac{pu}{t}}  |1_P|_{\frac{pu}{pu-t}} \\
&\leq  |a_P|_{pu}^t |P|^{\frac{pu-t}{pu}}\\
&\leq |P|^{ts-\frac{t}{u'p}} |P|^{\frac{pu-t}{pu}} =|P|^{t(s-\frac{1}{p} +\frac{1}{t})}.\end{align*} 
Consequently
\begin{align}
|\sum_{Q \in \mathcal{P}^k}   s_Q a_Q|_t^{t}&\leq   \int |\sum_{Q \in \mathcal{P}^k}   s_Q a_Q|^t \ dm\nonumber \\
&\leq  \sum_{Q \in \mathcal{P}^k}  \int_Q |\sum_{P \in \mathcal{P}^k}   s_P a_P|^t  \ dm \nonumber \\
&\leq   \sum_{Q \in \mathcal{P}^k}  \int_Q |\sum_{P \in \Omega_Q^k}   s_P a_P|^t \ dm \nonumber \\
&\leq  \Crr{mult1}^{t} \sum_{Q \in \mathcal{P}^k}  \int_Q  \sum_{P \in \Omega_Q^k}  |s_P a_P|^t \ dm  \nonumber \\
&\leq  \Crr{mult1}^{t}  \sum_{Q \in \mathcal{P}^k}  \sum_{P \in \Omega_Q^k}  \int   |s_P a_P|^t \ dm \nonumber \\
&\leq \Crr{mult1}^{t}\sum_{Q \in \mathcal{P}^k}  \sum_{P \in \Omega_Q^k} \int  |s_P|^t |a_P|^t_t \ dm \ \nonumber \\
&\leq \Crr{mult1}^{t}   \sum_{Q \in \mathcal{P}^k} \sum_{P \in \Omega_Q^k}   |P|^{t(s-\frac{1}{p} +\frac{1}{t})} |s_P|^t  \nonumber \\
%&\leq  \Crr{mult1} \Big(  \sum_{Q \in \mathcal{P}^k} \sum_{P \in \Omega_Q}   |P|^{t(s-\frac{1}{p} +\frac{1}{t})} |s_P|^t )  \Big)^{1/\hat{t}}\ \nonumber \\
&\leq  \Crr{mult1}^{t+1}  \sum_{P \in \mathcal{P}^k}  |P|^{t(s-\frac{1}{p} +\frac{1}{t})} |s_P|^t   \ \nonumber \\
&\leq \Crr{mult1}^{t+1}  |\mathcal{P}^k|^{t(s-\frac{1}{p} +\frac{1}{t})} \sum_{P \in \mathcal{P}^k} |s_P|^t  \ \nonumber 
%&\leq \Crr{mult1}^{\frac{t+1}{\hat{t}}} |\mathcal{P}^k|^{t(s-\frac{1}{p} +\frac{1}{t})\frac{1}{\hat{t}}} \Big(  \sum_{P \in \mathcal{P}^k} |s_P|^p   \Big)^{1/\hat{t}}\ \nonumber
\end{align} 
%Since $p\leq t$ 
%$$|\sum_{Q \in \mathcal{P}^k}   s_Q a_Q|_t\leq \Crr{mult1}^{1+1/t}  |\mathcal{P}^k|^{s-\frac{1}{p} +\frac{1}{t}}\big( \sum_{P \in \mathcal{P}^k} |s_P|^p\big)^{1/p}  $$
By Proposition \ref{holder} (H\"older-like trick) and $p\leq t$ we have 
\begin{align}
|\sum_k \sum_{Q \in \mathcal{P}^k}   s_Q a_Q|_t &\leq   \Big( \sum_k\big( |\sum_{Q \in \mathcal{P}^k}   s_Q a_Q|_t^{t}\big)^{1/\hat{t}}   \Big)^{\hat{t}/t} \nonumber \\ 
%&\leq  \Crr{mult1}^{1+\frac{1}{t}}  \sum_k |\mathcal{P}^k|^{s-\frac{1}{p} +\frac{1}{t}} \Big(  \sum_{P \in \mathcal{P}^k} |s_P|^p \Big)^{1/p}, \nonumber  \\
&\leq  \Crr{mult1}^{1+\frac{1}{t}}  \Crr{co}(t,q, ( |\mathcal{P}^k|^{t(s-\frac{1}{p} +\frac{1}{t})} )_k ) \Big(  \sum_k \big( \sum_{P \in \mathcal{P}^k} |s_P|^t  \big)^{q/t}\Big)^{1/q}. \nonumber  \\
&\leq  \Crr{mult1}^{1+\frac{1}{t}}  \Crr{co}(t,q, ( |\mathcal{P}^k|^{t(s-\frac{1}{p} +\frac{1}{t})} )_k ) \Big(  \sum_k \big( \sum_{P \in \mathcal{P}^k} |s_P|^p \big)^{q/p}\Big)^{1/q}. \nonumber 
\end{align} 
\end{proof}
\begin{remark}\label{sharp} Note that due to \change{we added "to"}  ${\Crr{sum}}$ if  $t=p$ then  $\Crr{kt} <\infty$.  Sometimes it is convenient to use sharper estimates than (\ref{flp}) and (\ref{inclulp}) replacing  $|\mathcal{P}^k|$ by the sequence 
$$C^k =\max \{ |Q|\colon \ Q\in \mathcal{P}^k, \ s_Q\neq 0\}.$$
For instance, if $s_Q=0$ for every $Q\in \mathcal{P}^k$ with $k\leq N$, then we can replace $ \Crr{co}(t,q, ( |\mathcal{P}^k|^{t(s-\frac{1}{p} +\frac{1}{t})} )_k ) $ by  $\Crr{co}(t,q, ( |\mathcal{P}^k|^{t(s-\frac{1}{p} +\frac{1}{t})}1_{(N,\infty)}(k) )_k ) $ in (\ref{decay}). Here $1_{(N,\infty)}$ denotes  the indicator function of the set $(N,\infty)$.
\end{remark}

\begin{proposition}  Assume ${\Crr{fin}}$-${\Crr{sum}}$ and ${\Crr{banach}}$-${\Crr{atom}}$. Then  $\mathcal{B}^s_{p,q}(\mathcal{A})$ is a complex  linear space and $|\cdot|_{\mathcal{B}^s_{p,q}(\mathcal{A})}$ is a $\rho$-norm, with $\rho=\min\{1,p,q\}$. Moreover the linear inclusion
$$\imath\colon  (\mathcal{B}^s_{p,q}(\mathcal{A}),|\cdot|_{\mathcal{B}^s_{p,q}(\mathcal{A})})\rightarrow (L^p,|\cdot|_p)$$
is continuous.  \end{proposition}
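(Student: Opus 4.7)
The plan is to obtain the continuous inclusion into $L^p$ first — this forces positive-definiteness automatically; then to dispatch scalar homogeneity with a one-line rescaling; and finally to establish $\rho$-subadditivity by combining two candidate representations into a single one via the convexity axiom. For the inclusion $\imath$, I would invoke Proposition \ref{lp} with $t=p$: the compatibility requirement $s-1/p+1/t\geq 0$ reduces to $s\geq 0$, and Remark \ref{sharp} notes that condition ${\Crr{sum}}$ makes the constant $\Crr{kt}$ finite. Proposition \ref{lp} then yields $|g|_p\leq \Crr{kt}\,|g|_{\mathcal{B}^s_{p,q}(\mathcal{A})}$, proving continuity and, as a corollary, that $|g|_{\mathcal{B}^s_{p,q}(\mathcal{A})}=0$ forces $g=0$. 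Homogeneity is even easier: multiplying the coefficients $s_Q$ of any representation of $g$ by $\lambda\in\mathbb{C}$ produces a representation of $\lambda g$ with cost $|\lambda|$ times the original, so the infima scale by $|\lambda|$ in both directions.

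For $\rho$-subadditivity, the idea is to combine two near-optimal representations into one. Given $g=\sum_{k,Q} s_Q a_Q$ and $h=\sum_{k,Q} t_Q b_Q$, set
$$r_Q := |s_Q|+|t_Q|, \qquad c_Q := \frac{|s_Q|}{r_Q}\Bigl(\tfrac{s_Q}{|s_Q|}a_Q\Bigr) + \frac{|t_Q|}{r_Q}\Bigl(\tfrac{t_Q}{|t_Q|}b_Q\Bigr)$$
whenever $r_Q>0$ (and any atom otherwise, which costs nothing). Axiom ${\Crr{convex}}$ — invariance under unit-modulus scalars combined with convexity of $\mathcal{A}(Q)$ — is precisely what ensures $c_Q\in \mathcal{A}(Q)$, so $g+h=\sum_{k,Q} r_Q c_Q$ is a candidate representation whose cost is $|r|_{\ell_q(\ell_p^{\mathcal{P}})}$. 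Because that sequence space is a $\rho$-Banach space with $\rho=\min\{1,p,q\}$, the $\rho$-triangle inequality delivers $|r|^{\rho}\leq |s|^{\rho}+|t|^{\rho}$, and taking infima over the representations of $g$ and $h$ yields $|g+h|^{\rho}_{\mathcal{B}^s_{p,q}(\mathcal{A})}\leq |g|^{\rho}_{\mathcal{B}^s_{p,q}(\mathcal{A})}+|h|^{\rho}_{\mathcal{B}^s_{p,q}(\mathcal{A})}$.

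The one subtlety — arguably the main obstacle — is to confirm that the combined series $\sum_k\sum_Q r_Q c_Q$ converges absolutely in $L^p$ in the sense demanded by the definition, so that it is a genuine $\mathcal{B}^s_{p,q}(\mathcal{A})$-representation rather than a merely formal one. This is handled by a second application of Proposition \ref{lp} at $t=p$: the finiteness of $|r|_{\ell_q(\ell_p^{\mathcal{P}})}$, together with ${\Crr{sum}}$, bounds the level-$k$ $L^p$-norm of $\sum_Q r_Q c_Q$ by a multiple of $|\mathcal{P}^k|^{s}(\sum_Q r_Q^p)^{1/p}$ and provides the summability required in the $p/\hat{p}$-exponent. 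Once this verification is in hand, closure of $\mathcal{B}^s_{p,q}(\mathcal{A})$ under addition and scalar multiplication, as well as the remaining $\rho$-norm axioms, follow at once from the estimates above.
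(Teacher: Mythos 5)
Your proposal is correct and follows essentially the same route as the paper: continuity of $\imath$ and positive-definiteness via Proposition \ref{lp} with $t=p$, and $\rho$-subadditivity by merging two representations into a single one whose atoms are convex combinations of unit-modulus rescalings (axiom ${\Crr{convex}}$), then applying the $\rho$-triangle inequality in $\ell_q(\ell_p^{\mathcal{P}})$ and taking infima. The convergence check you flag is handled the same way in the paper, so there is nothing to add.
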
 
\begin{proof} Le $f , g  \in \mathcal{B}^s_{p,q}(\mathcal{A})$. Then there are $\mathcal{B}^s_{p,q}(\mathcal{A})$-representations 
$$ f = \sum_{k=0}^{\infty} \sum_{Q \in \mathcal{P}^k}    s_Q' a_Q'   \  and \ g = \sum_{k=0}^{\infty} \sum_{Q \in \mathcal{P}^k}    s_Q a_Q.$$
Let $sgn \ 0 =0$ and $sgn \ z =z/|z|$ otherwise. Of course
\begin{equation} \label{quo}  \sum_{Q \in \mathcal{P}^k}    c_Q b_Q = \sum_{Q \in \mathcal{P}^k}    s_Q' a_Q'  + \sum_{Q \in \mathcal{P}^k}    s_Q a_Q,\end{equation} 
where\footnote{We don't need to worry so much if  $|s_Q| + |s_Q'|=0$, since in this case  $c_Q=0$ and we can choose $b_Q$ to be an arbitrary atom (for instance $a_Q$) in such way that (\ref{quo}) holds.  For this reason we are not going to explicitly deal with similar situations ( that is going to appear quite often) along the paper.} 
$$b_Q =   \frac{|s_Q'|}{|s_Q| + |s_Q'|} sgn(s_Q') a_Q'     +  \frac{|s_Q|}{|s_Q| + |s_Q'|}  sgn(s_Q') a_Q,$$
and
$$c_Q = |s_Q'| + |s_Q|.$$
Note that $sgn(s_Q') a_Q', sgn(s_Q) a_Q$ are atoms due to \change{we added "to"}  ${\Crr{convex}}$. So  by ${\Crr{convex}}$  we have that $b_Q$ is also an atom, since it  is a convex combination of atoms. In particular 
$$\sum_k  \sum_{Q \in \mathcal{P}^k}    c_Q b_Q $$
converges absolutely in $L^p$ to $f+g$. It remains to prove that this is a $\mathcal{B}^s_{p,q}(\mathcal{A})$-representation of $f+g$. Indeed
$$\big( \sum_{k=0}^{\infty} (\sum_{Q \in \mathcal{P}^k}    |c_Q|^p)^{q/p} \big)^{\rho/q} \leq \big( \sum_{k=0}^{\infty} (\sum_{Q \in \mathcal{P}^k}    |s_Q'|^p)^{q/p} \big)^{\rho/q} + \big( \sum_{k=0}^{\infty} (\sum_{Q \in \mathcal{P}^k}    |s_Q|^p)^{q/p} \big)^{\rho/q}.$$
Taking the infimum over all possible $\mathcal{B}^s_{p,q}(\mathcal{A})$-representations of $f$ and $g$ we obtain
$$|f+g|_{\mathcal{B}^s_{p,q}(\mathcal{A})}^\rho \leq  |f|_{\mathcal{B}^s_{p,q}(\mathcal{A})}^\rho  + |g|_{\mathcal{B}^s_{p,q}(\mathcal{A})}^\rho.$$
The identity $ |\gamma f|_{\mathcal{B}^s_{p,q}(\mathcal{A})}=  |\gamma| |f|_{\mathcal{B}^s_{p,q}(\mathcal{A})}$ is obvious.  By Proposition \ref{lp} we have that  if $|f|_{\mathcal{B}^s_{p,q}(\mathcal{A})}=0$ then $|f|_p=0$, so $f=0$, so $|\cdot|_{\mathcal{B}^s_{p,q}(\mathcal{A})}$ is a $\rho$-norm, moreover  (\ref{inclulp}) tell us that $\imath$ is continuous. 
\end{proof}
\begin{proposition}\label{compa2} Assume ${\Crr{fin}}$-${\Crr{sum}}$ and ${\Crr{banach}}$-${\Crr{atom}}$. Suppose that $g_n$ are functions in $\mathcal{B}^s_{p,q}(\mathcal{A})$ with  $\mathcal{B}^s_{p,q}(\mathcal{A})$-representations 
$$g_n=\sum_{k=0}^{\infty}\sum_{Q\in \SP^k}s_Q^na_Q^n,$$
where $a_Q^n$ is a $\mathcal{A}$-atom supported on $Q$, satisfying 
\begin{itemize}
\item[i.] There is $C$ such that for every $n$ 
\begin{equation}\label{estt}  (\sum_{k=0}^{\infty}(\sum_{Q\in \SP^k}|s_Q^n|^p)^{q/p})^{1/q}\leq C.\end{equation} 
\item[ii.] For every $Q\in \mathcal{P}$ we have that $s_Q=\lim_n s_Q^n$ exists. 
\item[iii.] For every $Q\in \mathcal{P}$ there is  $a_Q\in \mathcal{A}(Q)$ such that 
\begin{enumerate}
\item  either the sequence $a_Q^n$ converges to $a_Q$  in the strong topology  of $L^p$, or
\item we have  $p\in [1,\infty)$ and $a_Q^n$ weakly converges to  $a_Q$.
\end{enumerate}
\end{itemize}
then $g_n$ either strongly or weakly  converges in $L^p$, respectively,  to $g \in \mathcal{B}^s_{p,q}(\mathcal{A})$,  where  $g$ has  the   $\mathcal{B}^s_{p,q}(\mathcal{A})$-representation
\begin{equation}\label{repp} g=\sum_{k=0}^{\infty}\sum_{Q\in \SP^k}s_Q a_Q\end{equation} 
that satisfies 
\begin{equation}\label{estt2} (\sum_{k=0}^{\infty}(\sum_{Q\in \SP^k}|s_Q^n|^p)^{q/p})^{1/q}\leq C\end{equation} 
\end{proposition}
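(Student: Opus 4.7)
The plan is to first construct $g$ from its proposed representation \eqref{repp} and then verify $g_n\to g$ (strongly or weakly, as appropriate) by truncating the outer sum at a high level $K$. For existence: since each $\mathcal{P}^k$ is finite by ${\Crr{fin}}$ and $s_Q^n\to s_Q$ pointwise in $Q$, the inner sum $\sum_{Q\in\mathcal{P}^k}|s_Q^n|^p$ converges to $\sum_{Q\in\mathcal{P}^k}|s_Q|^p$ for each $k$. Fatou's lemma on the outer $k$-sum, together with the uniform bound \eqref{estt}, yields the cost estimate \eqref{estt2}, and Proposition \ref{lp} with $t=p$ then upgrades this to absolute $L^p$-convergence of \eqref{repp} to a function $g\in\mathcal{B}^s_{p,q}(\mathcal{A})$.

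For the convergence $g_n\to g$, I would fix $K\geq 1$ and split
$$g_n-g=A_n^K+T_n^K-T^K,$$
where $A_n^K=\sum_{k<K}\sum_{Q\in\mathcal{P}^k}(s_Q^n a_Q^n-s_Q a_Q)$ is the finite initial part and $T_n^K, T^K$ are the tails $\sum_{k\geq K}\sum_{Q\in\mathcal{P}^k}s_Q^n a_Q^n$ and $\sum_{k\geq K}\sum_{Q\in\mathcal{P}^k}s_Q a_Q$ respectively. Applying Proposition \ref{lp} with $t=p$, sharpened via Remark \ref{sharp} to restrict to $k\geq K$, bounds both $|T_n^K|_p$ and $|T^K|_p$ by $\Crr{mult1}^{1+1/p}\,C\,\kappa_K$, where $\kappa_K:=\Crr{co}(p,q,(|\mathcal{P}^k|^{ps}\mathbf{1}_{[K,\infty)}(k))_k)$. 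A case-check of Proposition \ref{holder}, together with assumption ${\Crr{sum}}$ (which gives both the convergence of the relevant series and $|\mathcal{P}^k|\to 0$), shows that $\kappa_K\to 0$ as $K\to\infty$, uniformly in $n$.

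With $K$ fixed, $A_n^K$ is a finite sum whose terms decompose as $s_Q^n a_Q^n-s_Q a_Q=(s_Q^n-s_Q)a_Q^n+s_Q(a_Q^n-a_Q)$. Since $|a_Q^n|_p\leq |Q|^s$ uniformly by ${\Crr{atom}}$ combined with H\"older's inequality, the first summand tends to zero in $L^p$, while the second tends to zero strongly in $L^p$ under (iii.1) and weakly in $L^p$ under (iii.2). Assembling this with the tail bound via the $\rho$-quasinorm inequality $|f+h|_p^{\rho}\leq|f|_p^{\rho}+|h|_p^{\rho}$ in the strong case, or by testing against an arbitrary $\psi\in L^{p'}$ in the weak case, and sending $n\to\infty$ before $K\to\infty$, gives $g_n\to g$ in the corresponding topology. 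The main delicate point I anticipate is the uniform-in-$n$ vanishing of the tail estimate, which is secured by hypothesis \eqref{estt}; running the weak and strong arguments in parallel works because the strong $L^p$ tail bound automatically controls the dual pairing $\langle T_n^K-T^K,\psi\rangle$ in the weak setting.
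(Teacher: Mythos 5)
Your argument is correct and follows essentially the same route as the paper's proof: Fatou on the outer sum for the cost bound \eqref{estt2}, Proposition \ref{lp} together with Remark \ref{sharp} and hypothesis ${\Crr{sum}}$ to make the tails beyond level $K$ uniformly small in $n$, and finite-sum convergence (strong or tested against $L^{p'}$) for the initial block. The only cosmetic difference is that you estimate the two tails $T_n^K$ and $T^K$ separately, whereas the paper merges them into a single atomic series with coefficients $|s_Q^n|+|s_Q|$ and atoms given by convex combinations before invoking Proposition \ref{lp} once; both yield the same bound.
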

\begin{proof}By (\ref{estt}) it follows that (\ref{estt2}) holds and that (\ref{repp}) is indeed a $\mathcal{B}^s_{p,q}(\mathcal{A})$-representation of a function $g$. It remains to prove that $g_n$ converges to $g$ in $L^p$ in the topology under consideration. Given $\epsilon > 0$,  fix  $N$ large enough such that 
$$\Crr{mult1}^{1+1/p}  \Crr{co}(p,q, ( |\mathcal{P}^k|^{ps}1_{[N,\infty)}(k) )_k )   (2C^\rho+1)^{1/\rho}< (\epsilon/2)^{\hat{p}/p}.$$
We can write
$$g_n-g= \sum_{k\leq N} \sum_{Q\in \SP^k} (s_Q^na_Q^n - s_Qa_Q) + \sum_{k> N} \sum_{Q\in \SP^k}  c_Q^n b_Q^n,$$
where
$$b_Q^n =   \frac{|s_Q^n|}{|s_Q^n| + |s_Q|} sgn(s_Q^n) a_Q^n     +  \frac{|s_Q|}{|s_Q^n| + |s_Q|}  sgn(-s_Q) a_Q,$$
is an atom in $\mathcal{A}(Q)$, and 
$$c_Q^n = |s_Q^n| + |s_Q|.$$
Note that  the series in the r.h.s. converges absolutely  in $L^p$. Of course
$$ (\sum_{k=0}^{\infty}(\sum_{Q\in \SP^k}|c_Q^n|^p)^{q/p})^{1/q}\leq (2C^\rho+1)^{1/\rho},$$
So by (\ref{inclulp})  in Proposition \ref{lp} (see also Remark \ref{sharp}) we have 
$$
|\sum_{k> N} \sum_{Q \in \mathcal{P}^k}   c_Q^n b_Q^n|_p\leq \Crr{mult1}^{1+1/p}  \Crr{co}(p,q, ( |\mathcal{P}^k|^{ps}1_{[N,\infty)}(k) )_k )   (2C^\rho+1)^{1/\rho}  < (\epsilon/2)^{\hat{p}/p}. 
$$
In the case $ii.1$, note that if $n$  is large enough  then 
$$| \sum_{k\leq N} \sum_{Q\in \SP^k} (s_Q^na_Q^n - s_Qa_Q)|_p^{p/\hat{p}}< \epsilon/2,$$
and consequently $|g-g_n|_p^{p/\hat{p}} < \epsilon$.  So $g_n$ strongly converges to $g$. 

\noindent In the case $ii.2$, given $\phi\in (L^p)^\star$, with $p\geq 1$ we have that for $n$  large enough
$$|\phi(\sum_{k\leq N} \sum_{Q\in \SP^k} (s_Q^na_Q^n - s_Qa_Q))|\leq |\phi|_{(L^p)^\star} \epsilon/2,$$
and of course
$$
|\phi(\sum_{k> N} \sum_{Q \in \mathcal{P}^k}   c_Q^n b_Q^n|_p)| \leq  |\phi|_{(L^p)^\star}  \epsilon/2,
$$
so $g_n$ weakly converges to $g$ in $L^p$. 
\end{proof}

\begin{corollary}\label{compa1}  Assume ${\Crr{fin}}$-${\Crr{sum}}$ and ${\Crr{banach}}$-${\Crr{atom}}$, and 
\begin{enumerate}
\item either ${\Crr{compact}}$ or
\item we have $p\geq 1$ and  ${\Crr{compactw}}$.
\end{enumerate}
Then
\begin{itemize}
\item[i.] Let $g_n \in \mathcal{B}^s_{p,q}(\mathcal{A})$ be such that $|g_n|_{\mathcal{B}^s_{p,q}(\mathcal{A})}\leq C$ for every $n$. Then there is a subsequence that converges either strongly or weakly in $L^p$, respectively,  to some $g \in \mathcal{B}^s_{p,q}(\mathcal{A})$ with $|g|_{\mathcal{B}^s_{p,q}(\mathcal{A})}\leq~C$.
\item[ii.] In both cases $(\mathcal{B}^s_{p,q}(\mathcal{A}), |\cdot|_{\mathcal{B}^s_{p,q}(\mathcal{A})})$ is a complex $\rho$-Banach space, with $\rho=\min\{1,p,q\}$, 
\item[iii] If ${\Crr{compact}}$ holds then  the inclusion 
$$\imath \colon (\mathcal{B}^s_{p,q}(\mathcal{A}), |\cdot|_{\mathcal{B}^s_{p,q}(\mathcal{A})}) \rightarrow (L^p, |\cdot|_p)$$
is a compact linear inclusion. 
\end{itemize} 
\end{corollary}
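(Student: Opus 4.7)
My plan is to prove (i) first by a diagonal extraction combined with Proposition \ref{compa2}, then use (i) as a black box to establish completeness (ii), and observe that (iii) is an immediate byproduct of the strong-convergence version of (i).

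For (i), I would begin by picking, for each $n$, a $\mathcal{B}^s_{p,q}(\mathcal{A})$-representation of $g_n$ whose cost is at most $|g_n|_{\mathcal{B}^s_{p,q}(\mathcal{A})} + 1/n \leq C + 1$. From the cost bound, each individual scalar satisfies $|s_Q^n| \leq C + 1$, so $(s_Q^n)_n$ is bounded in $\mathbb{C}$ for every $Q$; meanwhile each $a_Q^n$ lies in $\mathcal{A}(Q)$, which by hypothesis is either strongly compact or sequentially weakly compact in $L^p$. Since $\mathcal{P} = \cup_k \mathcal{P}^k$ is a countable union of finite families, I enumerate its elements and perform a standard diagonal extraction to obtain a subsequence along which $s_Q^n \to s_Q \in \mathbb{C}$ and $a_Q^n \to a_Q \in \mathcal{A}(Q)$ (in the appropriate $L^p$-topology) simultaneously for every $Q \in \mathcal{P}$. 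Proposition \ref{compa2} then produces a limit $g \in \mathcal{B}^s_{p,q}(\mathcal{A})$ with representation $\sum_k \sum_Q s_Q a_Q$ and cost at most $C + 1$. To sharpen this to the bound $|g|_{\mathcal{B}^s_{p,q}(\mathcal{A})} \leq C$ stated in the corollary, I would invoke a Fatou-type lower semicontinuity: since each $\mathcal{P}^k$ is finite, $\sum_{Q \in \mathcal{P}^k} |s_Q|^p = \lim_n \sum_{Q \in \mathcal{P}^k} |s_Q^n|^p$ for every $k$, and applying Fatou's lemma in $k$ yields $\sum_k \big(\sum_{Q} |s_Q|^p\big)^{q/p} \leq \liminf_n \sum_k \big(\sum_{Q} |s_Q^n|^p\big)^{q/p}$, so the $1/q$-power gives the desired bound $\liminf_n (|g_n|_{\mathcal{B}^s_{p,q}(\mathcal{A})} + 1/n) \leq C$.

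For (ii), I would take a Cauchy sequence $(f_n)_n$ in $\mathcal{B}^s_{p,q}(\mathcal{A})$, which is automatically bounded, so (i) extracts a subsequence $f_{n_k}$ converging in $L^p$ (strongly or weakly, according to which hypothesis is in force) to some $f \in \mathcal{B}^s_{p,q}(\mathcal{A})$. To upgrade this subsequence convergence to norm convergence of the full sequence, I fix $\epsilon > 0$, choose $N$ with $|f_n - f_m|_{\mathcal{B}^s_{p,q}(\mathcal{A})} < \epsilon$ for all $n, m \geq N$, and for each $n \geq N$ apply (i) to the bounded sequence $h_k := f_n - f_{n_k}$ indexed by $n_k \geq N$. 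This yields a sub-sub-sequence converging in $L^p$ to some $h \in \mathcal{B}^s_{p,q}(\mathcal{A})$ with $|h|_{\mathcal{B}^s_{p,q}(\mathcal{A})} \leq \epsilon$. Uniqueness of $L^p$-limits (valid in both the strong and weak topologies) forces $h = f_n - f$, so $|f_n - f|_{\mathcal{B}^s_{p,q}(\mathcal{A})} \leq \epsilon$ for every $n \geq N$, which is the desired norm convergence.

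Part (iii) falls out for free: under ${\Crr{compact}}$ the subsequence produced by (i) converges strongly in $L^p$, so any bounded subset of $\mathcal{B}^s_{p,q}(\mathcal{A})$ has an $L^p$-convergent subsequence under $\imath$, which is the definition of compactness of the inclusion. The main obstacle I anticipate is the bookkeeping for the diagonal extraction, which must control scalars and atoms simultaneously across all countably many cells $Q \in \mathcal{P}$; the remainder of the argument is a careful repackaging of Proposition \ref{compa2} together with the Fatou step used to squeeze the final norm bound from $C + 1$ down to the sharp constant $C$.
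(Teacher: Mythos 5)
Your proposal is correct and follows essentially the same route as the paper: diagonal extraction of scalars and atoms, Proposition \ref{compa2} to identify the limit and its representation, the bound from (i) reused to upgrade $L^p$-convergence of a Cauchy sequence to convergence in the $\rho$-norm, and (iii) read off from the strong case of (i). Your explicit Fatou step to pass from the cost bound $C+1/n$ down to $C$ is a welcome clarification of a point the paper leaves implicit when it invokes Proposition \ref{compa2} with the non-uniform bound $C+\eps_n$.
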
 
\begin{proof}[ Proof of i.] There are $\mathcal{B}^s_{p,q}(\mathcal{A})$-representations
$$g_n=\sum_{k=0}^{\infty}\sum_{Q\in \SP^k}s_Q^na_Q^n,$$
where $a_Q^n$ is a $\mathcal{A}$-atom supported on $Q$ and
\begin{equation} \label{sed0}  (\sum_{k=0}^{\infty}(\sum_{Q\in \SP^k}|s_Q^n|^p)^{q/p})^{1/q}\leq C+\eps_n,\end{equation}
and  $1\geq \eps_n\to 0$.
In particular, $|s_Q^n|\leq C+1$. Since the set $\cup_k \mathcal{P}^k$ is countable,  by the Cantor  diagonal argument, taking a subsequence we can assume  that $s_Q^n\to_n s_Q$ for some $s_Q \in \mathbb{C}$.  Due to \change{we added "to"}  ${\Crr{compact}}$ (${\Crr{compactw}}$) and the Cantor  diagonal argument, we can suppose that  $a_Q^n$ strongly (weakly) converges  in $L^p$ to  some $a_Q \in \mathcal{A}(Q)$. We set 
$$g=\sum_{k=0}^{\infty}\sum_{Q\in \SP^k}s_Qa_Q.$$
By Proposition \ref{compa2}  we conclude that  $g\in \mathcal{B}^s_{p,q}(\mathcal{A})$ with  $|g|_{\mathcal{B}^s_{p,q}(\mathcal{A})}\leq C$, and that $g_n$ converges to $g$ in $L^p$.
\end{proof}
\begin{proof}[Proof of ii.] Let $g_n$ be a Cauchy sequence on $\mathcal{B}^s_{p,q}(\mathcal{A})$. By Proposition \ref{lp} we have that $g_n$ is also a Cauchy sequence in $L^p$. Let $g$ be its limit in $L^p$. By Corollary   \ref{compa1}.i  have that $g \in \mathcal{B}^s_{p,q}(\mathcal{A})$. Note that for large $m$ and $n$  
$$|g_n-g_m|_{\mathcal{B}^s_{p,q}(\mathcal{A})}\leq \epsilon,$$
and $g_n - g_m$ converges to $g_n-g$ in $L^p$, so again by Corollary  \ref{compa1}.i  we have that 
$$|g_n-g|_{\mathcal{B}^s_{p,q}(\mathcal{A})}\leq \epsilon,$$
so $g_n$ converges to $g$ in $\mathcal{B}^s_{p,q}(\mathcal{A})$.
\end{proof}

\begin{proof}[Proof of iii.]  It follows from i. \end{proof}

The proof of the following result \change{we replaced "resukt" by "result"}  is quite similar. 
\begin{corollary}\label{compa12}  Assume ${\Crr{fin}}$-${\Crr{sum}}$ and ${\Crr{banach}}$-${\Crr{atom}}$, and 
\begin{enumerate}
\item either ${\Crr{compact}}$ or
\item we have $p\geq 1$ and  ${\Crr{compactw}}$.
\end{enumerate}
Then for every $f\in \mathcal{B}^s_{p,q}(\mathcal{A})$ there is a $\mathcal{B}^s_{p,q}(\mathcal{A})$-representation
$$f=\sum_k \sum_{P\in \mathcal{P}^k}   c_P a_P$$
such that 
$$|f|_{\mathcal{B}^s_{p,q}(\mathcal{A})}=\Big( \sum_k \big(\sum_{P\in \mathcal{P}^k}  |c_P|^p\big)^{q/p} \Big)^{1/q}.$$
\end{corollary}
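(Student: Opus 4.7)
The plan is to extract a limiting representation from a minimizing sequence and then use Fatou's lemma to recover the exact infimum. First, by the definition of $|\cdot|_{\mathcal{B}^s_{p,q}(\mathcal{A})}$ as an infimum, for each $n \geq 1$ pick a $\mathcal{B}^s_{p,q}(\mathcal{A})$-representation
$$f = \sum_k \sum_{P \in \mathcal{P}^k} c_P^n\, a_P^n, \qquad a_P^n \in \mathcal{A}(P),$$
whose cost $C_n := \bigl(\sum_k(\sum_P|c_P^n|^p)^{q/p}\bigr)^{1/q}$ satisfies $C_n \to |f|_{\mathcal{B}^s_{p,q}(\mathcal{A})}$. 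In particular the $C_n$ are uniformly bounded, say by some $C$.

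Second, I would extract convergent subsequences exactly as in the proof of Corollary \ref{compa1}.i. The coefficients satisfy $|c_P^n| \leq C_n \leq C$ and the index set $\cup_k \mathcal{P}^k$ is countable (each $\mathcal{P}^k$ is finite by $\Crr{fin}$), so a Cantor diagonal argument produces a subsequence along which $c_P^n \to c_P$ for every $P \in \mathcal{P}$. A further diagonal extraction, using either $\Crr{compact}$ (strong case) or $\Crr{compactw}$ (weak case), gives atoms $a_P \in \mathcal{A}(P)$ with $a_P^n \to a_P$ strongly (resp.\ weakly) in $L^p$.

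Third, Proposition \ref{compa2} applied to the constant sequence $g_n \equiv f$ (with the minimizing representations above as input) shows that
$$g = \sum_k \sum_{P \in \mathcal{P}^k} c_P\, a_P$$
is a bona fide $\mathcal{B}^s_{p,q}(\mathcal{A})$-representation, and that $f = g_n$ converges in $L^p$ (strongly or weakly, respectively) to $g$. Since the $L^p$-limit of a constant sequence is that constant, $g = f$ almost everywhere, so this series represents $f$.

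Finally, to pin down the sharp cost, I would apply Fatou's lemma rather than relying on the qualitative bound (\ref{estt2}). Each inner sum $\sum_{P \in \mathcal{P}^k} |c_P^n|^p$ is finite (by $\Crr{fin}$) and converges termwise to $\sum_{P \in \mathcal{P}^k} |c_P|^p$, hence Fatou in the outer index $k$ gives
$$\Bigl(\sum_k \bigl(\sum_{P \in \mathcal{P}^k} |c_P|^p\bigr)^{q/p}\Bigr)^{1/q} \leq \liminf_n C_n = |f|_{\mathcal{B}^s_{p,q}(\mathcal{A})},$$
while the reverse inequality is immediate from the definition of the norm as an infimum over representations.

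The only subtle point—and the step I expect to be the main (if modest) obstacle—is recognizing that (\ref{estt2}) alone is not sharp enough: it only furnishes cost $\leq C$ for whatever uniform bound one chose at the start, so the minimizing-sequence-plus-Fatou upgrade is essential to recover the exact infimum rather than an infimum plus $\epsilon$.
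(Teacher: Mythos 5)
Your proof is correct and follows essentially the route the paper intends: the paper gives no explicit proof of Corollary \ref{compa12}, saying only that it is ``quite similar'' to Corollary \ref{compa1}, and your argument (minimizing sequence of representations, diagonal extraction of coefficients and atoms, Proposition \ref{compa2} to identify the limit series as a representation of $f$, and a Fatou/liminf step to convert the crude bound of (\ref{estt2}) into the exact infimum) is precisely that argument spelled out. Your closing observation that the Fatou upgrade is needed because (\ref{estt2}) alone only yields a non-sharp bound is the right point to flag.
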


We refer to Edmunds and  Triebel \cite{et} for more information on compact linear transformations between quasi-Banach spaces.

\begin{corollary} Assume ${\Crr{fin}}$-${\Crr{sum}}$ and ${\Crr{banach}}$-${\Crr{atom}}$. If  for every $Q\in \mathcal{P}$ we have that $\mathcal{B}(Q)$ is finite-dimensional and $\mathcal{A}(Q)$ is a closed subset of $\mathcal{B}(Q)$    then 
 $(\mathcal{B}^s_{p,q}(\mathcal{A}), |\cdot|_{\mathcal{B}^s_{p,q}(\mathcal{A})})$ is a $\rho$-Banach space, with $\rho=\min\{1,p,q\}$.
\end{corollary}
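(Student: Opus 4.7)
The plan is to reduce this corollary to Corollary \ref{compa1}.ii by verifying that condition ${\Crr{compact}}$ holds automatically from the finite-dimensionality and closedness hypotheses. Once ${\Crr{compact}}$ is in place, the completeness statement follows immediately from what has already been proved.

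First I would fix $Q\in\mathcal{P}$ and observe that $\mathcal{B}(Q)$ is a finite-dimensional complex Banach space embedded in $L^{pu}$ by ${\Crr{banach}}$. Since all norms on a finite-dimensional space are equivalent, the norm topology of $\mathcal{B}(Q)$ coincides with the topology induced by $|\cdot|_{pu}$. The atom bound ${\Crr{atom}}$ says that $\mathcal{A}(Q)$ lies inside the closed ball of radius $|Q|^{s-1/(u'p)}$ in $L^{pu}$, so $\mathcal{A}(Q)$ is $L^{pu}$-bounded, and by equivalence of norms it is bounded in $\mathcal{B}(Q)$. Combined with the assumption that $\mathcal{A}(Q)$ is closed in $\mathcal{B}(Q)$ and the Heine–Borel theorem in finite dimensions, this shows that $\mathcal{A}(Q)$ is compact in $\mathcal{B}(Q)$, and in particular compact in $L^{pu}$.

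Next I would pass from $L^{pu}$ to $L^p$. Because $m(Q)<\infty$, H\"older's inequality yields
\[
|\phi|_p \leq |Q|^{1/p-1/(pu)}\,|\phi|_{pu}\quad\text{for all }\phi\in\mathcal{B}(Q),
\]
so the inclusion $\mathcal{B}(Q)\hookrightarrow L^p$ is continuous, and the compact set $\mathcal{A}(Q)\subset L^{pu}$ has compact image in $L^p$. This is exactly condition ${\Crr{compact}}$.

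Finally, with ${\Crr{fin}}$-${\Crr{sum}}$, ${\Crr{banach}}$-${\Crr{atom}}$ and ${\Crr{compact}}$ all in force, Corollary \ref{compa1}.ii applies and gives that $(\mathcal{B}^s_{p,q}(\mathcal{A}),|\cdot|_{\mathcal{B}^s_{p,q}(\mathcal{A})})$ is a $\rho$-Banach space with $\rho=\min\{1,p,q\}$. The only subtle point is the identification of the "closedness in $\mathcal{B}(Q)$" topology with the $L^{pu}$-topology, which is where the finite-dimensional hypothesis is essential; beyond that the argument is a direct invocation of the already-established machinery.
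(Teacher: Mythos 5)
Your proposal is correct and follows essentially the same route as the paper: both arguments use equivalence of norms in the finite-dimensional space $\mathcal{B}(Q)$ together with ${\Crr{atom}}$ to conclude that $\mathcal{A}(Q)$ is closed and bounded, hence compact, so that ${\Crr{compact}}$ holds and Corollary \ref{compa1}.ii applies. Your extra step passing from $L^{pu}$-compactness to $L^p$-compactness via H\"older on the finite measure space is a detail the paper leaves implicit, but it is the same argument.
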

\begin{proof} Since all norms are equivalent in $\mathcal{B}(Q)$ we have that ${\Crr{atom}}$ implies that $\mathcal{A}(Q)$ is a closed and bounded subset of $\mathcal{B}(Q)$, so it is compact. By Corollary \ref{compa1}.ii it follows that $(\mathcal{B}^s_{p,q}(\mathcal{A}), |\cdot|_{\mathcal{B}^s_{p,q}(\mathcal{A})})$ is a $\rho$-Banach space.
\end{proof}

\section{Scales of spaces} 

\label{escala} Note that a family of atoms $\mathcal{A}$ of type $(s,p,u)$ induces an one-parameter scale 
$$\tilde{s}\rightarrow \mathcal{A}_{\tilde{s},p},$$
where $\mathcal{A}_{\tilde{s},p}$ is the family of atoms of type $(\tilde{s},p,u)$ defined by 
$$\mathcal{A}_{\tilde{s},p}(Q)= \{ |Q|^{\tilde{s}-s}a_Q\colon \ a_Q\in \mathcal{A}\}.$$
Moreover  a family of atoms $\mathcal{A}$ of type $(s,p,\infty)$ induces a two-parameter scale
$$(\tilde{s},\tilde{p})\rightarrow \mathcal{A}_{\tilde{s},\tilde{p}},$$
where $\mathcal{A}_{\tilde{s},\tilde{p}}$ is the family of atoms of type $(\tilde{s},\tilde{p},\infty)$ defined by 
$$\mathcal{A}_{\tilde{p},\tilde{s}}(Q)= \{ |Q|^{\tilde{s}-s+1/p-1/\tilde{p} }a_Q\colon \ a_Q\in \mathcal{A}\}.$$

\begin{proposition} \label{compa} Assume ${\Crr{fin}}$-${\Crr{sum}}$. Suppose that the $(s,p,\infty)$-atoms $\mathcal{A}$ satisfy ${\Crr{banach}}$-${\Crr{atom}}$. Let $0\leq s < \tilde{s} $ and $q,\tilde{q} \in [1,\infty]$.  Suppose
$$\Big(  \sum_{k}  |\mathcal{P}^k|^{q(\tilde{s}-s)} \Big)^{1/q}  < \infty.$$
Then 
\begin{itemize}
\item[A.] We have $\mathcal{B}^{\tilde{s}}_{p,\tilde{q}}(\mathcal{A}_{\tilde{s},p})\subset \SB^s_{p,q}(\mathcal{A}_{s,p})$ and the inclusion is a continuous linear map.
\item[B.] Suppose that also satisfies ${\Crr{compact}}$.  Let $g_n \in \mathcal{B}^{\tilde{s}}_{p,\tilde{q}}(\mathcal{A}_{\tilde{s},p})$ be such that $|g_n|_{\mathcal{B}^{\tilde{s}}_{p,\tilde{q}}(\mathcal{A}_{\tilde{s},p})}\leq C$ for every $n$. Then there is a subsequence that converges in $\SB^s_{p,q}(\mathcal{A}_{s,p})$ to some $g \in \mathcal{B}^{\tilde{s}}_{p,\tilde{q}}(\mathcal{A}_{s,p})$ with $|g|_{\mathcal{B}^{\tilde{s}}_{p,\tilde{q}}(\mathcal{A}_{\tilde{s},p})}\leq C$. 
\item[C.] Suppose that also satisfies ${\Crr{finite}}$. The inclusion $\imath\colon \mathcal{B}^{\tilde{s}}_{p,\tilde{q}}(\mathcal{A}_{\tilde{s},p})\mapsto \SB^s_{p,q}(\mathcal{A}_{s,p})$ is a compact linear map. 
\end{itemize}
\end{proposition}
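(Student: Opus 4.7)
I would prove the three parts in sequence, each building on the previous.

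For Part A, given a representation $g = \sum_k \sum_{Q \in \mathcal{P}^k} s_Q\, \tilde a_Q$ in $\mathcal{B}^{\tilde s}_{p, \tilde q}(\mathcal{A}_{\tilde{s},p})$, I write each atom as $\tilde a_Q = |Q|^{\tilde s - s} a_Q$ with $a_Q \in \mathcal{A}(Q) = \mathcal{A}_{s,p}(Q)$, so that $g = \sum_k \sum_Q (s_Q |Q|^{\tilde s - s}) a_Q$ is a $\mathcal{B}^s_{p,q}(\mathcal{A}_{s,p})$-representation. Using $|Q|^{\tilde s - s} \leq |\mathcal{P}^k|^{\tilde s - s}$ (since $\tilde s > s$), the inner $\ell^p$-sum at level $k$ picks up a factor $|\mathcal{P}^k|^{\tilde s - s}$. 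The outer $\ell^q$-sum is then bounded via H\"older's inequality, treating the three cases $\tilde q > q$, $\tilde q = q$, $\tilde q < q$ separately: H\"older with conjugate pair $(\tilde q / q, (\tilde q / q)')$, a direct bound, or the embedding $\ell^{\tilde q} \hookrightarrow \ell^q$ respectively. The hypothesis $\sum_k |\mathcal{P}^k|^{q(\tilde s - s)} < \infty$ together with $|\mathcal{P}^k| \to 0$ makes the relevant weighted sum finite in every case. Taking the infimum over representations yields the claimed continuity.

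For Part B, under $\Crr{compact}$ I use the Cantor diagonal argument (as in Corollary \ref{compa1}.i) to extract a subsequence with $s_Q^n \to s_Q$ and $a_Q^n \to a_Q$ strongly in $L^p$ for every $Q$; Proposition \ref{compa2} identifies the $L^p$-limit $g = \sum_k \sum_Q s_Q |Q|^{\tilde s - s} a_Q$ as an element of $\mathcal{B}^{\tilde s}_{p, \tilde q}(\mathcal{A}_{\tilde{s},p})$ with norm $\leq C$. To lift this $L^p$-convergence to convergence in $\mathcal{B}^s_{p,q}(\mathcal{A}_{s,p})$, I would split $g_n - g$ at a level $N$: for the tail $k > N$, combine the two atomic representations at each $Q$ into a single atom with coefficient $c_Q^n = |s_Q^n| + |s_Q|$ (as in Proposition \ref{compa2}), and then apply the Part-A H\"older estimate restricted to $k > N$; the resulting bound carries the weight $\bigl(\sum_{k>N} |\mathcal{P}^k|^{q(\tilde s - s)(\tilde q / q)'}\bigr)^{1/(q(\tilde q / q)')}$, which tends to $0$ as $N \to \infty$, uniformly in $n$. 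The hard part will be the head $k \leq N$: even though it is a finite sum, strong $L^p$-convergence of the atoms does not directly yield convergence in the atomic norm (which depends on the $L^{pu}$-norm), so the contribution $|Q|^{\tilde s - s}\, s_Q\, (a_Q^n - a_Q)$ needs extra care. I would exploit the bound on $\|a_Q^n - a_Q\|_{L^{pu}}$ and interpolation between $L^p$ and $L^{pu}$ (giving $L^r$-convergence for every $p \leq r < pu$), together with the scale factor $|Q|^{\tilde s - s}$ and the convexity/symmetry of $\mathcal{A}(Q)$, to build an atomic representation of each head term with coefficient going to $0$.

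For Part C, the stronger hypothesis $\Crr{finite}$ makes $\mathcal{B}(Q)$ finite-dimensional, so all norms on it are equivalent; since $\mathcal{A}(Q)$ contains a neighborhood of $0$, its Minkowski functional is a norm equivalent to the $L^p$-norm on $\mathcal{B}(Q)$. In particular the $L^p$-convergence $a_Q^n \to a_Q$ yields a decomposition $a_Q^n - a_Q = \varepsilon_Q^n \phi_Q^n$ with $\phi_Q^n \in \mathcal{A}(Q)$ and $\varepsilon_Q^n \to 0$, which removes the obstacle in the head of Part B: every head term has atomic coefficient going to $0$, so the head itself converges to $0$ in the $\mathcal{B}^s_{p,q}$-norm. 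Combined with the uniform tail bound from Part B, this shows that the inclusion $\imath$ sends bounded sets to relatively compact sets, i.e., it is a compact linear map.
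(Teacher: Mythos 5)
Your Part A is correct and essentially the paper's argument; the only difference is that the paper avoids the three-case H\"older discussion by simply bounding the level-$k$ inner sum $\big(\sum_{Q\in\mathcal{P}^k}|s_Q|^p\big)^{1/p}$ by the full $\ell^{\tilde q}(\ell^p)$-norm of the coefficients (valid since $\tilde q\geq 1$) and then summing $\sum_k|\mathcal{P}^k|^{q(\tilde s-s)}$ directly. Your Part C is also the right argument and is, in substance, what the paper does: under $\Crr{finite}$ the set $\mathcal{A}(Q)$ absorbs a $\mathcal{B}(Q)$-neighborhood of $0$, so for each of the finitely many head terms one can write $s_Q^n a_Q^n-s_Qa_Q=\delta\, d_Q^n$ with $d_Q^n\in\mathcal{A}_{s,p}(Q)$ and $\delta$ arbitrarily small once $n$ is large, and the tail is controlled uniformly by the Part A estimate restricted to $k>N$.

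The genuine gap is in your Part B, precisely at the step you flag as ``the hard part.'' Interpolation between $L^p$ and $L^{pu}$ cannot close it: $\mathcal{A}(Q)$ is an abstract convex, circled subset of $\mathcal{B}(Q)$ constrained only by the \emph{upper} bound $\Crr{atom}$; there is no reverse inclusion of a small norm-ball into $\mathcal{A}(Q)$ unless $\Crr{finite}$ holds. Convexity and symmetry do give $\tfrac12(a_Q^n-a_Q)\in\mathcal{A}(Q)$, but then the head term $s_Q|Q|^{\tilde s-s}(a_Q^n-a_Q)$ is $2|s_Q|\,|Q|^{\tilde s-s}$ times an atom, and this coefficient does not tend to $0$; knowing that $|a_Q^n-a_Q|_{L^r}\to 0$ for $p\le r<pu$ does not let you write $a_Q^n-a_Q$ as a \emph{small multiple} of an element of $\mathcal{A}(Q)$. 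So with only $\Crr{compact}$ you obtain the $L^p$-limit $g$, its membership in $\mathcal{B}^{\tilde s}_{p,\tilde q}(\mathcal{A}_{\tilde s,p})$ and the bound $|g|\leq C$ (via Proposition \ref{compa2}), but not convergence of the subsequence in the $\SB^s_{p,q}(\mathcal{A}_{s,p})$-norm. You should be aware that the paper's own proof of B has the same issue: it explicitly invokes $\Crr{finite}$ (and convergence of $a_Q^n$ in $\mathcal{B}(Q)$, which $\Crr{compact}$ alone does not provide) to handle the head, so the convergence claim of B is really established only under the hypotheses of C. Your instinct about where the difficulty lies is exactly right; the resolution you give in Part C is the one that actually works, and your proposed workaround in Part B does not.
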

\begin{proof} Consider a $\mathcal{B}^{\tilde{s}}_{p,\tilde{q}}(\mathcal{A}_{\tilde{s},p})$-representation 
$$f=\sum_{k=0}^{\infty}\sum_{Q\in \SP^k}s_Qa_Q,$$
Since $a_Q$ is an $\mathcal{A}_{\tilde{s},p}$-atom, we have that $b_Q=a_Q|Q|^{s-\tilde{s}}$ is an $\mathcal{A}_{s,p}$-atom. In particular, we can write
$$f=\sum_{k=0}^{\infty}\sum_{Q\in \SP^k}s_Q|Q|^{\tilde{s}-s}b_Q.$$
If  $k\geq k_0$ then 
\begin{eqnarray} 
(\sum_{Q\in \SP^k}|s_Q|^p |Q|^{p(\tilde{s}-s)}\Big)^{1/p} &\leq&  |\mathcal{P}^k|^{\tilde{s}-s} (\sum_{Q\in \SP^k}|s_Q|^p )^{1/p} \nonumber \\ 
&\leq&  |\mathcal{P}^k|^{\tilde{s}-s}   \Big(\sum_{k\geq k_0 } (\sum_{Q\in \SP^k}|s_Q|^p )^{\tilde{q}/p}\Big)^{1/\tilde{q}},\nonumber
\end{eqnarray}
%Suppose that  $s_Q=0$ for every $Q\in \mathcal{P}^k$ with $k\geq k_0$. 
so 
\begin{eqnarray} 
&& \Big( \sum_{k\geq k_0} (\sum_{Q\in \SP^k}|s_Q|^p |Q|^{p(\tilde{s}-s)}\Big)^{q/p} \Big)^{1/q} \nonumber \\  &\leq& \Big(  \sum_{k\geq k_0}  |\mathcal{P}^k|^{q(\tilde{s}-s)} \Big)^{1/q}   \Big(\sum_{k\geq k_0 } (\sum_{Q\in \SP^k}|s_Q|^p )^{\tilde{q}/p}\Big)^{1/\tilde{q}}.\label{uyuy}
\end{eqnarray}

\noindent {\it Proof of A.} In particular, \change{we added a comma}  taking $k_0=0$ we conclude that $\mathcal{B}^{\tilde{s}}_{p,\tilde{q}}(\mathcal{A}_{\tilde{s},p})\subset \SB^s_{p,q}(\mathcal{A}_{s,p})$ and
$$|f|_{\SB^s_{p,q}(\mathcal{A}_{p,s})}\leq   \Big(  \sum_{k}  |\mathcal{P}^k|^{q(\tilde{s}-s)} \Big)^{1/q}  | f|_{\mathcal{B}^{\tilde{s}}_{p,\tilde{q}}(\mathcal{A}_{p,\tilde{s}})}.$$

\noindent {\it Proof of B.}  By definition, there exist  $s^n_Q\in \mathbb{C}$, such that
$$g_n=\sum_{k=0}^{\infty}\sum_{Q\in \SP^k}s_Q^na_Q^n,$$
where $a_Q^n$ is a  $\mathcal{A}_{\tilde{s},p}$-atom supported on $Q$ and
\begin{equation} \label{sed}  (\sum_{k=0}^{\infty}(\sum_{Q\in \SP^k}|s_Q^n|^p)^{\tilde{q}/p})^{1/\tilde{q}}\leq C+\eps_n,\end{equation}
where $\eps_n\to 0$.
In particular, $|s_Q^n|\leq C+\eps_n$. Since the set $\cup_k \mathcal{P}^k$ is countable,
  by the Cantor  diagonal argument, taking a subsequence we can assume that 
  $s_Q^n\to s_Q$ and   (due to \change{we added "to"}  ${\Crr{compact}}$) that $a_Q^n$ 
converges in $\mathcal{B}(Q)$ and $L^p$ to some $a_Q\in \mathcal{A}_{\tilde{s},p}$.  By Lemma \ref{compa2} the sequence $g_n$ converge in $L^p$ to a function $g$ such that $|g|_{\mathcal{B}^{\tilde{s}}_{p,\tilde{q}}(\mathcal{A}_{\tilde{s},p})}\leq C$ and with $\mathcal{B}^{\tilde{s}}_{p,\tilde{q}}(\mathcal{A}_{s,p})$-representation
$$g=\sum_{k=0}^{\infty}\sum_{Q\in \SP^k}s_Q a_Q.$$
It remains to show that the convergence indeed occurs in the topology of  $\SB^s_{p,q}(\mathcal{A}_{s,p})$. For  every $k_0\geq 0$  and $\delta > 0$ we can write 
$$g_n-g=  \sum_{k< k_0} \sum_{Q\in \SP^k} \delta d_Q^{n} + \sum_{k\geq k_0} \sum_{Q\in \SP^k}  |Q|^{\tilde{s}-s} c_Q^{n} b_Q^{n},$$
where 
$$d_Q^{n}= \frac{1}{\delta}(s_Q^na_Q^n - s_Q a_Q),$$
and with $b_Q^{n}\in \mathcal{A}_{s,p}$ given by 
$$b_Q^{n} =   \frac{ |Q|^{s-\tilde{s}} |s_Q^n|}{|s_Q^n| + |s_Q|} sgn(s_Q^{n}) a_Q^{n}     +  \frac{ |Q|^{s-\tilde{s}} |s_Q|}{|s_Q^n| + |s_Q|}  sgn(-s_Q) a_Q,$$
and
$$c_Q^{n} = |s_Q^n| + |s_Q|.$$
Note that $b_Q^{n}  \in \mathcal{A}_{s,p}$.  Given $\epsilon > 0$, choose $k_0$ such that 
 $b_Q^{n}  \in \mathcal{A}_{s,p}$.  Given $\epsilon > 0$, choose $k_0$ such that 
$$ \Big(  \sum_{k\geq k_0}  |\mathcal{P}^k|^{q(\tilde{s}-s)} \Big)^{1/q}   (2C+1) \leq (\epsilon/2)^{1/\rho}.$$
By  (\ref{uyuy}) and (\ref{sed})  for each  $n$ large enough we have
$$
 \Big( \sum_{k\geq k_0} (\sum_{Q\in \SP^k}|c_Q^{n}|^p |Q|^{p(\tilde{s}-s)}\Big)^{q/p} \Big)^{1/q} \leq \Big(  \sum_{k\geq k_0}  |\mathcal{P}^k|^{q(\tilde{s}-s)} \Big)^{1/q}   (2C+1) < (\epsilon/2)^{1/\rho}.
$$
In particular
$$\big|  \sum_{k\geq k_0} \sum_{Q\in \SP^k}  |Q|^{\tilde{s}-s} c_Q^{n} b_Q^{n}\big|_{\SB^s_{p,q}(\mathcal{A}_{s,p})}^\rho < \epsilon/2.$$
Choose $\delta > $ such that 
$$
 \Big( \sum_{k<  k_0} (\sum_{Q\in \SP^k}\delta^p\Big)^{q/p} \Big)^{\rho/q}<   \epsilon/2.
$$

Due to \change{we added "to"}  ${\Crr{finite}}$  there is $\eta > 0$ such that for every $Q\in \mathcal{P}^k$, with $k< k_0$, if $h \in \mathcal{B}(Q)$ satisfies  $|h|_{\mathcal{B}(Q)}\leq \eta$ then $h \in  \mathcal{A}_{p,s}(Q)$. Since $\lim_n s_Q^na_Q^n=s_Q a_Q$ in $\mathcal{B}(Q)$ we conclude that for $n$ large enough we have 
$$    d_Q^{n}=\frac{1}{\delta}(s_Q^na_Q^n - s_Q a_Q) \in \mathcal{A}_{s,p}(Q)$$
for every  $Q\in \mathcal{P}^k$, with $k< k_0$. In particular
$$\big|\sum_{k< k_0} \sum_{Q\in \SP^k} \delta d_Q^{n}\big|_{\SB^s_{p,q}(\mathcal{A}_{s,p})}^\rho<  \epsilon/2.$$
We conclude that
$$| g_n-g|_{\SB^s_{p,q}(\mathcal{A}_{s,p})}^\rho<  \epsilon,$$
for $n$ large enough, so the sequence $g_n$  converges (due to \change{we added "to"}   Corollary \ref{compa1})  to $g$ in the topology of $\SB^s_{p,q}(\mathcal{A}_{s,p})$.

\end{proof}

\section{Transmutation of atoms} 
It turns out that sometimes a Besov-ish space can be obtained using different classes of atoms. The key result in Part I is the following

\begin{figure}
\includegraphics[scale=0.6]{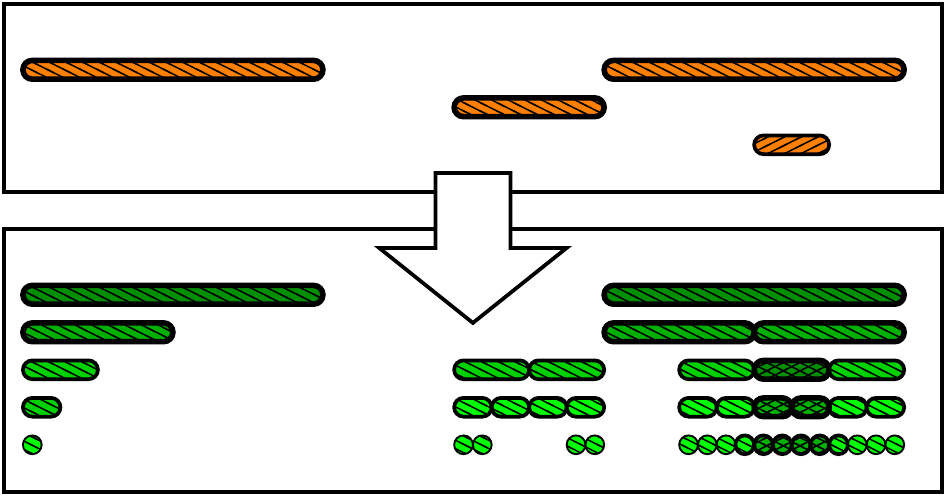}
\caption{ In the transmutation of atoms we replace  atoms whose supports are represented in orange (supports with distinct scales are drawn in distinct lines for a better understanding) by a  linear combination of atoms  (whose supports are represented  in green), in such way that  we  do not change too much  the location of the support and also  most of the  "mass" of the representation concentrates  on the same original scale. }
\end{figure}

\begin{proposition}[Transmutation of atoms] \label{trans} Assume  \\ 

\begin{itemize}
\item[{\bf I.}] Let  $\mathcal{A}_2$    be  a class of $(s,p,u_2)$-atoms  for  a grid  $\mathcal{W}$, satisfying  ${\Crr{banach}}$-${\Crr{atom}}$ and ${\Crr{fin}}$-${\Crr{sum}}$. Let $\mathcal{G}$ be also a  grid satisfying ${\Crr{fin}}$-${\Crr{sum}}$.
\item[{\bf II.}] Let $k_i\in \mathbb{N}$ for $i\geq 0$ be a sequence such that there is  $\alpha > 0 $ and $A, B\in \mathbb{R}$ satisfying
$$ \alpha i +A    \leq   k_i\leq \alpha i +B $$
for every $i$.
\item[{\bf III.}] There is $\lambda \in(0,1)$ such that the following holds. For every $Q\in \mathcal{G}$ and $P\in \mathcal{W}$ satisfying $P\subset Q$ there  are atoms $b_{P,Q} \in \mathcal{A}_2(P)$ and  corresponding $s_{P,Q}\in \mathbb{C}$ such that 
$$h_Q=\sum_k  \sum_{P\in \mathcal{W}^k, P\subset Q} s_{P,Q} b_{P,Q}.$$
is a  $\mathcal{B}^s_{p,q}(\mathcal{A}_2)$-representation of a function $h_Q$, with $s_{P,Q}=0$ for every $Q\in \mathcal{G}^i$,  $P \in \mathcal{W}^k$ with $k < k_i$  and moreover
\begin{equation}\label{ad}  \sum_{P\in \mathcal{W}^k, P \subset Q} |s_{P,Q}|^p   \leq \Cll{rf}  \lambda^{k-k_i}.\end{equation}
for every $k\geq k_i$.  \\ \\
\end{itemize}
Let
$$\mathcal{H}^k= \bigcup_{Q\in \mathcal{G}} \{ P \subset Q\colon \ P\in \mathcal{W}^k \ and  \ s_{P,Q}\neq 0      \}. $$
Then  \\ \\
\begin{itemize}
\item[{\bf A.}] For every coefficients $(c_Q)_{Q\in \mathcal{G}}$ such that 
$$\big( \sum_i  \big( \sum_{Q\in \mathcal{G}^i}  |c_{Q}|^p  \big)^{q/p}\big)^{1/q}<\infty$$
we have that the sequence
\begin{equation}\label{seq33} N\mapsto \sum_{i\leq N} \sum_{Q\in \mathcal{G}^i }c_Q h_Q\end{equation}
converges in $L^p$ to a function in $\mathcal{B}^s_{p,q}(\mathcal{A}_2)$ that has a $\mathcal{B}^s_{p,q}(\mathcal{A}_2)$-representation 
\begin{equation}\label{cs} \sum_k \sum_{P\in \mathcal{H}^k}     m_P d_P\end{equation}
where $m_P \geq 0$ for every $P$ and 
\begin{eqnarray} \label{igual1} &&\big(\sum_k \big( \sum_{\substack{ P \in \mathcal{W}^k\\ P\in \mathcal{H}}} |m_{P}|^p \big)^{q/p}\big)^{1/q}  \nonumber \\ 
&\leq& \Crr{mult1}  \Crr{rf}^{1/p}\lambda^{-\frac{B}{p}} \Crr{co2}(p,q,b) \Crr{m1}^{1/q} \big( \sum_i  \big( \sum_{Q\in \mathcal{G}^i}  |c_{Q}|^p  \big)^{q/p}\big)^{1/q}
\end{eqnarray}
Here $\Cll{m1}=\max \{\ell \in \mathbb{N}, \ell < \alpha\}+1$ and $b=(b_n)_{n\in \mathbb{Z}}$ is defined  by 
$$b_n = \begin{cases}    \lambda^{ \alpha n} &   \text{ if } n>   \frac{A}{\alpha} - 1,\\
0  &   \text{ if } n\leq    \frac{A}{\alpha} - 1,
 \end{cases}
 $$

\item[{\bf B.}] Suppose that the   assumptions of A. hold and that  $s_{P,Q}$ are non negative real numbers and $b_{P,Q} > 0$  on $P$ for every   $P, Q$.   Then $m_P\neq 0$ and $d_P \neq  0$ on $P$ imply that  $P\subset  supp \ h_Q$ for some $Q\in \mathcal{W}^k$ satisfying $c_Q \neq 0$ and $s_{P,Q}  >0$.  If we additionally  assume that $c_Q \geq 0$ for every $Q$ then $m_P\neq 0$ also   implies $d_P >  0$ on $P$.
\item[{\bf C.}] Let $\mathcal{A}_1$     be a   class of $(s,p,u_1)$-atoms for  the  grid  $\mathcal{G}$ satisfying  ${\Crr{banach}}$-${\Crr{atom}}$. Suppose that there is $\lambda < 1$ such that   for every atom $a_Q\in \mathcal{A}_1(Q)$ we can find $s_{P,Q}$ and $b_{P,Q}$ in III. such that  $h_Q=a_Q$. Then  $$\mathcal{B}^s_{p,q}(\mathcal{A}_1)\subset \mathcal{B}^s_{p,q}(\mathcal{A}_2)$$ and this inclusion is continuous. Indeed
$$|\phi|_{\mathcal{B}^s_{p,q}(\mathcal{A}_2)} \leq  \Crr{mult1}  \Crr{rf}^{1/p}\lambda^{-\frac{B}{p}} \Crr{co2}(p,q,b) \Crr{m1}^{1/q} |\phi|_{\mathcal{B}^s_{p,q}(\mathcal{A}_1)}$$
for every  $\phi \in \mathcal{B}^s_{p,q}(\mathcal{A}_1).$ 
\end{itemize}
\end{proposition}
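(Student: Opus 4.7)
The plan is to expand each $h_Q$ via its given representation, swap the order of summation to regroup by the supports $P\in\mathcal{W}$, and then control the resulting coefficients using the decay hypothesis (\ref{ad}) together with the convolution trick of Proposition \ref{young}. Inserting
\begin{equation*}
h_Q=\sum_{k\geq k_i}\sum_{\substack{P\in\mathcal{W}^k\\ P\subset Q}}s_{P,Q}b_{P,Q}
\end{equation*}
into $\sum_i\sum_{Q\in\mathcal{G}^i}c_Q h_Q$ and interchanging the sums produces, for each $P\in\mathcal{W}^k$, a finite inner combination $\sum_{Q\supset P}c_Q s_{P,Q}b_{P,Q}$; the sum is finite because $\Crr{fin}$ applied to $\mathcal{G}$ implies that, for any $P$ and any $i$, at most $\Crr{mult1}$ elements of $\mathcal{G}^i$ contain $P$ (any two of them would intersect through $P$). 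Setting $m_P=\sum_{Q\supset P}|c_Q s_{P,Q}|$ and defining $d_P$ as the corresponding sign-corrected convex combination of the atoms $b_{P,Q}$, property $\Crr{convex}$ guarantees $d_P\in\mathcal{A}_2(P)$.

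The analytic heart is to bound $v_k:=(\sum_{P\in\mathcal{W}^k}m_P^p)^{1/p}$ in terms of $u_i:=(\sum_{Q\in\mathcal{G}^i}|c_Q|^p)^{1/p}$. Using Minkowski's inequality in $\ell_p$ (for $p\geq 1$) or the quasi-triangle inequality $|x+y|^p\leq |x|^p+|y|^p$ (for $p<1$), combining the $\Crr{mult1}$-bound above, and then invoking (\ref{ad}) on each $Q\in\mathcal{G}^i$, I establish a pointwise estimate of the form
\begin{equation*}
v_k^{p/\hat{p}}\leq C^{1/\hat{p}}\sum_{i\colon k_i\leq k}\lambda^{(k-k_i)/\hat{p}}\,u_i^{p/\hat{p}},\qquad C=\Crr{mult1}^{\max(p-1,0)}\Crr{rf}.
\end{equation*}
To cast this in the translation-invariant form required by Proposition \ref{young}, I use $k_i\leq\alpha i+B$ to replace $\lambda^{k-k_i}$ by $\lambda^{-B}\lambda^{k-\alpha i}$, then spread each input $u_i^p$ onto the (at most $\Crr{m1}$) consecutive integer positions in the window $[\alpha i,\alpha(i+1))$, producing a rescaled sequence $\tilde c_n$ on $\mathbb{Z}$ whose $\ell^{q/p}$-norm is controlled by $\Crr{m1}^{1/q}(\sum_i u_i^q)^{1/q}$. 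The effective kernel in the new convolution variable becomes $b_n=\lambda^{\alpha n}$ restricted to $n>A/\alpha-1$ (coming from the condition $k_i\leq k$), exactly as in the statement. Proposition \ref{young} applied with these choices delivers (\ref{igual1}), while Proposition \ref{lp} (with $t=p$) guarantees $L^p$-convergence of both the partial sums (\ref{seq33}) and the reorganized series (\ref{cs}) to the same limit in $\mathcal{B}^s_{p,q}(\mathcal{A}_2)$.

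For part B the positivity assertions are immediate from the construction: if all $c_Q$ and $s_{P,Q}$ are non-negative and every $b_{P,Q}>0$ on $P$, then $m_P\neq 0$ forces at least one summand $c_Q s_{P,Q}$ to be strictly positive, which in turn forces $P\subset\mathrm{supp}\,h_Q$ for a $Q\in\mathcal{G}$ with $c_Q\neq 0$ and $s_{P,Q}>0$; if additionally $c_Q\geq 0$, then $d_P$ is a genuine convex combination of functions strictly positive on $P$, so $d_P>0$ on $P$. Part C is a direct consequence of A applied to a general $\mathcal{B}^s_{p,q}(\mathcal{A}_1)$-representation $\phi=\sum_{i,Q}c_Q a_Q$, with the role of $h_Q$ played by the given atom $a_Q\in\mathcal{A}_1(Q)$; taking the infimum over all such representations yields the claimed norm inequality. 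The main obstacle is the convolution bookkeeping in the second paragraph: matching the incommensurate scales of $\mathcal{G}$ and $\mathcal{W}$ (encoded by $\alpha$, $A$, $B$) to the translation-invariant form demanded by Proposition \ref{young} requires the careful reindexing sketched above, and the factors $\lambda^{-B/p}$ and $\Crr{m1}^{1/q}$ appearing in (\ref{igual1}) are precisely the price paid for this rescaling.
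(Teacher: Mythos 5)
Your proposal is correct and follows essentially the same route as the paper: expand each $h_Q$, interchange sums, set $m_P=\sum_{Q\supset P}|c_Qs_{P,Q}|$ with $d_P$ the associated convex combination (an atom by $\Crr{convex}$), derive the almost-convolution inequality from (\ref{ad}), and reindex via $k\approx\alpha i$ (paying $\lambda^{-B/p}$ and $\Crr{m1}^{1/q}$) before applying Proposition \ref{young}. The only place you are slightly thinner than the paper is the passage from the truncated sums $\sum_{i\le N}$ to $N=\infty$, which the paper handles by noting that $N\mapsto m_{P,N}$ and $N\mapsto d_{P,N}$ are eventually constant and invoking Proposition \ref{compa2}; your appeal to Proposition \ref{lp} covers the convergence but that extra observation is what identifies the two limits.
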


\changei{In the caption of the figure 1 we replaced "are concentrated" by "concentrates"}
\begin{proof} 
For every $P\in \mathcal{H}^k$, with $k\in \mathbb{N}$,  and $N\in \mathbb{N}\cup \{\infty\}$ define
$$m_{P,N}=\sum_{i\leq N} \sum_{\substack{Q\in \mathcal{G}^i\\ P \subset Q}} |c_Q s_{P,Q}|=\sum_{\substack{ k_i\leq k \\ i\leq N}} \sum_{ \substack{Q\in \mathcal{G}^i\\ P \subset Q}} |c_Q s_{P,Q}|$$
Due to \change{we added "to"}  $\Crr{sum}$ this sum has a finite number of terms. If this sum has  zero terms define $m_{P,N}=0$ and let $d_{P,N}$ be  the zero function. Otherwise define 
\begin{equation}\label{from} d_{P,N}= \frac{1}{m_{P,N}}\sum_{i\leq N} \sum_{\substack{Q\in \mathcal{G}^i\\ P \subset Q}} c_Q s_{P,Q} b_{P,Q}.\end{equation} 
We have that $d_{P,N}$  is an $\mathcal{A}_2(P)$-atom.  
\vspace{5mm} 

\noindent {\bf Claim I.} {\it We claim that for $N\in \mathbb{N}$}
 $$\sum_{k}   \sum_{P\in \mathcal{H}^k} m_{P,N}  d_{P,N} = \sum_{i\leq N} \sum_{Q\in \mathcal{G}^i}c_Q h_Q.   $$
Note that if $Q\in \mathcal{G}^i$ then due to \change{we added "to"}  (\ref{flp}), with $t=p$  and (\ref{ad})  we have
$$\sum_k  \big| \sum_{\substack{P\in \mathcal{H}^k\\ P\subset Q}} s_{P,Q} b_{P,Q}\big|_p^{p/\hat{p}} < \infty.$$
%$$|h_Q|_p \leq  \Crr{kt} \Big(\sum_{k\geq k_i} \Crr{rf}^{q/p} \lambda^{(k-k_i)q/p} \Big)^{1/q}\leq \Crr{kt} \Big(\sum_{j} \Crr{rf}^{q/p} \lambda^{jq/p} \Big)^{1/q}.$$
Consequently we can do the following manipulation in $L^p$
\begin{eqnarray*}
 \sum_{i\leq N} \sum_{Q\in \mathcal{G}^i} c_Q h_Q&=& \sum_{i\leq N} \sum_{Q\in \mathcal{G}^i}  \sum_k  \sum_{\substack{ P\in \mathcal{H}^k\\  P\subset Q}} s_{P,Q} b_{P,Q}\\
 &=& \sum_k \sum_{ P\in \mathcal{H}^k}    \sum_{i\leq N} \sum_{Q\in \mathcal{G}^i}   \sum_{P\subset Q} s_{P,Q} b_{P,Q}\\
 &=& \sum_k \sum_{ P\in \mathcal{H}^k}   m_{P,N}  d_{P,N}.
 \end{eqnarray*}      
This concludes the proof of Claim I. 
\vspace{5mm}

\noindent {\bf Claim II.} {\it For every $N\in \mathbb{N}\cup \{\infty\}$ we claim that 
\begin{equation}\label{a222}  \sum_k \sum_{P\in \mathcal{H}^k} m_{P,N}  d_{P,N}\end{equation}
is a $\mathcal{B}^s_{p,q}(\mathcal{A}_2)$-representation and }
\begin{align}&   \big(\sum_k \big( \sum_{ P\in \mathcal{H}^k} |m_{P,N}|^p \big)^{q/p}\big)^{1/q} \nonumber \\
\label{ess} &\leq \Crr{mult1}  \Crr{rf}^{1/p}\lambda^{-\frac{B}{p}} \Crr{co2}(p,q,b) \Crr{m1}^{1/q}\big( \sum_i  \big( \sum_{Q\in \mathcal{G}^i}  |c_{Q}|^p  \big)^{q/p}\big)^{1/q}.
\end{align}
Indeed 
\begin{align*} 
\Big( \sum_{P \in \mathcal{H}^k} |m_{P,N}|^p \Big)^{1/\hat{p}}&= \Big(  \sum_{P \in \mathcal{W}^k} \big( \sum_{\substack{ k_i\leq k \\ i\leq N}} \sum_{ \substack{Q\in \mathcal{G}^i\\ P \subset Q}} |c_Q s_{P,Q}|\big)^p \Big)^{1/\hat{p}} \\
&\leq  \sum_{\substack{ k_i\leq k \\ i\leq N}}  \big( \sum_{P \in \mathcal{W}^k} \big( \sum_{ \substack{Q\in \mathcal{G}^i\\ P \subset Q}} |c_Q s_{P,Q}|\big)^p \big)^{1/\hat{p}} \\
&\leq \Crr{mult1}^{p/\hat{p}} \sum_{\substack{ k_i\leq k \\ i\leq N}}  \big( \sum_{P \in \mathcal{W}^k} \sum_{ \substack{Q\in \mathcal{G}^i\\ P \subset Q}} |c_Q|^p |s_{P,Q}|^p \big)^{1/\hat{p}} \\
&\leq \Crr{mult1}^{p/\hat{p}}  \sum_{\substack{ k_i\leq k \\ i\leq N}}  \big(\sum_{Q\in \mathcal{G}^i}   |c_Q|^p  \sum_{\substack{P \in \mathcal{W}^k\\ P \subset Q}}  |s_{P,Q}|^p \big)^{1/\hat{p}} \\
&\leq  \Crr{mult1}^{p/\hat{p}}   \Crr{rf}^{1/\hat{p}}  \sum_{\substack{k_i\leq k \\ i\leq N}}  \lambda^{(k-k_i)/\hat{p}}   \big( \sum_{Q\in \mathcal{G}^i}  |c_{Q}|^p  \big)^{1/\hat{p}} \\
&\leq  \Crr{mult1}^{p/\hat{p}}   \Crr{rf}^{1/\hat{p}}  \sum_{\substack{\alpha i + A\leq k}}  \lambda^{(k-\alpha i -B)/\hat{p}}   \big( \sum_{Q\in \mathcal{G}^i}  |c_{Q}|^p  \big)^{1/\hat{p}} . \numberthis \label{for}
%&\leq  C^{1/p}  \sum_{\substack{\alpha i \leq k-A}}  \lambda^{(k-A-\alpha i +A-B)/p}   \big( \sum_{\substack{Q \in \mathcal{P}^i\\ Q\in \mathcal{G}}}  |c_{Q}|^p  \big)^{1/p}  \\
%&\leq  C^{1/p}\lambda^{(A-B)/p}  ([\alpha]+[1/\alpha]+1) \sum_{\substack{\alpha i \leq \alpha j}}  \lambda^{(\alpha j-\alpha i)/p}   \big( \sum_{\substack{Q \in \mathcal{P}^i\\ Q\in \mathcal{G}}}  |c_{Q}|^p  \big)^{1/p} \\
%&\leq  C^{1/p}\lambda^{(A-B)/p}   \sum_{\substack{\alpha i \leq j}}  \lambda^{(j-\alpha i)/p}   \big( \sum_{\substack{Q \in \mathcal{P}^i\\ Q\in \mathcal{G}}}  |c_{Q}|^p  \big)^{1/p} 
\end{align*}
If $\alpha=1$ and $A=B=0$ then this is a convolution, so we can use Proposition \ref{young} (the convolution trick)  and it easily follows (\ref{igual1}). In the general case, consider
$$u_k=\sum_{P \in \mathcal{H}^k} |m_{P,N}|^p  \text{  and }  c_i=\sum_{Q\in \mathcal{G}^i}  |c_{Q}|^p$$
Every $k\in \mathbb{N}$ can be  written in an {\it unique  way} as $k=\alpha j_k+ \ell_k+r_k$, with $j_k \in \mathbb{N}$, $\ell_k \in \mathbb{N}$, $\ell_k+r_k < \alpha$ and $r_k \in [0,1)$.   Fix  $\ell \in [0,\alpha)\cap\mathbb{N}$ and $j\in \mathbb{N}$. Then there is at most one  $k'\in \mathbb{N}$ such that $\ell_k=\ell$ and $j_k=j$. Indeed, if $k'= \alpha j + \ell+r'$ and $k''= \alpha j + \ell+r''$, with $r',r''\in [0,1)$ and $\ell+r'$ and $\ell+r''$ smaller than $\alpha$, then $k'-k''=r'-r''\in (-1,1)$, so $k'=k''$ and $r'=r''$. If such $k'$ exists, denote $k(\ell,j)=k'$ and $r(\ell,j)= k(\ell,j)-\alpha j -\ell$ and $a_{\ell,j}=u_{k(\ell,j)}$. Otherwise let $a_{\ell,j}=0$. Then (\ref{for}) implies 
\begin{align*}
a_{\ell,j}^{1/\hat{p}}&\leq \Crr{mult1}^{p/\hat{p}}   \Crr{rf}^{1/\hat{p}}  \sum_{\substack{\alpha i + A\leq  \alpha j+ \ell+r(\ell,j)}}  \lambda^{( \alpha j+ \ell+r(\ell,j)-\alpha i -B)/\hat{p}} c_i^{1/\hat{p}}  \\
&\leq \Crr{mult1}^{p/\hat{p}}   \Crr{rf}^{1/\hat{p}} \lambda^{-B/\hat{p}}  \sum_{\substack{ i \leq   j+ \frac{-A+\ell+r(\ell,j)}{\alpha} }  }  \lambda^{ \alpha (j- i)/\hat{p}} c_i^{1/\hat{p}}\\
&\leq  \Crr{mult1}^{p/\hat{p}}   \Crr{rf}^{1/\hat{p}}   \lambda^{-B/\hat{p}}   \sum_{\substack{ i <    j+ \frac{-A}{\alpha} +1}}  \lambda^{ \alpha (j- i)/\hat{p}} c_i^{1/\hat{p}}\\
&\leq \Crr{mult1}^{p/\hat{p}}   \Crr{rf}^{1/\hat{p}}  \lambda^{-B/\hat{p}}   \sum_{i \in \mathbb{Z} }  b_{j-i}^{1/\hat{p}} c_i^{1/\hat{p}}
\end{align*}
Here $b_n= \lambda^{ \alpha n} $ if  $n> A/\alpha - 1$, and $b_n=0$ otherwise. Fixing $\ell \in \mathbb{N}, \ \ell< \alpha$, Proposition \ref{young} (the convolution trick) gives us 
%$$\big(\sum_{\substack{k\in \mathbb{N} \\ \ell_k=\ell}}  a_{k}^{q}\big)^{1/q} =\big(\sum_{j} u_{\ell,j}^{q}\big)^{1/q}\leq \frac{ \Crr{mult1}  \Crr{rf}^{1/p}\lambda^{\frac{\alpha \Crr{m2} -B}{p}} }{1-\lambda^{\alpha/p}}\big( \sum_i  \big( \sum_{Q\in \mathcal{G}^i}  |c_{Q}|^p  \big)^{q/p}\big)^{1/q}.$$
\begin{eqnarray*} K_\ell= \big(\sum_{\substack{k\in \mathbb{N} \\ \ell_k=\ell}}  u_{k}^{q/p}\big)^{1/q} &=&\big(\sum_{j} a_{\ell,j}^{q/p}\big)^{1/q}\\
&\leq&  \Crr{mult1}  \Crr{rf}^{1/p}\lambda^{-\frac{B}{p}} \Crr{co2}(p,q,b) \big( \sum_i  \big( \sum_{Q\in \mathcal{G}^i}  |c_{Q}|^p  \big)^{q/p}\big)^{1/q}.
\end{eqnarray*} 
and
\begin{align} \label{bbbb} \big(\sum_k \big( \sum_{ P\in \mathcal{H}^k} |m_{P,N}|^p \big)^{q/p}\big)^{1/q}&=  \big(\sum_k u_k^{q/p}\big)^{1/q}  \nonumber \\
&=  \big(\sum_{0\leq \ell < \alpha} \sum_{\substack{k\in \mathbb{N} \\ \ell_k=\ell}}  u_{k}^{q/p}\big)^{1/q}=  \big(\sum_{0\leq \ell < \alpha}K_\ell^q \big)^{1/q}\nonumber \\
 &\leq \Crr{mult1}  \Crr{rf}^{1/p}\lambda^{-\frac{B}{p}} \Crr{co2}(p,q,b) \Crr{m1}^{1/q}  \big( \sum_i  \big( \sum_{Q\in \mathcal{G}^i}  |c_{Q}|^p  \big)^{q/p}\big)^{1/q}.
\end{align}
This implies in particular that the sum  in (\ref{a222}) is a $\mathcal{B}^s_{p,q}(\mathcal{A}_2)$-representation. This proves Claim II.  
 \vspace{5mm}
  
  \noindent {\bf Claim III.} {\it We have that in the strong topology of  $L^p$
$$\lim_{N\rightarrow \infty}  \sum_k \sum_{P\in \mathcal{H}^k} m_{P,N}  d_{P,N} =\sum_k \sum_{P\in \mathcal{H}^k} m_{P,\infty}  d_{P,\infty}.   $$}
For each $P\in \mathcal{H}$ the sequence
\begin{equation}\label{seqdf} N\mapsto m_{P,N}\end{equation} 
is eventually constant, therefore convergent. The same happens with 
\begin{equation}\label{seqdf2} N\mapsto d_{P,N}.\end{equation}

\noindent Estimate (\ref{ess}) and Proposition \ref{compa2} imply that  that (\ref{seq33}) converges in $L^p$ to a function    with $\mathcal{B}^s_{p,q}(\mathcal{A}_2)$-representation  (\ref{a222}) with $N=\infty$. This concludes the proof of Claim III. 

Then Claim I, II, and III imply A. taking $m_p=m_{P,\infty}$ and $d_P=d_{P,\infty}.$ We have that  C. is an immediate consequence of A.  Note that (\ref{from}) and A. give B.
\end{proof}
\vspace{1cm}

\section{Good grids} \label{goodgrids} A $(\Cll[c]{menor},\Cll[c]{maior})$-{\bf good  grid} , with $0< \Crr{menor}<  \Crr{maior} <  1$, is a grid  $\mathcal{P}= (\mathcal{P}^k)_{k\in \mathbb{N}}$  with the following properties:
\begin{itemize}
\item[${\Cll[G]{g2}}.$] We have $\mathcal{P}^0=\{I\}$.
\item[${\Cll[G]{g3}}.$] We have $I=\cup_{Q \in \mathcal{P}^k} Q$  (up to a set of zero $m$-measure).
\item[${\Cll[G]{g4}}.$] The elements of the  family $\{  Q \}_{Q\in \mathcal{P}^k} $ are pairwise disjoint. 
%\item[G3.] For every $Q\in \mathcal{P}^k$ we have $|Q\setminus \ int \ Q|=0$. 
\item[${\Cll[G]{g5}}.$] For every $Q\in \mathcal{P}^k$ and $k > 0$ there exists $P \in \mathcal{P}^{k-1}$ such that $Q\subset P$.  
\item[${\Cll[G]{g6}}.$] We have
$$\Crr{menor} \leq \frac{|Q|}{|P|} \leq \Crr{maior}$$
for every $Q\subset P$ satisfying  $Q\in \mathcal{P}^{k+1}$ and $P\in \mathcal{P}^{k}$ for some $k\geq 0$. 
\item[${\Cll[G]{g7}}.$]  The family $\cup_k \mathcal{P}^k$ generate the  $\sigma$-algebra $\mathbb{A}$. 
\end{itemize}

\section{Induced  spaces} 
Consider a Besov-ish space $\mathcal{B}^s_{p,q}(I,\mathcal{P}, \mathcal{A})$, where $\mathcal{P}$ is a  good grid. Given $Q\in \mathcal{P}^{k_0}$, we can  consider the sequence of finite families of subsets $\mathcal{P}_Q=(\mathcal{P}_Q^i)_{i\geq 0}$ of $Q$  given by 
$$\mathcal{P}^i_Q=\{ P\in \mathcal{P}^{k_0+i}, P\subset Q\}.$$
Let $\mathcal{A}_Q$ be the restriction of the indexed family $\mathcal{A}$ of pairs   $(\mathcal{B}(P),\mathcal{A}(P))_{P\in \mathcal{P}}$ to indices  belonging to $\mathcal{P}_Q$. Then we can consider the {\bf induced} Besov-ish space $\mathcal{B}^s_{p,q}(Q,\mathcal{P}_Q, \mathcal{A}_Q)$. Of course the inclusion
$$i\colon \mathcal{B}^s_{p,q}(Q,\mathcal{P}_Q, \mathcal{A}_Q) \rightarrow \mathcal{B}^s_{p,q}(I,\mathcal{P}, \mathcal{A})$$
is well-defined \change{we replaced "well defined"by "well-defined"} and  it is a weak contraction, that is
$$|f|_{\mathcal{B}^s_{p,q}(I,\mathcal{P}, \mathcal{A})}\leq |f|_{\mathcal{B}^s_{p,q}(Q,\mathcal{P}_Q, \mathcal{A}_Q)}.$$
Under the degree of generality we are considering here, the {\bf restriction } transformation
$$r\colon \mathcal{B}^s_{p,q}(I,\mathcal{P}, \mathcal{A}) \rightarrow L^p$$
given by $r(f)= f\cdot 1_Q$, is a bounded linear transformation, however it is easy to find examples of Besov-ish spaces where $f 1_Q \not\in \mathcal{B}^s_{p,q}(Q,\mathcal{P}_Q, \mathcal{A}_Q).$

\section{Examples of classes of atoms.}\label{secatom}

There are many classes of atoms one may consider. We list here just  a few of them.

\subsection{Souza's  atoms}\label{souzaa}

Let $Q\in \mathcal{P}$. A {\bf $(s,p)$-Souza's atom}  supported  on $Q$ is a function $a\colon I \rightarrow \mathbb{C}$ such that $a(x)=0$ for every $x \not\in Q$ and $a$ is constant on $Q$, with 
$$|a|_\infty\leq |Q|^{s-1/p}.$$

The set of Souza's atoms supported  on $Q$ will be denoted by $\mathcal{A}^{sz}_{s,p}(Q).$ A {\bf canonical Souza's atom} on $Q$ is the Souza's atom such that  $a(x)= |Q|^{s-1/p}$ for every $x \in Q$.  Souza's atoms are $(s,p,\infty)$-type atoms.

%\begin{proposition}\label{limlp} Let $g_n \in \mathcal{B}^s_{p,q}$ be such that $||g_n||_{\mathcal{B}^s_{p,q}}\leq C$ for every $n$ and $g_n$ converges in $L^p$  to some $g \in L^p$. Then $g \in \mathcal{B}^s_{p,q}$ and  $||g||_{\mathcal{B}^s_{p,q}}\leq C.$
%\end{proposition}

%\begin{proposition}\label{compa2} We have $\SB^{0}_{p,1}(\mathcal{A}^{sz}_{0,p})=L^p(m).$ 
%\end{proposition}

%From now on we reserve the notation $\mathcal{B}^s_{p,q}$ for the space $\mathcal{B}^s_{p,q}(\mathcal{A}^{sz}_{s,p})$.

 \subsection{H\"older atoms} \label{hatoms} Suppose that $I$ is a quasi-metric space with a quasi-distance $d(\cdot,\cdot)$, such that every $Q\in \mathcal{P}$ is a bounded set and there is $\Cll[c]{hh},\Cll[c]{hhh} \in (0,1)$ such that
$$\Crr{hh} \leq \frac{diam \ P}{diam \ Q} \leq \Crr{hhh}$$
for every $P\subset Q$ with $P \in \mathcal{P}^{k+1}$ and $Q \in \mathcal{P}^{k}$.  Additionally assume  that there is $\Cll{kki}\geq 0$ and $D\geq 0$ such that 
$$\frac{1}{\Crr{kki}^{D}}|Q| \leq (diam \ Q)^D \leq \Crr{kki}^{D} |Q|.$$

Let 
$$0< s <  \frac{1}{p}, \ s < \beta.$$   For every $Q\in \mathcal{P}$, Let  $C^\alpha(Q)$ be the Banach space of all functions  $\phi$ such that $\phi(x)=0$ for $x \not\in Q$, and   
$$|\phi|_{C^\alpha(Q)}= |\phi|_\infty + \sup_{\substack{x,y \in Q\\ x\neq y}} \frac{|\phi(x)-\phi(y)|}{d(x,y)^\alpha} < \infty.$$
Let $\mathcal{A}^{h}_{s,\beta,p}(Q) \subset C^{\beta D}(Q)$ be the convex subset of all functions $\phi$ satisfying 
$$\sup_{\substack{x,y \in Q\\ x\neq y}} \frac{|\phi(x)-\phi(y)|}{d(x,y)^{D \beta}} \leq  |Q|^{s-1/p-\beta} \   and \ |\phi|_{\infty} \leq |Q|^{s-1/p}.$$
We say that $\mathcal{A}^{h}_{s,\beta,p}(Q)$ is the   set of {\bf $(s,\beta,p)$-H\"older atoms } supported on $Q$. Of course $\mathcal{A}^{h}_{s,\beta,p}$-atoms are  $(s,p,\infty)$-type atoms and $\mathcal{A}^{sz}_{s,p}(Q)\subset \mathcal{A}^{h}_{s,\beta,p}(Q)$.

\subsection{Bounded variation atoms}  Now suppose that  $I$ is an interval of $\mathbb{R}$ with  length $1$, $m$ is the Lebesgue measure on it and the partitions in the  grid $\mathcal{P}$ are partitions by intervals.  
Let $Q$ be an interval and $s \leq   \beta$, $p\in [1,\infty)$.  A $(s,\beta,p)$-bounded variation  atom on $Q$ is a function $a\colon \mathbb{R} \rightarrow \mathbb{C}$ such that $a(x)=0$ for every $x \not\in Q$,
$$|a|_\infty\leq |Q|^{s-1/p}.$$ 
and
$$var_{1/\beta}(a,Q) \leq |Q|^{s-1/p}.$$ 
Here $var_{1/\beta}(\cdot,Q)$ is the pseudo-norm 
$$var_{1/\beta}(a,Q) = \sup (\sum_i |a(x_{i+1})-a(x_i)|^{1/\beta})^{\beta},$$
where the sup runs over all possible sequences $x_1 < x_1 < \dots < x_n$, with $x_i$ in the  interior of $Q$.
We will denote the set of  bounded variation  atoms on  $Q$ as $\mathcal{A}^{bv}_{s, p,\beta}(Q)$. Bounded variation atoms are also $(s,p,\infty)$-type atoms.
\vspace{1cm}

\newpage

\centerline{ \bf II. SPACES DEFINED BY  SOUZA'S ATOMS.}
\addcontentsline{toc}{chapter}{\bf II. SPACES DEFINED BY  SOUZA'S ATOMS.}
\vspace{1cm}

\fcolorbox{black}{white}{
\begin{minipage}{\textwidth}
\noindent   In Part II. we  suppose that $s> 0$, $p\in [1,\infty)$ and $q\in [1,\infty]$. \end{minipage} 
}
\ \\ \\

\section{Besov spaces in a measure space with a good grid}

 We will study the Besov-ish spaces  $\mathcal{B}^s_{p,q}(\mathcal{P},\mathcal{A}^{sz}_{s,p})$ associated with the measure space with a good grid $(I, \mathcal{P},m)$. We denote  $\mathcal{B}^s_{p,q}=\mathcal{B}^s_{p,q}(\mathcal{P},\mathcal{A}^{sz}_{s,p})$. Note that $\mathcal{A}^{sz}_{s,p}$ satisfies ${\Crr{banach}}$-${\Crr{finite}}$. Note that by Proposition \ref{lp} there is $\beta > 1$ such that $\mathcal{B}^s_{p,q} \subset L^\beta$.
 
If  $p\in [1,\infty)$, $q\in [1,\infty]$ and  $0< s< \frac{1}{p}$ we wil say that  $\mathcal{B}^s_{p,q}$ is a   {\bf Besov space}.

\section{Positive cone}

We say that $f$ is $\mathcal{B}^s_{p,q}$-positive if there is a $\mathcal{B}^s_{p,q}$-representation
$$f= \sum_k \sum_{P\in \mathcal{P}^k} c_Pa_P$$
where $c_P\geq 0$ and $a_P$ is the standard $(s,p)$-Souza's atom supported on $P$. The set of all $\mathcal{B}^s_{p,q}$-positive functions is a convex cone in $\mathcal{B}^s_{p,q}$, denoted $\mathcal{B}^{s+}_{p,q}$. We can define a ``norm" on $\mathcal{B}^{s+}_{p,q}$ as
$$|f|_{\mathcal{B}^{s+}_{p,q}}=\inf   \Big( \sum_k \big( \sum_{P\in \mathcal{P}^k}  c_P^p \big)^{q/p} \Big)^{1/q},$$
where the infimum runs over all possible $\mathcal{B}^s_{p,q}$-positive representations of $f$. Of course for every $f,g \in \mathcal{B}^{s+}_{p,q}$ and $\alpha \geq 0$ we have 
$$|\alpha f|_{\mathcal{B}^{s+}_{p,q}}=\alpha  | f|_{\mathcal{B}^{s+}_{p,q}}, \ |f+g|_{\mathcal{B}^{s+}_{p,q}}\leq  |f|_{\mathcal{B}^{s+}_{p,q}}+  |g|_{\mathcal{B}^{s+}_{p,q}}, \   | f|_{\mathcal{B}^{s}_{p,q}} \leq | f|_{\mathcal{B}^{s+}_{p,q}}.$$
Moreover if $f\in \mathcal{B}^s_{p,q}$   is a real-valued function then one can find $f_+,f_-\in \mathcal{B}^{s+}_{p,q}$ such that $f=f_+-f_-$ and $$|f_+|_{\mathcal{B}^{s+}_{p,q}} \leq |f|_{\mathcal{B}^{s}_{p,q}} \ and \  |f_-|_{\mathcal{B}^{s+}_{p,q}} \leq |f|_{\mathcal{B}^{s}_{p,q}}.$$

An obvious but important observation is

\begin{proposition} if $f \in \mathcal{B}^{s+}_{p,q}$ then its support    
$$supp \ f = \{ x\in I\colon \ f(x)\neq 0\}.$$
is (up to a set of zero measure) is a countable union of elements of $\mathcal{P}$. 
\end{proposition}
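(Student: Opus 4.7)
The plan is to fix a $\mathcal{B}^{s+}_{p,q}$-representation
$$f = \sum_{k=0}^{\infty}\sum_{P\in\mathcal{P}^k} c_P a_P, \qquad c_P\geq 0,$$
with each $a_P$ the canonical $(s,p)$-Souza's atom on $P$ (so $a_P = |P|^{s-1/p}\,\mathbf{1}_P$), and to prove that, up to a null set,
$$\operatorname{supp} f \;=\; S \;:=\; \bigcup\{P\in\mathcal{P} : c_P > 0\}.$$
This immediately gives the conclusion, because the grid $\mathcal{P} = \bigcup_k \mathcal{P}^k$ is a countable union of finite families by $\Crr{fin}$, hence countable, so $S$ is automatically a countable union of elements of $\mathcal{P}$.

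The substantive content is the identification $\{f\neq 0\} = S$ mod null. The key observation is positivity: since every summand $c_P a_P$ is nonnegative, the partial sums
$$f_N \;:=\; \sum_{k\leq N}\sum_{P\in\mathcal{P}^k} c_P a_P$$
form a monotonically nondecreasing sequence of nonnegative measurable functions. By the definition of a $\mathcal{B}^s_{p,q}(\mathcal{A})$-representation the series converges absolutely in $L^p$, so in particular $f_N \to f$ in $L^p$; passing to a subsequence yields a.e.\ convergence, and combined with monotonicity this upgrades to $f_N \to f$ a.e.\ on $I$. In particular $f\geq 0$ a.e., and
$$f(x) \;=\; \sum_{k=0}^{\infty}\sum_{P\in\mathcal{P}^k} c_P a_P(x) \qquad \text{for a.e. } x\in I,$$
as a sum of nonnegative terms.

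The two inclusions are now immediate. If $x\in I\setminus S$, every $P$ with $c_P>0$ misses $x$, hence $c_P a_P(x)=0$ for every $P$, so $f(x)=0$ at every point where the partial sums converge; thus $f=0$ a.e.\ on $I\setminus S$. Conversely, if $x\in P_0$ for some $P_0$ with $c_{P_0}>0$, then at any point of convergence
$$f(x) \;\geq\; c_{P_0} a_{P_0}(x) \;=\; c_{P_0}|P_0|^{s-1/p} \;>\; 0,$$
so $f>0$ a.e.\ on $S$. Combining the two, $\{f\neq 0\}$ and $S$ differ by a null set.

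There is essentially no obstacle: the only point worth checking carefully is the passage from $L^p$-convergence of the series to a.e.\ convergence of the partial sums, which is why positivity (giving monotonicity of $f_N$) is crucial — without it we would only get a subsequence converging a.e., which would not be enough to pin down $f$ pointwise a.e.\ from the full series. With monotonicity in hand, the a.e.\ identity $f_N\nearrow f$ is forced, and the rest is bookkeeping on $\mathcal{P}$.
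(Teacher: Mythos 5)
Your proof is correct, and it fills in exactly the details behind what the paper simply labels ``an obvious but important observation'' and leaves unproved: the positivity of the coefficients makes the partial sums monotone, which upgrades the subsequential a.e.\ convergence coming from $L^p$-convergence to genuine pointwise a.e.\ convergence of the full series, after which the identification of $\{f\neq 0\}$ with $\bigcup\{P: c_P>0\}$ (a countable union, since $\mathcal{P}$ is a countable union of finite families) is immediate. No gaps.
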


\section{Unbalanced Haar wavelets} \label{haar} Let $\mathcal{P}=(\mathcal{P}^k)_k$ be a good grid. For every $Q \in \mathcal{P}^k$ let $\Omega_Q=\{ P_1^Q,\dots,P_{n_Q}^Q\}$, $n_Q\geq 2$,  be  the family of  elements  $\mathcal{P}^{k+1}$ such that $P_i^Q \subset Q$ for every $i$, and ordered in some arbitrary way.  The elements of  $\Omega_Q$ will be called   {\bf children} of $Q$.  Note that every $Q\in \mathcal{P}$ has at least two children. We will use one of the method described (Type I tree with the logarithmic subtrees construction) in  Girardi and Sweldens \cite{gw} to construct an unconditional basis of $L^{\beta}$, for every  $1< \beta <\infty$. 

Let   $\mathcal{H}_Q$ be the family of pairs $(S_1,S_2)$, with $S_i \subset \Omega_Q$ and $S_1\cap S_2=\emptyset$, defined  as
$$\mathcal{H}_Q=\cup_{j\in \mathbb{N}} \mathcal{H}_{Q,j},$$
where $\mathcal{H}_{Q,j}$ are constructed recursively  in the  following way. Let $\mathcal{H}_{Q,0}=\{ (A,B)\}$, where $A=\{ P_1^Q,\dots,P_{[n_Q/2]}^Q   \}$ and $B=\{ P_{[n_Q/2]+1}^Q,\dots,P_{n_Q}^Q   \}$. Here $[x]$ denotes the integer part of $x\geq 0$.   Suppose that we have defined  $\mathcal{H}_{Q,j}$. For each element   $(S_1,S_2)\in \mathcal{H}_{Q,j}$, {\it fix}  an ordering $S_1=\{ R_1^1, \dots, R_{n_1}^1  \}$ and $S_2=\{ R_1^2, \dots, R_{n_2}^2  \}$. For each  $i=1,2$ such that  $n_i\geq 2$, define $T_i^1=\{ R_1^i,\dots,R_{[n_i/2]}^i  \}$ and $T_i^2=\{ R_{[n_i/2]+1}^i,\dots,R_{n_i}^i   \}$ and  add  $(T_i^1,T_i^2)$ to  $\mathcal{H}_{Q,j+1}$. This defines $\mathcal{H}_{Q,j+1}.$

Note that since $\mathcal{P}$ is a good grid we have $\mathcal{H}_{Q,j}=\emptyset$ for large $j$ and indeed 
$$\sup_{Q\in \mathcal{P}} \#\mathcal{H}_Q < \infty.$$

Define $\mathcal{H}=\cup_{Q\in \mathcal{P}} \mathcal{H}_Q$.  For every $S=(S_1,S_2) \in\mathcal{H}_Q$ define

$$\phi_{S} =     \frac{1}{m_{(S_1,S_2)}}\Big( \frac{\sum_{P \in S_1} 1_{P}}{\sum_{P \in S_1} |P| }   -  \frac{\sum_{R \in S_2} 1_{R}}{\sum_{R \in S_2} |R|} \Big)$$
where 
$$m_{(S_1,S_2)}=  \big(  \frac{1}{\sum_{P \in S_1} |P|}  + \frac{1}{\sum_{R \in S_2} |R|}   \big)^{1/2} .$$
Note that 
$$\int_Q \phi_S  \ dm=0.$$
Since $1\leq \#S_i\leq 1/\Crr{menor}$ we have
$$\Crr{menor}|Q|\leq \sum_{P \in S_i} |P|\leq  \frac{\Crr{maior}}{\Crr{menor}} |Q|,$$
so
$$\Big(\frac{2\Crr{menor}}{\Crr{maior}}\Big)^{1/2}\frac{1}{|Q|^{1/2}} \leq m_{(S_1,S_2)}\leq \Big(\frac{2}{\Crr{menor}}\Big)^{1/2}\frac{1}{|Q|^{1/2}}$$
Consequently 
\begin{align}\label{estphi} \frac{\Crr{c1}}{|Q|^{1/2}}  &\leq \frac{1}{m_{(S_1,S_2)}}    \min \{  \frac{1}{\sum_{P \in S_1} |P| },   \frac{1}{\sum_{R \in S_2} |R| }    \} \nonumber \\
&\leq |\phi_{S}(x)|\leq    \frac{1}{m_{(S_1,S_2)}}    \max \{  \frac{1}{\sum_{P \in S_1} |P| },   \frac{1}{\sum_{R \in S_2} |R| }    \}\nonumber \\
&\leq \frac{\Crr{c2}}{|Q|^{1/2}} .\end{align}
for every $x \in \cup_{P\in S_1\cup S_2} P.$  Here 
$$\Cll{c1}= \frac{\Crr{menor}^{3/2}}{\sqrt{2}\Crr{maior}}  \ and \ \Cll{c2}=  \frac{\Crr{maior}^{1/2}}{\sqrt{2}\Crr{menor}^{3/2}}+1. $$

\begin{figure}
\includegraphics[scale=0.6]{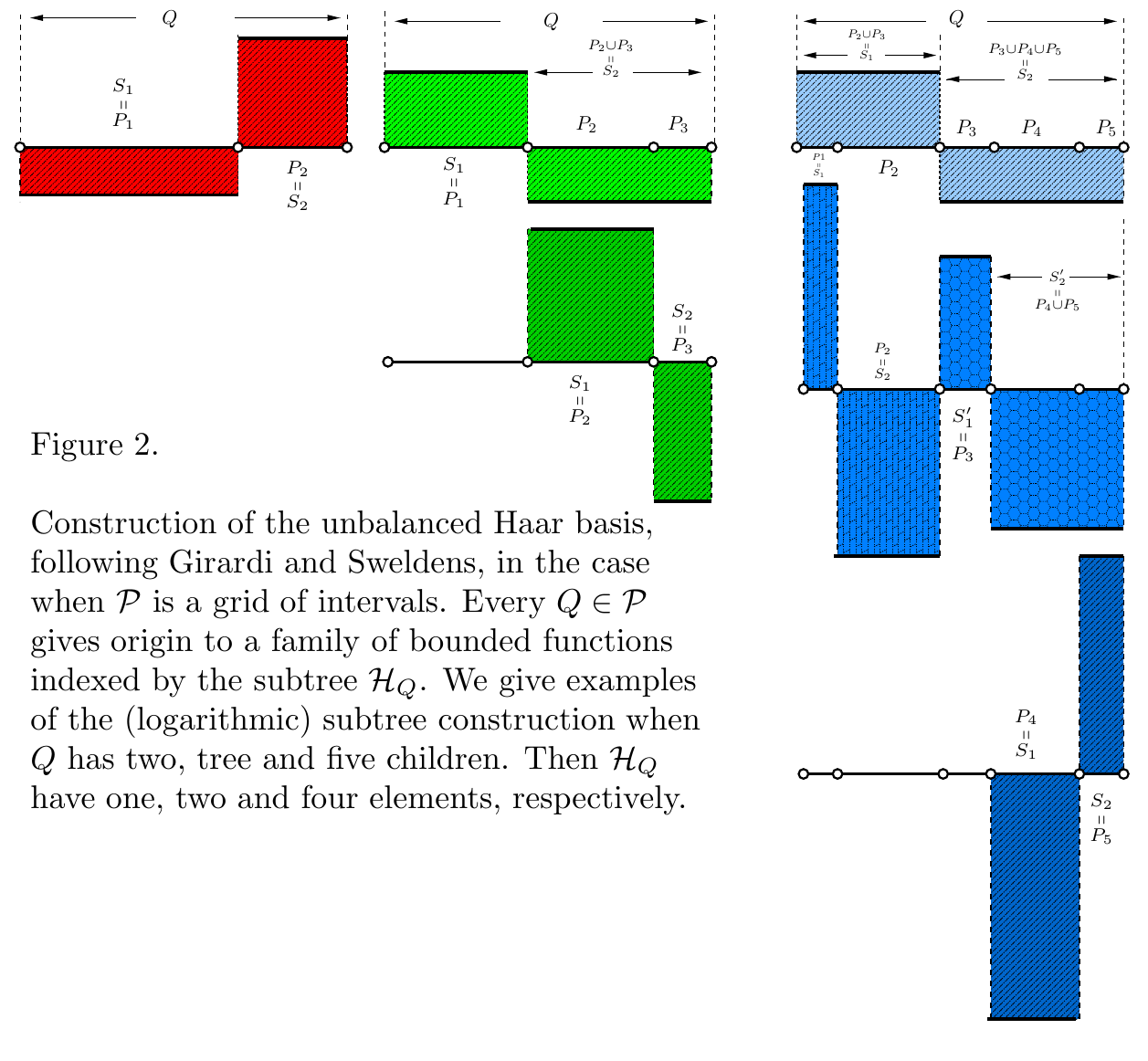}
\end{figure}
Let 
$$\hat{\mathcal{H}}= \{ I\}\cup \mathcal{H}$$ and define
$$\phi_I= \frac{1_I}{|I|^{1/2}}.$$
Then  by  Girardi and Sweldens \cite{gw}  we have that 
$$\{ \phi_S\}_{S\in \hat{\mathcal{H}}}$$
is an unconditional basis of $L^\beta$ for every $\beta > 1$.   
 
  \section{Alternative characterizations I: Messing with norms.}\label{alt1}
 We are going to describe three norms that are equivalent to $|\cdot|_{\mathcal{B}^s_{p,q}}$. Their advantage is that they are far more concrete, in the sense that we do not need to consider arbitrary atomic decompositions to define them. 

 \subsection{Haar  representation}  
For every $f\in L^\beta$, $\beta > 1$,  the series 
\begin{equation}\label{fs22}  f =    \sum_{S\in \hat{\mathcal{H}}} d_S^f \phi_S \end{equation} 
is converges unconditionally  in $L^\beta$, where
 $d_S^f = \int f \phi_S \ dm.$ 
 We will call the r.h.s. of (\ref{fs22}) the {\bf Haar representation} of $f$. 
 Define
 $$ N_{haar}(f)=|I|^{1/p-s-1/2} |d_I^f|+ \Big( \sum_{k}  \big(  \sum_{\substack{ _{Q\in \mathcal{P}^{k}}}}  |Q|^{_{1-sp -\frac{p}{2}}}   \sum_{_{S\in \mathcal{H}_Q}}   |d_S^f|^p\big)^{q/p} \Big)^{1/q},$$

\subsection{Standard atomic representation} Note that 
$$k_I^fa_I= d_I^f \phi_I,$$
where  $k_I= |I|^{1/p-s-1/2}d_I^f$ and $a_I$ is the canonical Souza's atom on $I$.  Let  $S\in \mathcal{H}$. Then $S\in \mathcal{H}_Q$, with $S=(S_1,S_2)$ and some $Q\in \mathcal{P}^k$, with $k\geq 0$.  It is easy to see that for every $P\in S_1\cup S_2$ the function 
 $$a_{S,P}= \frac{ |Q|^{1/2}}{\Crr{c2}} |P|^{s-1/p}  \phi_S 1_{P}$$
is a Souza's atom on $P$. Choose 
 $$c_{S,P}^f=   \Crr{c2} |Q|^{-1/2}  |P|^{1/p-s}  d_S^f. $$
 Note that
\begin{equation} \label{estcsp} |c_{S,P}^f|\leq   \Crr{c2} \max\{ \Crr{maior}^{1/p-s}, \Crr{menor}^{1/p-s}\}   |Q|^{1/p-s-1/2}  |d_S^f|.\end{equation} 
For every child $P$ of $Q \in \mathcal{P}^k$, $k\geq 0$,  define 
 $$\tilde{a}_P^f=   \frac{1}{\tilde{k}_P^f}  \sum_{S=(S_1,S_2)\in \mathcal{H}_Q}   \sum_{P\in S_1\cup S_2} c_{S,P}^f a_{S,P},$$
 where  
\begin{equation}\label{hh} \tilde{k}_P^f =   \sum_{S=(S_1,S_2)\in \mathcal{H}_Q}   \sum_{P\in S_1\cup S_2} |c_{S,P}^f|.\end{equation}
The (finite) number of terms on this sum depends only on the geometry of $\mathcal{P}$. 
%In particular
%\begin{equation}\label{hhj} \tilde{k}_P \leq \end{equation} 
Then $\tilde{a}_P^f$ is a Souza's  atom on $P$ and
$$ \sum_{S\in \mathcal{H}_Q}   d_S^f \phi_S =    \sum_{\substack{ _{P \in \mathcal{P}^{k+1}}\\_{P\subset Q}}}  \tilde{k}_P^f \tilde{a}_P^f.$$
 Let $a_P$ be  the canonical $(s,p)$-Souza's atom on $P$ and choose $x_P\in P$.  Denote
$$k_P^f= \frac{ \tilde{a}_P^f(x_P)}{|P|^{s-1/p}} \tilde{k}_P^f= \frac{1}{|P|^{s-1/p}}\sum_{S\in \mathcal{H}_Q}   d_S^f \phi_S(x_P)=\frac{1}{|P|^{s-1/p}} \int   f \sum_{S\in \mathcal{H}_Q} \phi_S(x_P)\phi_S \ dm.$$
In particular, \change{we added a comma}  for  every $P$ 
 $$f\mapsto k^f_P$$
 extends to a bounded linear functional in $L^1$. We have $|k_P^f|\leq \tilde{k}_P^f$ and 
\begin{align} f &= k_I^f a_I  + \sum_k  \sum_{Q\in \mathcal{P}^{k}}\sum_{\substack{ _{P \in \mathcal{P}^{k+1}}\\_{P\subset Q}}} k_P^f a_P \nonumber \\
 \label{sumf}   &= \sum_i  \sum_{Q\in \mathcal{P}^{i}} k_Q^f a_Q.
\end{align}
 where this series converges unconditionally in $L^\beta$. Here $a_P$ is the canonical Souza's atom. We will call the r.h.s. of (\ref{sumf}) the {\bf standard atomic  representation} of $f$. Let
 $$  N_{st}(f)=|k_I^f|+ \Big(  \sum_{k\geq 1} \big( \sum_{Q\in \mathcal{P}^{k}}  |k_Q^f|^p    \big)^{q/p}\Big)^{1/q}.$$
 
 \subsection{Mean oscillation}
 Define for $p\in [1,\infty)$
$$osc_p(f,Q)= \inf_{c \in \mathbb{C}} (\int_Q |f(x)-c|^p \ dm(x) )^{1/p},$$
and 
$$osc_\infty(f,Q)=\inf_{c \in \mathbb{C}} |f-c|_{L^\infty(Q)}.$$
Denote for every $p\in [1,\infty)$ and $q \in [1,\infty]$
\begin{equation} \label{oc} osc^s_{p,q}(f)= \Big(  \sum_k \big(  \sum_{Q\in \mathcal{P}^k} |Q|^{-sp} osc_p(f,Q)^p  \big)^{q/p} \Big)^{1/q},\end{equation} 
with the obvious adaptation for $q=\infty$.  Let $$ N_{osc}(f)=|I|^{-s}|f|_p+osc^s_{p,q}(f).$$

 \subsection{These norms are equivalent} We have 
 
 \begin{theorem} \label{alte} Suppose $s> 0$, $p\in [1,\infty)$ and $q\in [1,\infty]$. Each one of the norms  $|f|_{\mathcal{B}^s_{p,q}}$, $N_{st}(f)$, $N_{haar}(f)$, $N_{osc}(f)$ is finite if and only if  $f\in \mathcal{B}^s_{p,q}$. Furthermore these norms are equivalent on $\mathcal{B}^s_{p,q}$. Indeed
\begin{equation}\label{in1}  |f|_{\mathcal{B}^s_{p,q}}\leq N_{st}(f),\end{equation}
\begin{equation}\label{in2}N_{st}(f)\leq \Cll{e} N_{haar}(f),\end{equation} 
\begin{equation}\label{in3}N_{haar}(f)\leq \Crr{c2} N_{osc}(f),\end{equation}
\begin{equation}\label{in4}N_{osc}(f)\leq \Cll{no}  |f|_{\mathcal{B}^s_{p,q}}.\end{equation}
where
$$\Crr{e}=1+\Crr{c2} \max\{ \Crr{maior}^{1/p-s}, \Crr{menor}^{1/p-s}\}  \Crr{menor}^{-2-1/p},$$
 $$\Crr{no}=\Crr{mult1}^{1+1/p}  \Crr{co}(t,q, ( |\mathcal{P}^k|^{sp} )_k )|I|^{-s}+\frac{1}{1-\Crr{maior}^s}.$$
 \end{theorem}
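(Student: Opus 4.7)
The plan is to establish the four inequalities (\ref{in1})--(\ref{in4}) in order, relying only on Propositions \ref{lp} and \ref{young} and the explicit Haar construction from Section \ref{haar}. For (\ref{in1}), observe that $f=\sum_i\sum_{Q\in\mathcal{P}^i}k_Q^f a_Q$ is by construction a $\mathcal{B}^s_{p,q}$-representation of $f$ with canonical Souza atoms, whose cost is $\bigl(|k_I^f|^q+\sum_{i\geq 1}(\sum_{Q\in\mathcal{P}^i}|k_Q^f|^p)^{q/p}\bigr)^{1/q}$; since $q\geq 1$, the elementary inequality $(a^q+b^q)^{1/q}\leq a+b$ bounds this by $N_{st}(f)$, and the infimum definition of $|f|_{\mathcal{B}^s_{p,q}}$ yields (\ref{in1}).

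For (\ref{in2}) I will unwind the definitions from Section \ref{alt1}. The zero-level term matches exactly, $|k_I^f|=|I|^{1/p-s-1/2}|d_I^f|$, so $|k_I^f|\leq N_{haar}(f)$. For $P\in\mathcal{P}^{k+1}$ a child of $Q\in\mathcal{P}^k$, the definition gives $|k_P^f|\leq\tilde{k}_P^f=\sum_{S\in\mathcal{H}_Q,\,P\in S_1\cup S_2}|c_{S,P}^f|$, and (\ref{estcsp}) supplies $|c_{S,P}^f|\leq\Crr{c2}\max\{\Crr{maior}^{1/p-s},\Crr{menor}^{1/p-s}\}|Q|^{1/p-s-1/2}|d_S^f|$. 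The power-mean inequality applied to $|k_P^f|^p$, the uniform finiteness of $\#\mathcal{H}_Q$ guaranteed by the Girardi--Sweldens construction, the bound $\#\Omega_Q\leq 1/\Crr{menor}$ on the number of children, and summation first over children of $Q$ and then over $Q\in\mathcal{P}^k$ will absorb all multiplicities into the constant $\Crr{e}$ and produce the tail bound for $N_{st}(f)$ by $\Crr{e}\,N_{haar}(f)$.

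For (\ref{in3}) I will exploit $\int\phi_S\,dm=0$ for $S\in\mathcal{H}$ to rewrite $d_S^f=\int_Q(f-c)\phi_S\,dm$ for every $c\in\mathbb{C}$. The pointwise bound $|\phi_S|_\infty\leq\Crr{c2}|Q|^{-1/2}$ from (\ref{estphi}) combined with H\"older on $Q$ gives $|d_S^f|\leq\Crr{c2}|Q|^{-1/2+1/p'}|f-c|_{L^p(Q)}$; taking the infimum over $c$ and raising to the $p$-th power yields $|Q|^{1-sp-p/2}|d_S^f|^p\leq\Crr{c2}^p|Q|^{-sp}osc_p(f,Q)^p$. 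Summation over the finite family $\mathcal{H}_Q$ and the direct H\"older estimate $|d_I^f|\leq|I|^{-1/2}|f|_1\leq|I|^{1/2-1/p}|f|_p$ for the zero-level term close this inequality.

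For (\ref{in4}), the heart of the theorem, I will take an arbitrary $\mathcal{B}^s_{p,q}$-representation $f=\sum_k\sum_{P\in\mathcal{P}^k}s_Pa_P$ and make the crucial observation that any Souza atom $a_P$ with $P\in\mathcal{P}^k$ and $k<j$ is either constant on $Q\in\mathcal{P}^j$ (when $Q\subset P$) or vanishes on $Q$ (when $Q\cap P=\emptyset$), and in either case contributes nothing to $osc_p(f,Q)$. Hence $osc_p(f,Q)\leq\bigl|\sum_{k\geq j}\sum_{P\subset Q,\,P\in\mathcal{P}^k}s_Pa_P\bigr|_{L^p(Q)}$. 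For a single $k\geq j$, the disjointness of supports inside $\mathcal{P}^k$ (a good-grid property), the pointwise bound $|a_P|_p\leq|P|^s$ on Souza atoms, and the iterated ratio $|P|\leq\Crr{maior}^{k-j}|Q|$ from the good-grid estimate produce $\bigl|\sum_{P\subset Q,\,P\in\mathcal{P}^k}s_Pa_P\bigr|_{L^p(Q)}\leq\Crr{maior}^{(k-j)s}|Q|^s(\sum_{P\subset Q}|s_P|^p)^{1/p}$. Applying Minkowski across $k\geq j$ and summing over the partition $\mathcal{P}^j$ yield $\bigl(\sum_{Q\in\mathcal{P}^j}|Q|^{-sp}osc_p(f,Q)^p\bigr)^{1/p}\leq\sum_{\ell\geq 0}\Crr{maior}^{\ell s}\bigl(\sum_{P\in\mathcal{P}^{j+\ell}}|s_P|^p\bigr)^{1/p}$, and the convolution trick of Proposition \ref{young} with geometric kernel $(\Crr{maior}^{\ell s})_\ell$ then controls the outer $\ell^q$-norm in $j$ by $(1-\Crr{maior}^s)^{-1}$ times the cost of the representation. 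Combined with Proposition \ref{lp} at $t=p$ to bound $|I|^{-s}|f|_p$, taking the infimum over representations yields (\ref{in4}). The main obstacle is precisely this multi-scale estimate: the low-frequency cancellation kills all contributions from levels $k<j$, and only then does the geometric decay $\Crr{maior}^{(k-j)s}$ from the good-grid ratio let Minkowski and Young decouple and close the argument.
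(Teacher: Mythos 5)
Your proposal is correct and follows essentially the same route as the paper's proof: (\ref{in1}) from subadditivity of $x\mapsto x^{1/q}$ applied to the cost of the standard representation, (\ref{in2}) from the coefficient bound (\ref{estcsp}) together with the uniform multiplicity counts, (\ref{in3}) from the zero mean of the $\phi_S$ plus H\"older against $|\phi_S|_\infty\leq\Crr{c2}|Q|^{-1/2}$, and (\ref{in4}) from the observation that coarser Souza atoms are locally constant and hence drop out of the oscillation, followed by the geometric decay $\Crr{maior}^{(k-j)s}$ and the convolution trick of Proposition \ref{young}. The only (immaterial) deviations are bookkeeping: you keep the level-$j$ atoms in the oscillation bound where the paper subtracts the pointwise value of the partial sum through level $k_0$, and, like the paper, your step (\ref{in3}) silently absorbs a harmless factor $(\sup_Q\#\mathcal{H}_Q)^{1/p}$ from the sum over $S\in\mathcal{H}_Q$.
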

 \begin{proof} The inequality (\ref{in1}) is obvious. To simplify the notation we write $d_S, k_P$ instead of $d_S^f, k_P^f$.\ \\ \\
 \noindent {\bf Proof of (\ref{in2}).} The  number of terms in the r.h.s. of (\ref{hh}) depends only on the geometry of $\mathcal{P}$. Indeed 
 \begin{equation}\label{numero}  \sup_{Q \in \mathcal{P}} \sum_{S=(S_1,S_2) \in \mathcal{H}_Q} \#(S_1 \cup S_2) \leq \frac{1}{\Crr{menor}^2}.\end{equation} 
  Consider the standard atomic representation of $f$ given by  (\ref{sumf}). Note that  by (\ref{estcsp})
\begin{align*} 
\sum_{Q\in \mathcal{P}^{k}} \sum_{\substack{ _{P \in \mathcal{P}^{k+1}}\\ _{P\subset Q}}} |k_P|^p &\leq   \sum_{Q\in \mathcal{P}^{k}} \Big(\sum_{_{S\in \mathcal{H}_Q}}   \sum_{\substack{_{P\in S_1\cup S_2}\\_{S=(S_1,S_2)}}} |c_{S,P}| \Big)^p  \\
&\leq  \Crr{c2}^p \max\{ \Crr{maior}^{1-sp}, \Crr{menor}^{1-sp}\}  \sum_{Q\in \mathcal{P}^{k}}  |Q|^{_{1-sp -\frac{p}{2}}}   \Big( \sum_{_{S\in \mathcal{H}_Q}}   \sum_{\substack{_{P\in S_1\cup S_2}\\_{S=(S_1,S_2)}}}   |d_S|\Big)^p   \\
&\leq   \Crr{c2}^p  \max\{ \Crr{maior}^{1-sp}, \Crr{menor}^{1-sp}\}\Crr{menor}^{-2p} \sum_{Q\in \mathcal{P}^{k}}  |Q|^{_{1-sp -\frac{p}{2}}}   \sum_{_{S\in \mathcal{H}_Q}}   \sum_{\substack{_{P\in S_1\cup S_2}\\_{S=(S_1,S_2)}}}   |d_S|^p   \\
&\leq   \Crr{c2}^p  \max\{ \Crr{maior}^{1-sp}, \Crr{menor}^{1-sp}\} \Crr{menor}^{-2p-1} \sum_{Q\in \mathcal{P}^{k}}  |Q|^{_{1-sp -\frac{p}{2}}}   \sum_{_{S\in \mathcal{H}_Q}}   |d_S|^p.
\end{align*}
for every $k$. Consequently
\begin{align*} 
& |k_I|+ \Big( \sum_{k\geq 1} \big( \sum_{P \in \mathcal{P}^{k}} |k_P|^p\big)^{q/p}  \Big)^{1/q} \leq \\
&\leq   |I|^{_{\frac{1}{p}-s-1/2}} |d_I|  + \Crr{c2} \max\{ \Crr{maior}^{_{\frac{1}{p}-s}}, \Crr{menor}^{_{\frac{1}{p}-s}}\}\Crr{menor}^{_{-2-\frac{1}{p}}}  \Big( \sum_k  \big(  \sum_{Q\in \mathcal{P}^{k}}  |Q|^{_{1-sp -\frac{p}{2}}}   \sum_{_{S\in \mathcal{H}_Q}}   |d_S|^p\big)^{q/p} \Big)^{1/q}.
\end{align*}
This completes the proof of (\ref{in2}). \\ \\
\noindent {\bf Proof of (\ref{in3}).} Note that
$$|d_I|\leq  \int_I |f| |\phi_I|\ dm\leq |f|_p |I|^{1/2-1/p}.$$Given $\epsilon > 0$ and  $Q \in \mathcal{P}$, choose $c_Q\in Q$ such that 
$$\Big( \int_Q |f - c_Q|^p \ dm \Big)^{1/p} \leq (1+\epsilon)  osc_p(f,Q).$$
 Since $\phi_S$ has zero mean on $Q$ for every $S\in \mathcal{H}_Q$ we have 
\begin{align*} 
&\big(\sum_{Q\in \mathcal{P}^{k}}|Q|^{1-sp -\frac{p}{2}}  \sum_{S\in \mathcal{H}_Q}  |d_S|^p\big)^{1/p} \\
&\leq\big(\sum_{Q\in \mathcal{P}^{k}}  |Q|^{_{1-sp -\frac{p}{2}}}  \sum_{S\in \mathcal{H}_Q}  |\int f \phi_S \ dm |^p \big)^{1/p} \\
&\leq\big(\sum_{Q\in \mathcal{P}^{k}} |Q|^{_{1-sp -\frac{p}{2}}}  \sum_{S\in \mathcal{H}_Q}  |\int_Q f \phi_S -c_Q \phi_S\ dm |^p \big)^{1/p} \\
&\leq\big(\sum_{Q\in \mathcal{P}^{k}}  |Q|^{_{1-sp -\frac{p}{2}}}  \sum_{S\in \mathcal{H}_Q} (\int_Q |f - c_Q| |\phi_S| \ dm )^p \big)^{1/p} \\
&\leq \Crr{c2}\big(\sum_{\substack{_{Q\in \mathcal{P}^{k}}\\ _{Q\subset J}}}  |Q|^{_{1-sp -\frac{p}{2}}}   \sum_{S\in \mathcal{H}_Q} ((1+\epsilon) osc_p(f,Q) |Q|^{1/p'-1/2} )^p \big)^{1/p} \\
&\leq \Crr{c2}(1+\epsilon) \big(\sum_{Q\in \mathcal{P}^{k}} |Q|^{_{1-sp +p/p'-p}}   \sum_{S\in \mathcal{H}_Q} osc_p(f,Q)^p  \big)^{1/p} \\
&\leq \Crr{c2} (1+\epsilon)\big(\sum_{Q\in \mathcal{P}^{k}} |Q|^{_{-sp}}  \sum_{S\in \mathcal{H}_Q}  osc_p(f,Q)^p  \big)^{1/p}.
\end{align*}
Since $\epsilon$ is arbitrary, this concludes the proof of (\ref{in3}). \\ \\
\noindent {\bf Proof of (\ref{in4}).} Finally note that  if $f \in \mathcal{B}^s_{p,q}$ and $\epsilon > 0$ then there is a $\mathcal{B}^s_{p,q}$-representation of $f$ 
$$f= \sum_{P \in \mathcal{P}} k_P a_P.$$
 such that 
 $$ \big( \sum_{k=0}^{\infty} (\sum_{Q \in \mathcal{P}^k}    |k_Q|^p)^{q/p} \big)^{1/q} \leq (1+\epsilon)  |f|_{\mathcal{B}^s_{p,q}}.$$
For each $J \in \mathcal{P}^{k_0}$, choose  $x_J \in J$. Then 
\begin{align}  & \Big( \sum_{J\in \mathcal{P}^{k_0} } |J|^{-sp }  osc_p(f,J)^p  \Big)^{1/p} \nonumber \\
&\leq   \Big(  \sum_{J\in \mathcal{P}^{k_0} }  |J|^{-sp} \int_J |f(x) -    \sum_{\substack{_{Q \in \mathcal{P}}\\ _{J \subset Q}}} k_Q a_Q(x_J)|^p \ dm   \Big)^{1/p} \nonumber \\
&\leq  \Big( \sum_{J\in \mathcal{P}^{k_0} }  |J|^{-sp} \int |\sum_{k> k_0}  \sum_{\substack{_{R \in \mathcal{P}^k}\\_{R  \subset J}}} k_R a_R |^p \ dm  \Big)^{1/p}  \nonumber \\
&\leq \Big( \int |  \sum_{k>  k_0}\sum_{J\in \mathcal{P}^{k_0} }    |J|^{-s}  \sum_{\substack{_{R \in \mathcal{P}^k}\\_{R  \subset J}}} k_R a_R |^p \ dm  \Big)^{1/p}  \nonumber \\
&\leq  \sum_{k>  k_0} \Big( \int | \sum_{J\in \mathcal{P}^{k_0} }    |J|^{-s}  \sum_{\substack{_{R \in \mathcal{P}^k}\\_{R  \subset J}}} k_R a_R |^p \ dm  \Big)^{1/p}  \nonumber \\
&\leq  \sum_{k> k_0} \Big(   \sum_{J\in \mathcal{P}^{k_0} }  |J|^{-sp}\int   |    \sum_{\substack{_{R \in \mathcal{P}^k}\\_{R  \subset J}}} k_R a_R |^p \ dm  \Big)^{1/p}  \nonumber \\
&\leq  \sum_{k> k_0} \Big(   \sum_{J\in \mathcal{P}^{k_0} } \Big( \frac{\sup \{ |R|\colon R \in \mathcal{P}^k, R  \subset J \}}{ |J|}\Big)^{sp}  \sum_{\substack{_{R \in \mathcal{P}^k}\\_{R  \subset J}}} |k_R|^p  \Big)^{1/p}  \nonumber \\
&\leq  \sum_{k> k_0} \Big(  \Crr{maior}^{sp(k-k_0)}   \sum_{J\in \mathcal{P}^{k_0} }  \sum_{\substack{_{R \in \mathcal{P}^k}\\_{R  \subset J}}} |k_R|^p  \Big)^{1/p}  \nonumber \\
&\leq  \sum_{k> k_0}  \Crr{maior}^{s(k-k_0)}  \big( \sum_{R \in \mathcal{P}^k} |k_R|^p  \big)^{1/p}.  \nonumber 
\end{align}
This is  a convolution, so 
\begin{align*} \Big( \sum_{k_0} \Big( \sum_{J\in \mathcal{P}^{k_0} } |J|^{-sp }  osc_p(f,J)^p \Big)^{q/p} \Big)^{1/q} &\leq \frac{1+\epsilon}{1-\Crr{maior}^s} |f|_{\mathcal{B}^s_{p,q}}.\end{align*}
and since $\epsilon > 0$ is arbitrary,  by Proposition \ref{lp} we obtain 
\begin{align*} &|I|^{-s}|f|_p + \Big( \sum_{k_0} \Big( \sum_{J\in \mathcal{P}^{k_0} } |J|^{-sp }  osc_p(f,J)^p  \Big)^{q/p} \Big)^{1/q} \\
&\leq \Big(\Crr{mult1}^{1+1/p}  \Crr{co}(t,q, ( |\mathcal{P}^k|^{sp} )_k )|I|^{-s}+\frac{1}{1-\Crr{maior}^s}\Big) |f|_{\mathcal{B}^s_{p,q}}.\end{align*}
This proves (\ref{in4}).
 \end{proof}
 
 The following is an important consequence of this section. 
 \begin{corollary} \label{fou} For each $P\in \mathcal{P}$ there exists a linear functional in $L^1$ 
 $$f  \mapsto k_P^f$$
 with the following property. The so-called  standard $\mathcal{B}^s_{p,q}$-representation of $f\in \mathcal{B}^s_{p,q}$ given by 
$$ f = \sum_k \sum_{P \in \mathcal{P}^{k}} k_P^f a_P,$$
satisfies 
$$\Big(\sum_k  \big( \sum_{P\in \mathcal{P}^k}  |k_P^f|^p\big)^{q/p}\Big)^{1/p}\leq \Crr{c2}  \Crr{e}\Crr{no} |f|_{\mathcal{B}^s_{p,q}}.$$

 \end{corollary}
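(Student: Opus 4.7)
The plan is to identify the advertised linear functionals with the coefficients $k_P^f$ already constructed in Section~\ref{alt1}, verify that they extend to $L^1$, and then assemble the bound by chaining the three norm equivalences proved in Theorem~\ref{alte}.

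First, I would recall the explicit formulas from Section~\ref{alt1}: for $P = I$ set $k_I^f = |I|^{1/p-s-1/2} d_I^f = |I|^{-s} \int_I f \, dm$, and for $P \in \mathcal{P}^{k+1}$ with parent $Q \in \mathcal{P}^k$ and a fixed choice of $x_P \in P$, set
$$k_P^f = |P|^{1/p-s} \int f \cdot g_P \, dm, \quad g_P := \sum_{S \in \mathcal{H}_Q} \phi_S(x_P)\,\phi_S.$$
Since $\#\mathcal{H}_Q$ is uniformly bounded (by property ${\Crr{g6}}$ of the good grid) and each $\phi_S$ is bounded by (\ref{estphi}), the function $g_P$ is in $L^\infty$. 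Thus $f \mapsto k_P^f$ extends to a bounded linear functional on $L^1$ as required.

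Second, I would note that any $f \in \mathcal{B}^s_{p,q}$ lies in $L^\beta$ for some $\beta > 1$ (as observed at the start of Section~11 via Proposition~\ref{lp}), so the Girardi--Sweldens Haar expansion (\ref{fs22}) converges unconditionally in $L^\beta$, and regrouping as in Section~\ref{alt1} yields (\ref{sumf}), the standard atomic representation of $f$ with coefficients exactly the functionals above. This is the claimed representation.

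Third, I would derive the norm bound by chaining (\ref{in2}), (\ref{in3}) and (\ref{in4}) from Theorem~\ref{alte}:
$$N_{st}(f) \leq \Crr{e} N_{haar}(f) \leq \Crr{e} \Crr{c2} N_{osc}(f) \leq \Crr{e} \Crr{c2} \Crr{no} \, |f|_{\mathcal{B}^s_{p,q}}.$$
Finally I would relate the left-hand side of the corollary's inequality to $N_{st}(f)$. Since $\mathcal{P}^0 = \{I\}$ we have $\big(\sum_{P \in \mathcal{P}^0} |k_P^f|^p\big)^{q/p} = |k_I^f|^q$, and the elementary inequality $(a^q + b^q)^{1/q} \leq a + b$ for $a,b \geq 0$ and $q \geq 1$ gives
$$\Big(\sum_k \big(\sum_{P\in \mathcal{P}^k} |k_P^f|^p\big)^{q/p}\Big)^{1/q} \leq |k_I^f| + \Big(\sum_{k \geq 1}\big(\sum_{P\in \mathcal{P}^k} |k_P^f|^p\big)^{q/p}\Big)^{1/q} = N_{st}(f),$$
with the obvious modification when $q = \infty$. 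Combining yields the stated bound.

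There is no real obstacle here: the corollary is essentially a bookkeeping statement packaging the content of Section~\ref{alt1} and Theorem~\ref{alte}. The only delicate point is emphasising that the $k_P^f$ are not merely functionals on $\mathcal{B}^s_{p,q}$ but extend to $L^1$, which follows from the boundedness of $g_P$ and the uniform finiteness of $\mathcal{H}_Q$.
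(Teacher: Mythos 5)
Your proposal is correct and follows exactly the route the paper intends: the corollary is stated without proof precisely because it is the combination of the standard atomic representation of Section 14.2 (where the extension of $f\mapsto k_P^f$ to a bounded functional on $L^1$ is already noted, via the boundedness of the finitely many $\phi_S$, $S\in\mathcal{H}_Q$) with the chain of inequalities (\ref{homo-in2})--(\ref{homo-in4}) of Theorem 14.1. Only a trivial slip: $k_I^f=|I|^{1/p-s-1}\int_I f\,dm$ rather than $|I|^{-s}\int_I f\,dm$ (they agree only when $p=1$), which affects nothing in the argument.
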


  \section{Alternative characterizations II: Messing with atoms.}
 Here we move to  alternative descriptions of $\mathcal{B}^s_{p,q}$, \change{we added a comma} which are quite different from \change{we replaced preposition "of" by "from"} those in  Section \ref{alt1}. Instead of choosing  a definitive representation of elements of $\mathcal{B}^s_{p,q}$, \change{we added a comma} we indeed give atomic decompositions of $\mathcal{B}^s_{p,q}$ using far more general classes of atoms.
 
 \subsection{Using Besov's atoms}

The advantage of Besov's atoms is that it is a wide and general class of atoms, that includes even unbounded functions. They can  be considered in {\it every} measure space   endowed with a good grid as in Section \ref{partition}. Moreover  in appropriate settings it contains  H\"older and bounded variations atoms, which will be quite useful in the get other characterizations of $\mathcal{B}^s_{p,q}(\mathcal{P},\mathcal{A}^{sz}_{s,p})$. The atomic decompositions of  $B^s_{p,q}(\mathbb{R}^n)$ by Besov's atoms  were considered  by Triebel \cite{ns} in the case $s>0$, $p=q\in [1,\infty]$ and 
by Schneider and Vyb\'\i ral \cite{corjan} in the case $s>0$, $p,q\in [1,\infty]$. They are refered there as "non-smooth atomic decompositions".

Let $s < \beta$ and $\tilde{q}\in [1,\infty]$. A  {\bf $(s,\beta,p,\tilde{q})$-Besov atom}  on the interval $Q$ is a function $a \in \mathcal{B}^\beta_{p,\tilde{q}}(Q, \mathcal{P}_Q, \mathcal{A}^{sz}_{\beta,p})$ such that  $a(x)=0$ for $x \not\in Q$,
%$$|a|_\infty \leq |Q|^{s-1/p}$$
and
$$|a|_{\mathcal{B}^\beta_{p,\tilde{q}}(Q, \mathcal{P}_Q, \mathcal{A}^{sz}_{\beta,p})}\leq \frac{1}{\Crr{ba}} |Q|^{s-\beta}.$$
where 
$$\Cll{ba}=\Crr{mult1}^{1+1/p}  \Big( \sum_k \Crr{maior}^{k\beta\tilde{q}}    \Big)^{1/\tilde{q}}\geq 1.$$
The family of $(s,\beta,p,\tilde{q})$-Besov atoms supported on $Q$ will be denoted by  $\mathcal{A}^{bv}_{s,\beta,p,\tilde{q}}(Q)$.  Naturally $\mathcal{A}^{sz}_{s,p}(Q)\subset \mathcal{A}^{bs}_{s,\beta,p,\tilde{q}}(Q)$. By Proposition \ref{lp}  we have
\begin{eqnarray}
|a|_{p}&\leq&  \frac{\Crr{mult1}^{1+1/p}}{\Crr{ba}}  \Big( \sum_k |\mathcal{P}^k_Q|^{\beta\tilde{q}}    \Big)^{1/\tilde{q}} |a|_{\mathcal{B}^\beta_{p,\tilde{q}}(Q, \mathcal{P}_Q, \mathcal{A}^{sz}_{\beta,p})}\\
&\leq&  |Q|^{s-\beta} |Q|^{\beta} =  |Q|^s=  |Q|^{s-\frac{1}{\infty}}. 
\end{eqnarray} 
so a $(s,\beta,p,\tilde{q})$-Besov atom is an atom of  type $(s,p,1)$. \\ \\

The following result says there are many ways to define $\mathcal{B}^s_{p,q}$ using various classes of atoms. 

\begin{proposition}[Souza's atoms and  Besov's  atoms]\label{besova}Let $\mathcal{P}$ be a good grid. Let $\mathcal{A}$ be a class of $(s,p,u)$-atoms , with $u\geq 1$, such that  for some $s< \beta$, $\tilde{q}\in [1,\infty]$, and $\Crr{56}$, $\Crr{566} \geq 0$ we have that  for every $Q\in \mathcal{P}$ 
$$\frac{1}{\Cll{56}} \mathcal{A}^{sz}_{s,p}(Q) \subset \mathcal{A}(Q)\subset  \Cll{566} \mathcal{A}^{bs}_{s,\beta,p,\tilde{q}}(Q).$$
Then 
 $$\mathcal{B}^s_{p,q}(\mathcal{P},\mathcal{A}^{sz}_{s,p})=\mathcal{B}^s_{p,q}(\mathcal{P},\mathcal{A})=\mathcal{B}^s_{p,q}(\mathcal{P},\mathcal{A}^{bs}_{s,\beta,p,\tilde{q}}).$$ 
 Moreover
$$|f|_{\mathcal{B}^s_{p,q}(\mathcal{A})} \leq \Crr{56} |f|_{\mathcal{B}^s_{p,q}(\mathcal{A}^{sz}_{s,p})} \  and \  |f|_{\mathcal{B}^s_{p,q}(\mathcal{A}^{sz}_{s,p})}  \leq \frac{\Crr{566}}{1-\Crr{maior}^{\beta-s}}|f|_{\mathcal{B}^s_{p,q}(\mathcal{A})}.$$
\end{proposition}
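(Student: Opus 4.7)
The plan is to close the loop
$$\mathcal{B}^s_{p,q}(\mathcal{A}^{sz}_{s,p})\hookrightarrow\mathcal{B}^s_{p,q}(\mathcal{A})\hookrightarrow\mathcal{B}^s_{p,q}(\mathcal{A}^{bs}_{s,\beta,p,\tilde{q}})\hookrightarrow\mathcal{B}^s_{p,q}(\mathcal{A}^{sz}_{s,p}),$$
with constants $\Crr{56}$, $\Crr{566}$, and $(1-\Crr{maior}^{\beta-s})^{-1}$ respectively on the three arrows. The first two arrows follow by pure rescaling of coefficients in a representation. If $f=\sum_Q s_Qa_Q$ is an $\mathcal{A}^{sz}_{s,p}$-representation, the hypothesis $\mathcal{A}^{sz}_{s,p}(Q)\subset \Crr{56}\mathcal{A}(Q)$ lets me write $a_Q=\Crr{56}\tilde a_Q$ with $\tilde a_Q\in\mathcal{A}(Q)$, producing an $\mathcal{A}$-representation $f=\sum_Q(\Crr{56}s_Q)\tilde a_Q$ whose cost is exactly $\Crr{56}$ times the original; taking infima gives $|f|_{\mathcal{B}^s_{p,q}(\mathcal{A})}\leq \Crr{56}|f|_{\mathcal{B}^s_{p,q}(\mathcal{A}^{sz}_{s,p})}$. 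The identical rescaling by $\Crr{566}$, using $\mathcal{A}(Q)\subset\Crr{566}\mathcal{A}^{bs}_{s,\beta,p,\tilde q}(Q)$, handles the second arrow.

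The substantive step is the third arrow, which I obtain from Proposition \ref{trans}.C (Transmutation of Atoms) applied with $\mathcal{G}=\mathcal{W}=\mathcal{P}$, $\mathcal{A}_1=\mathcal{A}^{bs}_{s,\beta,p,\tilde q}$, $\mathcal{A}_2=\mathcal{A}^{sz}_{s,p}$, and $k_i=i$ (so $\alpha=1$, $A=B=0$). The only thing to verify is hypothesis III: given a Besov atom $a_Q\in\mathcal{A}^{bs}_{s,\beta,p,\tilde q}(Q)$ with $Q\in\mathcal{P}^i$, exhibit $(s,p)$-Souza atoms $b_{P,Q}$ and coefficients $s_{P,Q}$ satisfying the geometric-decay bound (\ref{ad}). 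By the definition of Besov atom, $a_Q$ admits a $\mathcal{B}^\beta_{p,\tilde q}(Q,\mathcal{P}_Q,\mathcal{A}^{sz}_{\beta,p})$-representation
$$a_Q=\sum_k\sum_{P\in\mathcal{P}^k_Q}r_{P,Q}c_{P,Q}$$
with $(\beta,p)$-Souza atoms $c_{P,Q}$ and cost at most $|Q|^{s-\beta}/\Crr{ba}$. The one-parameter scale of Section \ref{escala} writes each $c_{P,Q}$ as $|P|^{\beta-s}b_{P,Q}$ with $b_{P,Q}\in\mathcal{A}^{sz}_{s,p}(P)$; setting $s_{P,Q}:=r_{P,Q}|P|^{\beta-s}$, the trivial embedding $\ell^{\tilde q}\subset\ell^\infty$ at the level of partitions gives $(\sum_{P\in\mathcal{P}^k_Q}|r_{P,Q}|^p)^{1/p}\leq |Q|^{s-\beta}/\Crr{ba}$, and combining with the good-grid estimate $|P|\leq\Crr{maior}^{k-i}|Q|$ for $P\in\mathcal{P}^k_Q$ yields
$$\sum_{P\in\mathcal{P}^k,\,P\subset Q}|s_{P,Q}|^p\leq \frac{1}{\Crr{ba}^p}\bigl(\Crr{maior}^{p(\beta-s)}\bigr)^{k-i},$$
so (\ref{ad}) holds with $C_{rf}=\Crr{ba}^{-p}$ and $\lambda=\Crr{maior}^{p(\beta-s)}<1$ (since $\beta>s$ and $\Crr{maior}<1$).

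With $\alpha=1$ and $A=B=0$, the sequence $b$ of Proposition \ref{trans} is $b_n=\lambda^n$ for $n\geq 0$ and zero otherwise, $\Crr{m1}=1$, and since $p,q\geq 1$, Case A of the convolution trick gives $\Crr{co2}(p,q,b)=\sum_{n\geq 0}\Crr{maior}^{n(\beta-s)}=(1-\Crr{maior}^{\beta-s})^{-1}$. Proposition \ref{trans}.C then delivers
$$|f|_{\mathcal{B}^s_{p,q}(\mathcal{A}^{sz}_{s,p})}\leq \frac{\Crr{mult1}}{\Crr{ba}}\cdot\frac{1}{1-\Crr{maior}^{\beta-s}}\,|f|_{\mathcal{B}^s_{p,q}(\mathcal{A}^{bs}_{s,\beta,p,\tilde q})}.$$
The a priori extraneous prefactor is tamed by the very definition $\Crr{ba}=\Crr{mult1}^{1+1/p}(\sum_k\Crr{maior}^{k\beta\tilde q})^{1/\tilde q}\geq \Crr{mult1}$, so $\Crr{mult1}/\Crr{ba}\leq 1$; composing with $|f|_{\mathcal{B}^s_{p,q}(\mathcal{A}^{bs}_{s,\beta,p,\tilde q})}\leq \Crr{566}|f|_{\mathcal{B}^s_{p,q}(\mathcal{A})}$ produces the announced inequality. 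The main obstacle is extracting the geometric decay in hypothesis III: one must recognize that the $\tilde q$-norm bound on the Besov atom transfers to an $\ell^\infty$-in-$k$ bound via the trivial embedding, and that the factor $|P|^{\beta-s}$ generated by the Souza-atom rescaling combines with the good-grid contraction $\Crr{maior}^{k-i}$ to produce precisely the exponentially small coefficient required by Proposition \ref{trans}; once this bookkeeping is in place, the machinery runs automatically.
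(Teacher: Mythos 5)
Your proof is correct and follows essentially the same route as the paper: the first two inclusions by rescaling representations, and the key third inclusion via Proposition \ref{trans}, verifying hypothesis III by expanding the Besov atom into $(\beta,p)$-Souza atoms, rescaling them to $(s,p)$-Souza atoms, and using $|P|^{\beta-s}\leq \Crr{maior}^{(k-j)(\beta-s)}|Q|^{\beta-s}$ together with the $\ell^{\tilde q}\hookrightarrow\ell^\infty$ bound on the per-level coefficients to get the geometric decay (\ref{ad}) with $\lambda=\Crr{maior}^{p(\beta-s)}$. You are in fact somewhat more explicit than the paper in instantiating the constants of Proposition \ref{trans} ($\alpha=1$, $A=B=0$, $\Crr{co2}$, and the observation $\Crr{mult1}/\Crr{ba}\leq 1$); the only pedantic caveat is that taking a representation whose cost equals the infimum exactly requires Corollary \ref{compa12} (or an $\epsilon$/factor-2 slack as the paper uses).
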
 
\begin{proof} The  first inequality is obvious. To prove the second inequality, recall that due to \change{we added "to"}  Proposition \ref{trans} it is enough to show  the following claim
\vspace{5mm}

\noindent {\it Claim. Let  $b_J$ be a  $(s,\beta,p,\tilde{q})$-Besov atom on $J\in \mathcal{P}^j$.  Then for every $P\subset J$ with $P\in \mathcal{P}$ there is  $m_P \in \mathbb{C}$ such that 
$$b_J= \sum_{P\subset J} m_P a_P,$$
where $a_P$ is the canonical $(s,p)$-Souza's atom on $P$ and 
$$  \Big( \sum_{P\in \mathcal{P}^k, P \subset J} |m_P|^p\Big)^{1/p} \leq   \frac{2}{\Crr{ba}}  \Crr{maior}^{(k-j)(\beta-s)}.$$}
\vspace{5mm}

\noindent Indeed, since
$$|b_J|_{\mathcal{B}^\beta_{p,\tilde{q}}(J,\mathcal{P},\mathcal{A}^s_{\beta,p})}\leq \frac{1}{\Crr{ba}} |J|^{s-\beta},$$
there exists a $\mathcal{B}^\beta_{p,\tilde{q}}(J,\mathcal{P},\mathcal{A}^s_{\beta,p})$-representation
$$b_J=\sum_{P\in \mathcal{P}, P\subset J} c_P d_P,$$
where $d_P$ is the canonical $(\beta,p)$-Souza's atom on $P$ and 
$$\big( \sum_{P\in \mathcal{P}^k, P \subset J} |c_P|^p\big)^{1/p} \leq \Big(\sum_i \big( \sum_{P\in \mathcal{P}^i, P\subset J} |c_P|^p  \big)^{\tilde{q}/p}    \Big)^{1/\tilde{q}}\leq \frac{2}{\Crr{ba}}   |J|^{s-\beta}.$$
Then 
$$a_P = |P|^{s-\beta}d_P$$
is a $(s,p)$-Souza's atom and 
$$b_J=\sum_{P\in \mathcal{P}, P\subset J} m_P a_P,$$
with $m_P = c_P |P|^{\beta-s}$ and 
\begin{align} \big( \sum_{P\in \mathcal{P}^k, P \subset J} |m_P|^p\big)^{1/p} &=  \big( \sum_{P\in \mathcal{P}^k, P \subset J} \Big(\frac{|P|}{|J|}\Big)^{p(\beta-s)} |J|^{p(\beta-s)}|c_P|^p\big)^{1/p}  \nonumber \\
&\leq \Crr{maior}^{(k-j)(\beta-s)}  |J|^{\beta-s}  \big(  \sum_{P\in \mathcal{P}^k, P \subset J} |c_P|^p\big)^{1/p}  \nonumber    \\
&\leq  \frac{2}{\Crr{ba}}  \Crr{maior}^{(k-j)(\beta-s)}. \nonumber
\end{align}
\end{proof}

%\begin{corollary}\label{equi}  Let $\mathcal{P}$ be a good grid. We have 
%$$\mathcal{B}^s_{p,q}(\mathcal{P},\mathcal{A}^{sz}_{s,p})=\mathcal{B}^s_{p,q}(\mathcal{P},\mathcal{A}^h_{s,\beta,p})=\mathcal{B}^s_{p,q}(\mathcal{P},\mathcal{A}^{bv}_{s,\beta,p})=\mathcal{B}^s_{p,q}(\mathcal{P},\mathcal{A}^{bs}_{\mathcal{P}, s,\beta,p,q'})$$ for every $s< \beta\leq 1/p$ and $q'\geq 1$. Moreover the corresponding norms are equivalent.
%\end{corollary}

 \subsection{Using H\"older atoms} \label{hatoms2} Suppose that $I$ is a quasi-metric space with a quasi-distance $d(\cdot,\cdot)$ and a good grid satisfying the assumptions in Section \ref{hatoms}. 

\begin{proposition} \label{hold} Suppose
$$0< s  < \beta < \tilde{\beta},$$
$p\in [1,\infty)$ and  $\tilde{q}\in [1,\infty].$  For every $Q\in \mathcal{P}$ we have 
\begin{equation}\label{final1} \Cll{at3} \mathcal{A}^{h}_{s,\tilde{\beta},p}(Q)\subset \mathcal{A}^{bs}_{s,\beta,p,\tilde{q}}(Q),\end{equation}  for some $\Crr{at3} > 0$. Moreover
\begin{equation}\label{final2} \Cll{at4} \mathcal{A}^{h}_{s,\beta,p}(Q)\subset \mathcal{A}^{bs}_{s,\beta,p,\infty}(Q),\end{equation}  for some $\Crr{at4} > 0.$ In particular
\begin{equation}\label{inclusion1}\mathcal{B}^s_{p,q}=\mathcal{B}^s_{p,q}(\mathcal{A}^{h}_{s,\tilde{\beta},p})=\mathcal{B}^s_{p,q}(\mathcal{A}^{h}_{s,\beta,p}(Q))\end{equation}
and the corresponding norms are equivalent.
\end{proposition}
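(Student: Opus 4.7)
The plan is to prove (\ref{final1}) and (\ref{final2}) via an explicit Besov representation of any H\"older atom obtained by telescoping along the induced grid $\mathcal{P}_Q$, and then to deduce (\ref{inclusion1}) by feeding these inclusions into Proposition \ref{besova}.

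First, given a $(s,\tilde\beta,p)$-H\"older atom $a$ on $Q$, I would fix $x_P\in P$ for every $P\in\mathcal{P}_Q$, set $\bar{a}_P := a(x_P)$, and observe that the piecewise-constant approximants $f_k = \sum_{P\in\mathcal{P}^k_Q}\bar{a}_P\, 1_P$ converge to $a$ in $L^\infty$ (hence in $L^p$) by the H\"older bound combined with $\mathrm{diam}(P)^D \leq \Crr{kki}^D |P|\to 0$. This yields the telescoping representation
\[
a \;=\; \bar{a}_Q 1_Q \;+\; \sum_{k\geq 1}\sum_{P\in\mathcal{P}^k_Q}(\bar{a}_P - \bar{a}_{P^+})\, 1_P,
\]
where $P^+$ is the $\mathcal{P}_Q$-parent of $P$. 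Rewriting each summand as $s_P\cdot(|P|^{\beta-1/p} 1_P)$, where $|P|^{\beta-1/p}1_P$ is the canonical $(\beta,p)$-Souza atom on $P$, gives $s_P = (\bar{a}_P-\bar{a}_{P^+})|P|^{1/p-\beta}$. Applying the H\"older condition to $d(x_P,x_{P^+})\leq \mathrm{diam}(P^+)\leq \Crr{kki}|P^+|^{1/D}$ together with $|P^+|\leq |P|/\Crr{menor}$ produces the key pointwise estimate
\[
|s_P| \;\leq\; \Crr{kki}^{D\tilde\beta}\,\Crr{menor}^{-\tilde\beta}\, |Q|^{s - 1/p - \tilde\beta}\, |P|^{\tilde\beta + 1/p - \beta}.
\]

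To control $\sum_{P\in\mathcal{P}^k_Q}|s_P|^p$, I would split $|P|^{p(\tilde\beta-\beta)+1} = |P|^{p(\tilde\beta-\beta)} \cdot |P|$, bound the first factor uniformly by $(\Crr{maior}^k|Q|)^{p(\tilde\beta-\beta)}$ (valid since $\tilde\beta\geq\beta$), and use $\sum_{P\in\mathcal{P}^k_Q}|P|\leq |Q|$. The exponents on $|Q|$ then collapse to $|Q|^{p(s-\beta)}$, yielding $(\sum_P |s_P|^p)^{1/p}\leq C\,\Crr{maior}^{k(\tilde\beta-\beta)}\,|Q|^{s-\beta}$. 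Adding the level-$0$ contribution $|\bar{a}_Q|\,|Q|^{1/p-\beta}\leq |Q|^{s-\beta}$, the total Besov cost is dominated by $|Q|^{s-\beta}$ times either a convergent geometric series in $\Crr{maior}^{\tilde q(\tilde\beta-\beta)}$ when $\tilde\beta>\beta$ (any $\tilde q\in[1,\infty]$, giving (\ref{final1})), or by the supremum $1$ when $\tilde\beta=\beta$ and $\tilde q=\infty$ (giving (\ref{final2})). In both cases the uniform bound $C_0|Q|^{s-\beta}$ lets us choose $\Crr{at3}$ and $\Crr{at4}$ independently of $Q$.

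For (\ref{inclusion1}) I would invoke Proposition \ref{besova} twice: constants on $Q$ trivially satisfy both H\"older conditions (zero seminorm and the same $L^\infty$ bound), so $\mathcal{A}^{sz}_{s,p}(Q)\subset\mathcal{A}^h_{s,\beta,p}(Q)$ and $\mathcal{A}^{sz}_{s,p}(Q)\subset\mathcal{A}^h_{s,\tilde\beta,p}(Q)$, while (\ref{final1}) and (\ref{final2}) supply the complementary inclusions into the Besov atom classes. Proposition \ref{besova} then applies with $\Crr{56}=1$ and $\Crr{566}=\Crr{at3}^{-1}$ (resp.\ $\Crr{at4}^{-1}$), giving $\mathcal{B}^s_{p,q}=\mathcal{B}^s_{p,q}(\mathcal{A}^h_{s,\tilde\beta,p})=\mathcal{B}^s_{p,q}(\mathcal{A}^h_{s,\beta,p})$ with equivalent norms. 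The main technical obstacle is keeping the dichotomy between $\tilde\beta>\beta$ (any $\tilde q$) and $\tilde\beta=\beta$ ($\tilde q=\infty$) straight, since it is precisely the summability of the geometric factor $\Crr{maior}^{k(\tilde\beta-\beta)}$ that dictates which $\tilde q$ is admissible; the pointwise estimate on $|s_P|$ and the reduction to Proposition \ref{besova} are otherwise uniform.
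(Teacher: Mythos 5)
Your proposal is correct and follows essentially the same route as the paper: a telescoping decomposition of the H\"older atom along the induced grid $\mathcal{P}_Q$ into canonical $(\beta,p)$-Souza atoms, the H\"older bound on the level-$k$ increments yielding $(\sum_P|s_P|^p)^{1/p}\lesssim \Crr{maior}^{k(\tilde\beta-\beta)}|Q|^{s-\beta}$, the dichotomy between $\tilde\beta>\beta$ (geometric summability for any $\tilde q$) and $\tilde\beta=\beta$ (only $\tilde q=\infty$), and the reduction of (\ref{inclusion1}) to Proposition \ref{besova}. The only cosmetic difference is that you telescope with point evaluations $a(x_P)$ where the paper uses $\inf_P\phi$ together with a splitting into positive and negative parts; both give the same estimates.
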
 
\begin{proof} Let $\phi \in \mathcal{A}^{h}_{s,\tilde{\beta},p}(Q)$.  Then $\phi$ has a continuous extension to $\overline{Q}$.  So firstly we  assume that    $\phi\geq 0$ has a continuous extension to $\overline{Q}$. Define 
$$c_Q= \min \phi(Q) |Q|^{1/p-\beta}.$$
and for every  $P\subset W\subset Q \in \mathcal{P}^j$ with $P \in \mathcal{P}^{k+1}$ and $W \in \mathcal{P}^{k}$ define
\begin{equation} \label{cv} c_P= (\inf \phi (P) -\inf \phi (W))|P|^{1/p-\beta}\end{equation}
Of course in this case $c_P\geq 0$ and
\begin{align*} |c_P| &\leq (\inf \phi (P) -\inf \phi (W))|P|^{1/p-\beta} \\
&\leq  |Q|^{s-1/p-\tilde{\beta}}  (diam \ W )^{\tilde{\beta}D} |P|^{1/p-\beta}\\
& \leq \frac{\Crr{kki}^{D\tilde{\beta}}}{\Crr{hh}^{\tilde{\beta}}}\Big( \frac{|P|}{|Q|}\Big)^{1/p} |Q|^{s-\tilde{\beta}} |P|^{\tilde{\beta}-\beta}\\
& \leq \frac{\Crr{kki}^{D\tilde{\beta}}}{\Crr{hh}^{\tilde{\beta}}} \Cll{super}  \Big( \frac{|P|}{|Q|}\Big)^{1/p} |Q|^{s-\beta} \Big(\frac{|P|}{|Q|}\Big)^{\tilde{\beta}-\beta}.
\end{align*}
Here $\Crr{super}= \sup_{Q\in \mathcal{P}}|Q|^{\tilde{\beta}-\beta}.$
Consequently 
\begin{align}\label{adapt} \sum_{\substack{P\in \mathcal{P}^{k+1}\\ P\subset Q}} |c_P|^p   & \leq  |Q|^{p(s-\beta)}   \Crr{maior}^{(k+1-j)p(\tilde{\beta}-\beta)} \sum_{\substack{P\in \mathcal{P}^{k+1}\\ P\subset Q}} \frac{ \Crr{super}^p\Crr{kki}^{D\tilde{\beta}p}}{\Crr{hh}^{p \beta}} \frac{|P|}{|Q|} \nonumber \\
&\leq |Q|^{p(s-\beta)} \frac{ \Crr{super}^p \Crr{kki}^{D\tilde{\beta}p}}{\Crr{hh}^{\beta p}} \Crr{maior}^{(k+1-j)p(\tilde{\beta}-\beta)}.\end{align}
so
\begin{align*}\Big(  \sum_k \big( \sum_{\substack{P\in \mathcal{P}^{k+1}\\ P\subset Q}} |c_P|^p\big)^{\tilde{q}/p} \Big)^{1/{\tilde{q}}}  &\leq  \frac{ \Crr{super}\Crr{kki}^{D\tilde{\beta}}}{\Crr{hh}^\beta} \Big(\frac{1}{1-\Crr{maior}^{\tilde{q} (\tilde{\beta}-\beta) }}\Big)^{1/\tilde{q}}  |Q|^{s-\beta}.\end{align*}
This implies that 
$$\tilde{\phi}=\sum_{\substack{ P\in \mathcal{P}\\ P\subset Q}} c_P a_P, $$
where $a_P$ is the canonical $(\beta,p)$-atom on $P$,  is a $\mathcal{B}^\beta_{p,\tilde{q}}(Q,\mathcal{P}_Q\mathcal{A}^{sz}_{\beta,p})$-representation of  a function $\tilde{\phi}$.  From (\ref{cv}) it follows that
$$\sum_{k\leq N} \sum_{P\in \mathcal{P}^k} c_P a_P(x)= \min \phi(W)$$
for every $x \in W\in \mathcal{P}^{N}.$ In particular
$$\lim_{N\rightarrow \infty} \sum_{k\leq N} \sum_{P\in \mathcal{P}^k} c_P a_P(x) =\phi(x).$$
for almost every $x$, so $\tilde{\phi}=\phi$. So
\begin{equation}\label{inbv} |\Crr{at3}\phi|_{\mathcal{B}^\beta_{p,\tilde{q}}(Q,\mathcal{P}_Q,\mathcal{A}^{sz}_{\beta,p})}\leq  \frac{1}{2} |Q|^{s-\beta}\end{equation} 
where
$$\Crr{at3}= \Big( 2\frac{\Crr{super} \Crr{kki}^{D\tilde{\beta}}}{\Crr{hh}^\beta} \Big(\frac{1}{1-\Crr{maior}^{\tilde{q}(\tilde{\beta}-\beta)}}\Big)^{1/\tilde{q}}\Big)^{-1}.$$

In the general case, note that $\phi_+(x)=\max\{ \phi,0\}, \phi_-(x)=\min\{-\phi,0\} \in \mathcal{A}^{h}_{s,\tilde{\beta},p}(Q)$ and $\phi=\phi_+-\phi_-$. Applying (\ref{inbv}) to $\phi_+$ and $\phi_-$ we obtain (\ref{final1}).

The second inclusion (\ref{final2}) in the proposition can be obtained taking $\tilde{\beta}=\beta$ in (\ref{adapt}) and a few modifications in the above argument.  By Proposition \ref{besova} we have (\ref{inclusion1}).
\end{proof}

\subsection{Using bounded variation atoms}  Now suppose that  $I$ is an interval of $\mathbb{R}$ of length $1$, $m$ is the Lebesgue measure on it and the partitions in $\mathcal{P}$ are partitions by intervals. 

\begin{proposition}  If 
$$0< s\leq \beta \leq \frac{1}{p}$$
then  $$\Crr{at1} \mathcal{A}^{bv}_{s, \beta,p}(Q)\subset \mathcal{A}^{bs}_{s,\beta,p,\infty}(Q).$$
for every $Q\in \mathcal{P}.$
If $$0< s\leq \beta < \tilde{\beta} \leq   \frac{1}{p}$$           
then  $$\Crr{at2} \mathcal{A}^{bv}_{s, \tilde{\beta},p}(Q)\subset \mathcal{A}^{bs}_{s,\beta,p,q}(Q)$$
for every $q \in [1,\infty]$. In particular
$$\mathcal{B}^s_{p,q}= \mathcal{B}^s_{p,q}(\mathcal{A}^{bv}_{s, \beta,p})=\mathcal{B}^s_{p,q}(\mathcal{A}^{bv}_{s, \tilde{\beta},p}).$$
and the corresponding norms are equivalents.
\end{proposition}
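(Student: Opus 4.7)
The plan is to invoke Proposition~\ref{besova}, so I need to produce the sandwich $\mathcal{A}^{sz}_{s,p}(Q)\subset\mathcal{A}^{bv}_{s,\beta,p}(Q)\subset C\cdot\mathcal{A}^{bs}_{s,\beta,p,\tilde{q}}(Q)$ for suitable $C$ and $\tilde{q}$. The left inclusion is trivial: a Souza atom is constant on $Q$, so its $1/\beta$-variation (computed only over interior points) vanishes and the $L^{\infty}$ bound is built into the definition. The substantive work is therefore to exhibit, for each BV atom $a\in\mathcal{A}^{bv}_{s,\tilde{\beta},p}(Q)$ (with $\tilde{\beta}=\beta$ for the first claim and $\tilde{\beta}>\beta$ for the second), a $\mathcal{B}^{\beta}_{p,\tilde{q}}(Q,\mathcal{P}_Q,\mathcal{A}^{sz}_{\beta,p})$-representation of $a$ whose cost is $\lesssim|Q|^{s-\beta}$.

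By splitting $a$ into real and imaginary parts I may assume it is real-valued; since $a\in L^{\infty}\subset L^{1}$ the averages $\bar{a}(P)=|P|^{-1}\int_{P}a\,dm$ are well defined. Mimicking the construction used in Proposition~\ref{hold}, let $Q\in\mathcal{P}^{k_0}$ and set
\[
c_Q=\bar{a}(Q)\,|Q|^{1/p-\beta},\qquad c_P=\bigl(\bar{a}(P)-\bar{a}(W)\bigr)\,|P|^{1/p-\beta}
\]
for each $P\in\mathcal{P}^{k+1}$ contained in some $W\in\mathcal{P}^{k}$ with $W\subset Q$. With $a_P^{sz}$ the canonical $(\beta,p)$-Souza atom on $P$, the partial sum $c_Qa_Q^{sz}+\sum_{k\le N-1}\sum_{P}c_Pa_P^{sz}$ telescopes to the constant $\bar{a}(\tilde{W})$ on each $\tilde{W}\in\mathcal{P}^{k_0+N}$, $\tilde{W}\subset Q$, which is precisely the conditional expectation $\mathbb{E}[a\mid\mathcal{F}_N]$ for the $\sigma$-algebra generated by these cells. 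Since $\cup_{k}\mathcal{P}^{k}$ generates $\mathbb{A}$ by $\Crr{g7}$, Doob's martingale convergence theorem together with boundedness of $a$ yields convergence to $a$ almost everywhere and in $L^{p}$.

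The heart of the proof is the uniform-in-$k$ estimate
\[
\sum_{\substack{P\in\mathcal{P}^{k_0+k+1}\\ P\subset Q}}|c_P|^{p}\le\Crr{menor}^{-1}\,\Crr{maior}^{kp(\tilde{\beta}-\beta)}\,|Q|^{p(s-\beta)}.
\]
For each $W\in\mathcal{P}^{k_0+k}$ with $W\subset Q$ I use $\#\{P\subset W\}\le 1/\Crr{menor}$ and $|\bar{a}(P)-\bar{a}(W)|\le\operatorname{osc}_{\infty}(a,W)$ (both averages lie between the essential inf and sup of $a$ on $W$); factoring $|P|^{1-p\beta}=|P|^{1-p\tilde{\beta}}|P|^{p(\tilde{\beta}-\beta)}$, bounding the first piece by $|W|^{1-p\tilde{\beta}}$ and the second by $(\Crr{maior}^{k}|Q|)^{p(\tilde{\beta}-\beta)}$ reduces the task to estimating $\sum_{W}|W|^{1-p\tilde{\beta}}\operatorname{osc}_{\infty}(a,W)^{p}$. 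Because $p\tilde{\beta}\le1$, H\"older's inequality with conjugate pair $(1/(p\tilde{\beta}),1/(1-p\tilde{\beta}))$ (the case $p\tilde{\beta}=1$ being a trivial $\ell^{1}\text{-}\ell^{\infty}$ instance) gives
\[
\sum_{W}|W|^{1-p\tilde{\beta}}\operatorname{osc}_{\infty}(a,W)^{p}\le\Bigl(\sum_{W}\operatorname{osc}_{\infty}(a,W)^{1/\tilde{\beta}}\Bigr)^{p\tilde{\beta}}\Bigl(\sum_{W}|W|\Bigr)^{1-p\tilde{\beta}}.
\]
The second factor is $\le|Q|^{1-p\tilde{\beta}}$. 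For the first, pick interior points $y_{W}^{\pm}\in W$ nearly achieving the essential supremum and infimum of $a$ on $W$; since the $W$'s are disjoint intervals in $Q$, concatenating these pairs in left-to-right order produces an increasing chain in the interior of $Q$, so by definition of $\operatorname{var}_{1/\tilde{\beta}}$ the first factor is at most $\operatorname{var}_{1/\tilde{\beta}}(a,Q)^{p}\le|Q|^{p(s-1/p)}$. Multiplying through yields the displayed bound.

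Taking the $\ell^{\tilde{q}}$-norm over $k$: when $\tilde{\beta}=\beta$ the $k$-uniform bound gives a finite $\ell^{\infty}$-norm (first inclusion, with $\tilde{q}=\infty$); when $\tilde{\beta}>\beta$ the geometric factor $\Crr{maior}^{kp(\tilde{\beta}-\beta)}$ is $\ell^{\tilde{q}/p}$-summable for every $\tilde{q}\in[1,\infty]$ (second inclusion). Absorbing all multiplicative constants—including the normalization $\Crr{ba}^{-1}$ in the Besov atom definition—into the proportionality factors asserted in the statement, Proposition~\ref{besova} delivers the identifications $\mathcal{B}^{s}_{p,q}=\mathcal{B}^{s}_{p,q}(\mathcal{A}^{bv}_{s,\beta,p})=\mathcal{B}^{s}_{p,q}(\mathcal{A}^{bv}_{s,\tilde{\beta},p})$ with equivalent norms. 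The main obstacle is the key estimate: the BV hypothesis only supplies $\ell^{1/\tilde{\beta}}$-summability of the cell oscillations, whereas the Besov atom cost demands their $\ell^{p}$-sum weighted by $|W|^{1-p\tilde{\beta}}$; since $p\ge1\ge p\tilde{\beta}$, naive monotonicity points the wrong way, and the H\"older step is indispensable—it is available precisely because $p\tilde{\beta}\le1$ makes the defect exponent $1-p\tilde{\beta}$ a legitimate conjugate index, letting the volume weight be absorbed into $(\sum_{W}|W|)^{1-p\tilde{\beta}}\le|Q|^{1-p\tilde{\beta}}$.
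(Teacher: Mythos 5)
Your proof is correct, but it reaches the conclusion by a genuinely different route than the paper. The paper does not build any explicit atomic decomposition of a bounded-variation atom: it estimates $\sum_{W\in\mathcal{P}^k}|W|^{-\beta p}\,osc_p(a_Q,W)^p$ for \emph{every} level $k$ (including the coarse levels $k<j$ where $W\supset Q$, which need a separate short computation) and then invokes the mean-oscillation characterization $N_{osc}$ of Theorem \ref{alte} to conclude that $a_Q$ lies in a multiple of $\mathcal{A}^{bs}_{s,\beta,p,\tilde q}(Q)$. You instead construct the $\mathcal{B}^{\beta}_{p,\tilde q}(Q,\mathcal{P}_Q,\mathcal{A}^{sz}_{\beta,p})$-representation by hand, via the telescoping cell averages $\bar a(P)-\bar a(W)$ (a martingale decomposition in the style of the H\"older-atom argument of Proposition \ref{hold}, with $\inf_P\phi$ replaced by averages, which is the right substitute since BV atoms need not be continuous), and verify the cost directly from the definition of a Besov atom; this avoids the coarse-scale estimates entirely since your representation only involves cells $P\subset Q$, at the price of having to justify convergence of the partial sums to $a$ (which your appeal to L\'evy/Doob upward convergence and ${\Crr{g7}}$ handles). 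The analytic core is identical in both arguments: the H\"older step with conjugate exponents $1/(p\tilde\beta)$ and $1/(1-p\tilde\beta)$, legitimate exactly because $p\tilde\beta\le 1$, followed by the observation that the cell oscillations, sampled at near-extremal interior points and concatenated left to right, are controlled by $var_{1/\tilde\beta}(a,Q)$. The only points worth tightening are cosmetic: $osc_\infty$ is defined with an infimum over constants, so $|\bar a(P)-\bar a(W)|$ is bounded by \emph{twice} $osc_\infty(a,W)$ rather than by $osc_\infty(a,W)$ (a harmless factor), and the near-extremal points exist only up to an $\epsilon$ that you should let tend to zero at the end.
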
 
\begin{proof} Suppose
$$0< s \leq  \beta  \leq \tilde{\beta}\leq 1$$
Let  $Q\in \mathcal{P}^j$ and $a_Q\in \mathcal{A}^{bv}_{s, \tilde{\beta},p}(Q)$. We have
$$|a_Q|_p\leq (   |Q||Q|^{sp-1}   )^{1/p}=|Q|^s\leq \Cll{supe} |Q|^{s-\beta},$$
where $\Crr{supe}= \sup_{Q\in \mathcal{P}} |Q|^\beta.$
Note that  for every $k\geq j$
\begin{align*}
&\sum_{W\in \mathcal{P}^k} |W|^{-\beta p} osc_p(a_Q,W)^p  \\
 &\leq  
\sum_{W \subset Q, W\in \mathcal{P}^k} |W|^{1-\beta p} osc_\infty(a_Q,W)^p  \\
&\leq \big( \sum_{W \subset Q, W\in \mathcal{P}^k}  |W|^{\frac{1-\beta p}{1-\tilde{\beta} p}} \big)^{1-\tilde{\beta} p}  \big( \sum_{W \subset Q, W\in \mathcal{P}^k}   osc_\infty(a_Q,W)^{1/\tilde{\beta}}   \big)^{\tilde{\beta} p} \\ 
&\leq   \Big( \max_{W \subset Q, W\in \mathcal{P}^k} |W|^{\frac{(\tilde{\beta}-\beta) p}{1-\tilde{\beta} p} } \Big)^{1-\tilde{\beta} p}  \big( \sum_{W \subset Q, W\in \mathcal{P}^k}  |W| \big)^{1-\tilde{\beta} p}   (var_{1/\tilde{\beta}}(a_Q,Q))^{p}  \\
&\leq \Crr{maior}^{(k-j)(\tilde{\beta}-\beta)p}|Q|^{(\tilde{\beta}-\beta)p}   |Q|^{1-\tilde{\beta} p} |Q|^{sp-1}\\
&\leq \Crr{maior}^{(k-j)(\tilde{\beta}-\beta)p}  |Q|^{(s-\beta) p}.
\end{align*}
Note that in the case  $\tilde{\beta}p=1$ the argument above needs a simple modification. For $k < j$ let $W_Q\in \mathcal{P}^k$ be such that $Q\subset W_Q$. Then 
\begin{align*}
&\sum_{W\in \mathcal{P}^k} |W|^{-\beta p} osc_p(a_Q,W)^p \leq  |W_Q|^{-\beta p} |Q|  |Q|^{sp -1}   \\
&\leq  |W_Q|^{-\beta p} |Q|^{sp}  =  \Big( \frac{|Q|}{|W_Q|}\Big)^{\beta p}  |Q|^{(s-\beta)p}\leq \Crr{maior}^{(j-k)\beta p}  |Q|^{(s-\beta)p}.
\end{align*}
By Theorem \ref{alte} we have that if $\beta=\tilde{\beta}$ then  $\Cll{at1} \mathcal{A}^{bv}_{s, \beta,p}(Q)\subset \mathcal{A}^{bs}_{s,\beta,p,\infty}(Q)$ for some $\Crr{at1} > 0$, and if $\beta < \tilde{\beta}$ we have   that for every $q \in [1,\infty)$ 
\begin{align*}
&|I|^{-s}|a_Q|_p+\Big( \sum_{k} \big( \sum_{W\in \mathcal{P}^k} |W|^{-sp} osc_p(a_Q,W)^p \big)^{q/p}\Big)^{1/q} \\
&\leq \Big(  \Crr{supe} |I|^{-s}+\big( \frac{1}{1-\lambda^{q(\tilde{\beta}-\beta)}}    +  \frac{1}{1-\Crr{maior}^{q\beta}}  \big)^{1/q} \Big)   |Q|^{s-\beta} .\\
\end{align*}
so $\Cll{at2} \mathcal{A}^{bv}_{s, \tilde{\beta},p}(Q)\subset \mathcal{A}^{bs}_{s,\beta,p,q}(Q)$ for some $\Crr{at2} > 0$.
\end{proof}

\section{Dirac's approximations} 
We will use the Haar basis and notation defined by Section \ref{haar}. For every $x_0 \in I$ and $k_0 \in \mathbb{N}$  define the finite family 
$$\mathcal{S}_{x_0}^{k_0} =   \{ S \colon \ S=(S_1,S_2) \in \mathcal{H}(Q),\ with \ Q \in \mathcal{P}^k, \ k < k_0, \ x_0 \in \cup_{a=1,2} \cup_{P \in S_{a}} P   \}$$
Let $N(x_0,k_0)=\# \mathcal{S}_{x_0}^{k_0}$. Then we can enumerate the elements $$S^1, S^2, \dots, S^{N(x_0,k_0)}$$ of $\mathcal{S}_{x_0}^{k_0}$ 
such that $S^i=(S^i_1,S^i_2)$ satisfies
$$x_0 \in \cup_{P \in S^i_{a_i}} P $$
for some $a_i \in \{1,2\}$ and
$$\cup_{P \in S^{i+1}_1\cup S^{i+1}_2} P     \subset \cup_{Q \in S^{i}_1\cup S^{i}_2} Q$$ 
for every $i$. 
Let 
$$\psi_0    =   \frac{1_{I}}{|I|}$$
and define for $i >0$ 
\begin{eqnarray*}
\psi_{i}&=& (-1)^{a_{i}+1}\frac{\sum_{R \in S^{i}_{3-a_{i}}}|R|}{\sum_{Q \in S^{i-1}_{a_{i-1}}} |Q|} \big( \frac{ \sum_{P \in S^{i}_1}1_{P}}{\sum_{P \in S^{i}_1}|P|} -  \frac{ \sum_{R \in S^{i}_2}1_{R}}{\sum_{R \in S^{i}_2}|R|} \big)  \\
&=&   (-1)^{a_{i}+1} \frac{\sum_{R \in S^{i}_{3-a_{i}}}|R|}{\sum_{Q \in S^{i-1}_{a_{i-1}}} |Q|} m_{S^i} \phi_i.
\end{eqnarray*}
One can prove by induction on $j$ that for $j > 0$
$$\sum_{i=0}^j  \psi_i = \frac{\sum_{P \in S^{j}_{a_j}}1_{P} }{\sum_{P \in S^{j}_{a_j}}|P|}.$$
and in particular 
$$\sum_{i=0}^{N(x_0,k_0)}  \psi_i (x_0)= \frac{1_{Q_{k_0}} }{|Q_{k_0}|},$$
where $x_0 \in Q_{k_0} \in \mathcal{P}^{k_0}.$ In other words 
\begin{equation}\label{part}  \frac{1}{ |I|^{1/2} }\phi_{I} + \sum_{i=1}^{N(x_0,k_0)}  (-1)^{a_i+1}  \Big( \frac{\sum_{R \in S^{i}_{3-a_i}}|R|}{\sum_{Q \in S^{i-1}_{a_{i-1}}} |Q|}  \Big)  m_{S^i}    \phi_{S^i}=\frac{1_{Q_{k_0}} }{|Q_{k_0}|},\end{equation}  
Note that 
\begin{align}
\Big( \frac{\sum_{R \in S^{i}_{3-a_i}}|R|}{\sum_{Q \in S^{i-1}_{a_{i-1}}} |Q|}  \Big) m_{S^i}&= \Big( \frac{\sum_{R \in S^{i}_{3-a_i}}|R|}{\sum_{Q \in S^{i-1}_{a_{i-1}}} |Q|}  \Big) \Big(    \frac{\sum_{R \in S_2^i} |R| + \sum_{P \in S_1^i} |P|  }{(\sum_{R \in S_2^i} |R| ) (\sum_{P \in S_1^i} |P|)}   \Big)^{1/2} \nonumber \\
\label{igual} &= \Big( \frac{\sum_{R \in S^{i}_{3-a_i}}|R|}{\sum_{P \in S^{i}_{a_i}} |P|}  \Big)^{1/2}  \frac{1}{(\sum_{Q \in S^{i-1}_{a_{i-1}}} |Q|)^{1/2}}  
\end{align}
Multiplying (\ref{part}) by $f$ and integrating it  term by term, and using (\ref{igual}) we obtain 
$$\frac{d_{I}}{|I|^{1/2}} + \sum_{i=1}^{N(x_0,k_0)}  (-1)^{a_i+1}  \Big( \frac{\sum_{R \in S^{i}_{3-a_i}}|R|}{\sum_{P \in S^{i}_{a_i}} |P|}  \Big)^{1/2}  \frac{1}{(\sum_{Q \in S^{i-1}_{a_{i-1}}} |Q|)^{1/2}}    d _{S^i} =\int f \cdot \frac{1_{Q_{k_0}}}{|Q_{k_0}|} \ dm.$$

\noindent If  $f \in L^\beta$, with $\beta > 1$,   it  can be written as 
$$f = \sum_{S\in \hat{\mathcal{H}}(\mathcal{P})} d_S\phi_S$$
with $d_S= \int f \phi_S \ dm$, where this series converges  unconditionally  on $L^\beta$. Let
\begin{equation}\label{k0} f_{k_0} = d_{I} \phi_{I}+ \sum_{k < k_0} \sum_{Q\in \mathcal{P}^k}\sum_{S\in \mathcal{H}(Q)} d_S\phi_S.\end{equation}
Then
\begin{align} f_{k_0}(x_0)  &= d_{I} \phi_{I} +\sum_{i=1}^{N(x_0,k_0)} d_{S^i} \phi_{S^i}(x_0).\nonumber \\
&=\frac{d_{I}}{|I|^{1/2}}+  \sum_{i=1}^{N(x_0,k_0)} (-1)^{a_i+1}d_{S^i}  \frac{1}{m_{S^i}}  \frac{1}{\sum_{P \in S_{a_i}^i} |P| }  \nonumber \\
&=\frac{d_{I}}{|I|^{1/2}}+\sum_{i=1}^{N(x_0,k_0)} (-1)^{a_i+1} d_{S^i}  \Big(    \frac{(\sum_{R \in S_2^i} |R| ) (\sum_{P \in S_1^i} |P|)}{\sum_{R \in S_2^i} |R| + \sum_{P \in S_1^i} |P|  }   \Big)^{1/2} \frac{1}{\sum_{P \in S_{a_i}^i} |P| }  \nonumber  \\
&=\frac{d_{I}}{|I|^{1/2}} +   \sum_{i=1}^{N(x_0,k_0)} (-1)^{a_i+1} d_{S^i}   \Big( \frac{\sum_{R \in S^{i}_{3-a_i}}|R|}{\sum_{P \in S^{i}_{a_i}} |P|}  \Big)^{1/2}  \frac{1}{(\sum_{Q \in S^{i-1}_{a_{i-1}}} |Q|)^{1/2}}   \nonumber \\
&= \int f \cdot \frac{1_{Q_{k_0}}}{|Q_{k_0}|} \ dm. \nonumber 
\end{align}
Let 
\begin{equation}\label{reep} f =  \sum_k  \sum_{P\in \mathcal{P}^{k}} k_P a_P,\end{equation}
be the series given by (\ref{sumf}). Note that
\begin{equation} f_{k_0} = \sum_{k\leq k_0}  \sum_{P\in \mathcal{P}^{k}} k_P a_P,\end{equation}

Consequently 

\begin{proposition}[Dirac's Approximations]\label{boup} Let $f \in L^\beta$, with $\beta > 1$. Let
$$ f = \sum_k  \sum_{P\in \mathcal{P}^{k}} k_P a_P.$$
\begin{itemize}
\item[A.] If  this representation is either as in (\ref{sumf}) or $k_P\geq 0$ for every $P$ then we have For every $Q \in \mathcal{P}$ 
$$\Big| \sum_{J\in \mathcal{P}, Q \subset J} k_J  |J|^{s-1/p} \Big| \leq |f1_Q|_\infty.$$
\item[B.]  In the case of the representation  (\ref{sumf}) we also have 
$$\sum_{J\in \mathcal{P}, Q \subset J} k_J  |J|^{s-1/p} =\int f \cdot \frac{1_{Q}}{|Q|} \ dm.$$
\end{itemize}
\end{proposition}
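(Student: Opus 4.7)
Both parts reduce to the Haar computation carried out in the paragraph immediately preceding the statement. For $f \in L^\beta$ with $\beta>1$ represented as in (\ref{sumf}), that computation shows that the Haar partial sum $f_{k_0}$ defined in (\ref{k0}) satisfies
$$f_{k_0}(x_0) = \int f \cdot \frac{1_{Q_{k_0}}}{|Q_{k_0}|} \, dm$$
whenever $x_0 \in Q_{k_0} \in \mathcal{P}^{k_0}$. On the atom side, passing from Haar coefficients to the coefficients $k_P$ via the recipe leading to (\ref{sumf}) identifies $f_{k_0}$ with $\sum_{k\leq k_0}\sum_{P\in \mathcal{P}^k} k_P a_P$ (the Haar levels $k<k_0$ contribute atoms at levels $1,\ldots,k_0$, and the $d_I\phi_I$-term contributes the level-$0$ atom).

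Proof of B. Fix $Q \in \mathcal{P}^{k_0}$ and $x_0 \in Q$. Since $\mathcal{P}$ is a good grid, property ${\Crr{g5}}$ forces the sets $P\in\mathcal{P}^k$ with $k\leq k_0$ and $x_0\in P$ to be precisely the (nested) chain $\{J\in\mathcal{P}: Q\subseteq J\}$, each such $J$ being hit exactly once. Evaluating the atomic representation of $f_{k_0}$ at $x_0$ therefore gives
$$f_{k_0}(x_0) \;=\; \sum_{J\in\mathcal{P},\ Q\subseteq J} k_J\, |J|^{s-1/p},$$
and comparing with the integral expression above yields B.

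Proof of A. In the standard case (\ref{sumf}) A is immediate from B since $|(1/|Q|)\int_Q f\, dm|\leq |f\cdot 1_Q|_\infty$. In the case $k_P\geq 0$, set $g_{k_0}=\sum_{k\leq k_0}\sum_{P\in\mathcal{P}^k}k_P a_P$; every summand is non-negative, so the sequence $(g_{k_0})$ is pointwise non-decreasing. Because the representation converges to $f$ in $L^p$ (absolute convergence in $L^p$ being part of the definition of a $\mathcal{B}^s_{p,q}$-representation), a subsequence converges to $f$ almost everywhere, and monotonicity promotes this to $g_{k_0}\to f$ a.e.; in particular $g_{k_0}\leq f$ a.e. The same indexing argument as in B shows $g_{k_0}(x_0)=\sum_{J\supseteq Q} k_J|J|^{s-1/p}$ for $x_0\in Q\in\mathcal{P}^{k_0}$, and choosing $x_0$ off a null set gives the bound $\sum_{J\supseteq Q} k_J|J|^{s-1/p}\leq |f\cdot 1_Q|_\infty$. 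The absolute value is trivial since every term is non-negative.

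The only mildly delicate point is promoting $L^p$-convergence to a.e.\ convergence in the monotone case, which is standard. Otherwise the statement is essentially a bookkeeping corollary of the Haar/atom reindexing done just above the proposition, and I do not anticipate any real obstacle.
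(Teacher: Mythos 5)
Your proposal is correct and follows essentially the same route as the paper: part B and the standard case of part A are read off from the identity $f_{k_0}(x_0)=\int f\cdot \frac{1_{Q_{k_0}}}{|Q_{k_0}|}\,dm$ established in the computation immediately preceding the proposition, combined with the nesting of the grid which identifies $f_{k_0}(x_0)$ with $\sum_{J\supseteq Q}k_J|J|^{s-1/p}$. The paper dismisses the nonnegative case of A as obvious; your monotone-convergence argument (partial sums nondecreasing, hence a.e.\ convergent to $f$ and bounded by $f$) is exactly the detail being elided there.
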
 
\begin{proof} We have that $A.$ is obvious if $k_P\geq 0$ for every $P$. In the other case note that 
for  every $x_0 \in Q\in \mathcal{P}^{k_0}$ we have
 $$f_{k_0}(x_0)=\sum_{J\in \mathcal{P}, Q\subset J} k_J  |J|^{s-1/p}=  \int f \cdot \frac{1_{Q}}{|Q|} \ dm.$$
so A. and  B. follows.
\end{proof}

\newpage

\centerline{ \bf III. APPLICATIONS.}
\addcontentsline{toc}{chapter}{\bf  III. APPLICATIONS.}
\vspace{1cm}

\fcolorbox{black}{white}{
\begin{minipage}{\textwidth}
\noindent   In Part III. we  suppose that $0< s<1/p$, $p\in [1,\infty)$ and $q\in [1,\infty]$. \end{minipage} 
}
\ \\ \\

\section{Pointwise multipliers acting on $\mathcal{B}^s_{p,q}$}

Here we will apply the previous sections to study pointwise multipliers of $\mathcal{B}^s_{p,q}$. To be more precise, Let $g\colon I \rightarrow \mathbb{C}$ be a measurable function. We say that $g$ is  a pointwise multiplier acting on $\mathcal{B}^s_{p,q}$ if  the transformation
$$G(f)=gf$$
defines  a bounded operator in $\mathcal{B}^s_{p,q}$. We denote the set of pointwise multipliers by $M(\mathcal{B}^s_{p,q})$. We can consider the norm on $M(\mathcal{B}^s_{p,q})$ given by
$$|g|_{M(\mathcal{B}^s_{p,q})}=\sup\{|gf|_{\mathcal{B}^s_{p,q}}\ s.t. \ |f|_{\mathcal{B}^s_{p,q}}\leq 1    \}.$$
Of course a necessary condition for  a function to be a multiplier is that 

$$\mathcal{B}^s_{p,q,selfs}=\{ g\in \mathcal{B}^s_{p,q}\colon \sup_{a_Q \in \mathcal{A}^{sz}_{s,p}} |ga_Q|_{\mathcal{B}^s_{p,q}} < \infty\}$$ 
Denote
$$|g|_{\mathcal{B}^s_{p,q,selfs}}=\sup_{a_Q \in \mathcal{A}^{sz}_{s,p}} |ga_Q|_{\mathcal{B}^s_{p,q}}.$$      
The linear space $\mathcal{B}^s_{p,q,selfs}$ endowed with $|\cdot|_{\mathcal{B}^s_{p,q,selfs}}$is a normed space introduced by  Triebel \cite{ns}. We have 
$$|g|_{\mathcal{B}^s_{p,q,selfs}}\leq |g|_{M(\mathcal{B}^s_{p,q})}$$ 
In the following three propositions we see that  many results of  Triebel \cite{ns}   and  Schneider and Vyb\'\i{ral}  \cite{corjan} for  Besov spaces in $\mathbb{R}^n$ can be easily moved to our setting. The simplest  case  occurs when $p=q=1$.

\begin{proposition} We have that $M(\mathcal{B}^s_{1,1})=\mathcal{B}^s_{1,1,selfs}$.
\end{proposition}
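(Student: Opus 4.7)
The forward inclusion $M(\mathcal{B}^s_{1,1})\subset\mathcal{B}^s_{1,1,selfs}$ and the norm inequality $|g|_{\mathcal{B}^s_{1,1,selfs}}\leq|g|_{M(\mathcal{B}^s_{1,1})}$ are already recorded just before the proposition; they both follow at once from the fact that a Souza atom $a_Q$ admits the single-term representation $1\cdot a_Q$, so $|a_Q|_{\mathcal{B}^s_{1,1}}\leq 1$. The real content is therefore the reverse inclusion with the opposite norm bound, i.e.\ upgrading self-multiplicativity on individual Souza atoms to multiplication on the whole space.

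The plan is a direct atomic substitution. Fix $g\in\mathcal{B}^s_{1,1,selfs}$, $f\in\mathcal{B}^s_{1,1}$ and $\epsilon>0$. First choose a near-optimal $\mathcal{B}^s_{1,1}$-representation
$$f=\sum_{k}\sum_{Q\in\mathcal{P}^k}s_Q a_Q,\qquad \sum_{k,Q}|s_Q|\leq (1+\epsilon)|f|_{\mathcal{B}^s_{1,1}},$$
and then, for every $Q$ with $s_Q\neq 0$, a near-optimal $\mathcal{B}^s_{1,1}$-representation
$$g a_Q=\sum_{k'}\sum_{P\in\mathcal{P}^{k'}}t^Q_P b^Q_P,\qquad \sum_{k',P}|t^Q_P|\leq (1+\epsilon)|g|_{\mathcal{B}^s_{1,1,selfs}},$$
with $b^Q_P\in\mathcal{A}^{sz}_{s,1}(P)$. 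Formal substitution yields a double series $gf=\sum_{Q,P}s_Q t^Q_P b^Q_P$ whose total $\ell^1$ weight is bounded by $(1+\epsilon)^2|f|_{\mathcal{B}^s_{1,1}}|g|_{\mathcal{B}^s_{1,1,selfs}}$.

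The next step is to regroup this double sum into a bona fide $\mathcal{B}^s_{1,1}$-representation carrying one atom per $P\in\mathcal{P}$. Define $w_P:=\sum_Q s_Q t^Q_P b^Q_P$. Since each $b^Q_P$ is constant on $P$ with $|b^Q_P|_\infty\leq|P|^{s-1}$ and the scalar series is absolutely convergent, $w_P$ is almost-everywhere constant on $P$ with $|w_P|_\infty\leq\big(\sum_Q|s_Q||t^Q_P|\big)|P|^{s-1}$. Hence $w_P=c_P\bar a_P$ for some $\bar a_P\in\mathcal{A}^{sz}_{s,1}(P)$ and scalar $|c_P|\leq\sum_Q|s_Q||t^Q_P|$. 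Tonelli for nonnegative reals now gives
$$\sum_P|c_P|\leq\sum_Q|s_Q|\sum_P|t^Q_P|\leq(1+\epsilon)^2|f|_{\mathcal{B}^s_{1,1}}|g|_{\mathcal{B}^s_{1,1,selfs}},$$
and Proposition \ref{lp} with $t=p=1$ guarantees that the assembled series $\sum_{k}\sum_{P\in\mathcal{P}^k}c_P\bar a_P$ converges absolutely in $L^1$, necessarily to $gf$. Letting $\epsilon\to 0$ yields the desired multiplication bound.

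The only non-cosmetic issue is the regrouping step: absorbing, for each $P$, all atoms supported on $P$ into a single Souza atom $\bar a_P$ without blowing up the cost. This is painless here because $p=q=1$ reduces the cost to an honest $\ell^1$-norm, a countable sum of constants on $P$ is again constant on $P$, and nonnegative double series may be interchanged freely. All three features fail in the general $(p,q)$ regime, which is presumably why the statement is restricted to the simplest case.
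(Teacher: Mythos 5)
Your proof is correct and follows essentially the same route as the paper: substitute a near-optimal atomic representation of $f$ and bound $|g\,a_Q|_{\mathcal{B}^s_{1,1}}$ by $|g|_{\mathcal{B}^s_{1,1,selfs}}$ term by term, exploiting the $\ell^1$ structure of the cost. The only difference is that the paper invokes the countable triangle inequality for the $\mathcal{B}^s_{1,1}$-norm directly (valid since $\rho=1$ and the space is complete), whereas you re-derive it by hand through the explicit regrouping of the double sum into one Souza atom per $P$; both arguments share the same implicit step that $\sum_Q s_Q\, g\, a_Q$ converges to $gf$ in $L^1$.
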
 
\begin{proof} Let $g \in \mathcal{B}^s_{1,1,selfs}.$ Given $f\in  \mathcal{B}^s_{1,1}$ and  $\epsilon > 0$  one can find a $\mathcal{B}^s_{1,1}$-representation 
$$f=\sum_{k}\sum_{Q\in \mathcal{P}^k} c_Qa_Q,$$
where $a_Q$ is a $(s,1)$-Souza's atom and 
$$\sum_k \sum_{Q\in \mathcal{P}^k} |c_Q| < (1+\epsilon) |f|_{\mathcal{B}^s_{1,1}}$$
so 
$$\sum_k \sum_{Q\in \mathcal{P}^k} |c_Q ga_Q|_{\mathcal{B}^s_{1,1}} < (1+ \epsilon) |g|_{\mathcal{B}^s_{1,1,selfs}}|f|_{\mathcal{B}^s_{1,1}}.$$ 
and consequently 
$$|gf|_{\mathcal{B}^s_{1,1}}=|\sum_{k}\sum_{Q\in \mathcal{P}^k} c_Qga_Q|_{\mathcal{B}^s_{1,1}}\leq (1+ \epsilon)  |g|_{\mathcal{B}^s_{1,1,selfs}}|f|_{\mathcal{B}^s_{1,1}}.$$
Since $\epsilon$ is arbitrary we get
$$|gf|_{\mathcal{B}^s_{1,1}}\leq  |g|_{\mathcal{B}^s_{1,1,selfs}} |f|_{\mathcal{B}^s_{1,1}}.$$
\end{proof}

\begin{lemma}\label{incint3} Let $W\in \mathcal{P}$. The restriction application
$$r\colon \mathcal{B}^s_{p,q}(I, \mathcal{P}, \mathcal{A}^{sz}_{s,p})\rightarrow  \mathcal{B}^s_{p,q}(W, \mathcal{P}_W, \mathcal{A}^{sz}_{s,p})$$
given by $r(f)=1_Wf$ is continuous. Indeed there is $\Cll{incint} \geq 1$, that does not depend on $W$, such that 
\begin{enumerate}
\item[A.] For every $f \in \mathcal{B}^s_{p,q}$ we have 
\begin{equation}\label{incint2} |1_Wf|_{\mathcal{B}^s_{p,q}(W, \mathcal{P}_W, \mathcal{A}^{sz}_{s,p})}\leq \Crr{incint} |f|_{\mathcal{B}^s_{p,q}}. \end{equation}
In particular $1_W\in M(\mathcal{B}^s_{p,q})$.
\item[B.] For every  $\mathcal{B}^{s+}_{p,q}$-representation 
$$f=\sum_{k}\sum_{Q\in \mathcal{P}^k} c_Qa_Q$$
 one can find a $\mathcal{B}^{s+}_{p,q}(W, \mathcal{P}_W, \mathcal{A}^{sz}_{s,p})$-representation 
$$1_Wf= \sum_{k}\sum_{Q\in \mathcal{P}^k} d_Qa_Q$$
such that 
\begin{equation}\label{incint2b}
\Big( \sum_k \big(\sum_{Q\in \mathcal{P}^k}  (d_Q)^p\big)^{q/p} \Big)^{1/q} \leq \Crr{incint} \Big( \sum_k \big(\sum_{Q\in \mathcal{P}^k}  (c_Q)^p\big)^{q/p} \Big)^{1/q} . 
\end{equation} 
Moreover  $d_Q\neq 0$ implies $Q\subset \ supp \ f$.
\end{enumerate}
\end{lemma}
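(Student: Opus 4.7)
I will construct the desired $\mathcal{P}_W$-representation of $1_W f$ directly from any given $\mathcal{P}$-representation and then estimate its cost. Fix $W \in \mathcal{P}^{k_W}$ and a representation $f = \sum_k \sum_{Q \in \mathcal{P}^k} c_Q a_Q$. By the nesting of partitions in a good grid (properties $\Crr{g4}$--$\Crr{g5}$), every $Q \in \mathcal{P}$ satisfies exactly one of $Q \cap W = \emptyset$, $Q \subseteq W$, or $W \subsetneq Q$. Atoms of the first type contribute nothing to $1_W f$. Atoms with $Q \subseteq W$ and $Q \neq W$ are already supported on an element of $\mathcal{P}_W$, so each such $c_Q a_Q$ stays unchanged as part of level $i = k(Q) - k_W \geq 1$ in $\mathcal{P}_W$. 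Atoms with $W \subseteq Q$ are constant on $W$ with some value $v_Q$ satisfying $|v_Q| \leq |Q|^{s - 1/p}$; writing $1_W a_Q = v_Q |W|^{1/p - s} \tilde{a}_W$ with $\tilde{a}_W$ the canonical Souza atom on $W$, these contributions merge into a single level-$0$ coefficient
$$d_W = \sum_{k \leq k_W} c_{Q_k} v_{Q_k} |W|^{1/p - s},$$
where $Q_k$ is the unique ancestor of $W$ at level $k$. Absolute convergence in $L^p$ of the rearranged series is clear since only finitely many terms are collapsed into the level-$0$ atom on $W$. Property $\Crr{g6}$ gives $|W|/|Q_k| \leq \Crr{maior}^{k_W - k}$, hence with $\beta := 1/p - s > 0$,
$$|d_W| \leq \sum_{k \leq k_W} |c_{Q_k}| \Crr{maior}^{(k_W - k)\beta}.$$

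To prove (\ref{incint2b}) uniformly in $W$, set $A_k = (\sum_{Q \in \mathcal{P}^k} |c_Q|^p)^{1/p}$. The levels $i \geq 1$ of the new representation contribute at most $\sum_{k > k_W} A_k^q$ to the $q$-th power of the cost, because $d_P = c_P$ there. For the level-$0$ contribution $|d_W|^q$, since $|c_{Q_k}| \leq A_k$, H\"older's inequality for the conjugate pair $(q, q')$ applied against the summable geometric weight $\Crr{maior}^{(k_W - k)\beta}$ gives $|d_W|^q \leq C^q \sum_{k \leq k_W} A_k^q$ with $C$ depending only on $q, \beta, \Crr{maior}$ (with the obvious modifications at $q = 1$ and $q = \infty$). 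Adding both contributions yields (\ref{incint2b}) with $\Crr{incint} = (C^q + 1)^{1/q}$, independent of $W$. Part A then follows by taking the infimum over all $\mathcal{P}$-representations of $f$, and $1_W \in M(\mathcal{B}^s_{p,q})$ is immediate from the induced-space inclusion being a weak contraction.

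For part B, if the original representation lies in $\mathcal{B}^{s+}_{p,q}$ then each $c_Q \geq 0$, each $a_Q$ is the canonical Souza atom, and each $v_{Q_k} = |Q_k|^{s - 1/p} > 0$; hence every $d_P$ from the construction is non-negative and we obtain a $\mathcal{B}^{s+}_{p,q}(W, \mathcal{P}_W, \mathcal{A}^{sz}_{s,p})$-representation. If $d_P > 0$, either $P \subsetneq W$ and $d_P = c_P > 0$ so that $f \geq c_P a_P > 0$ on $P$, or $P = W$ and some ancestor $Q_k \supseteq W$ has $c_{Q_k} > 0$ so that $f > 0$ on $Q_k \supseteq W$; in either case no cancellation is possible because every other summand is non-negative, hence $P \subset supp \ f$. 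The main obstacle is the level-$0$ estimate: $d_W$ is a single scalar aggregating contributions from all ancestors of $W$, so a direct bound against a single $A_k$ is unavailable; one must exploit the geometric decay $\Crr{maior}^{(k_W - k)\beta}$---active precisely because of the hypothesis $s < 1/p$---paired with a H\"older-conjugate summable weight to recast the resulting convolution-like sum as an $\ell^q$ bound with a $W$-independent constant.
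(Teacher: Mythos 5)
Your proof is correct. The opening move is the same as the paper's: classify each $Q$ against $W$ as disjoint, contained in $W$, or an ancestor of $W$, and observe that $1_W a_Q$ is respectively zero, $a_Q$ itself, or a Souza atom on $W$ with coefficient $(|W|/|Q|)^{1/p-s}\leq \Crr{maior}^{(k_W-k)(1/p-s)}$. Where you diverge is in the bookkeeping: the paper feeds exactly this single-atom decomposition of $h_Q=1_Wa_Q$ into the general Transmutation of Atoms result (Proposition \ref{trans}.A and \ref{trans}.B), which handles the cost estimate, the absolute convergence, and the positivity/support assertions of part B in one stroke. You instead exploit the degenerate structure of this particular transmutation --- levels below $k_W$ pass through unchanged, and all ancestor contributions collapse into the single scalar $d_W$ --- and reduce the cost bound to a one-dimensional H\"older estimate of $\sum_{k\leq k_W}A_k\Crr{maior}^{(k_W-k)(1/p-s)}$ against the summable geometric weight, which is in effect a hand-rolled special case of the convolution trick underlying Proposition \ref{trans}. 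Both routes are sound; the paper's is a one-liner given the machinery already built (and is the pattern reused for all the multiplier results that follow), while yours is self-contained, makes the $W$-independence of $\Crr{incint}$ completely explicit, and makes visible why the hypothesis $s<1/p$ is what drives the estimate. Your verification of part B (non-negativity of every $d_P$ and the support statement via $f\geq c_Pa_P$ pointwise for positive representations) is also correct and replaces the paper's appeal to Proposition \ref{trans}.B.
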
 
\begin{proof} Let $Q\in \mathcal{P}.$ Denote by $a_Q$ the canonical $(s,p)$-Souza's atom supported on $Q$.  If $W\cap Q=\emptyset$ we can write $1_Wa_Q= 0a_Q.$ If $Q\subset W$ then $1_Wa_Q= 1a_Q.$ If $W\subset Q$ then
$$1_Wa_Q= \Big(\frac{|W|}{|Q|}\Big)^{1/p-s}a_W,$$
where
$$ \Big(\frac{|W|}{|Q|}\Big)^{1/p-s}\leq \Crr{maior}^{(k_0(W)-k_0(Q))(1/p-s)}$$
%\sup_{R\in \mathcal{P}} |R|^{1/p-s}.$$
In every case we can write
$$h_Q=1_Wa_Q = \sum_k \sum_{\substack{P\in \mathcal{P}^k\\ P\subset Q}}    s_{P,Q} a_P,$$
with 
$$ \sum_{\substack{P\in \mathcal{P}^k\\ P\subset Q}}    |s_{P,Q}| ^p\leq \Crr{maior}^{(k-k_0(Q))(1-sp)}$$
%\max\{1,\sup_{R\in \mathcal{P}} |R|^{1-sp}\}.$$
By Proposition \ref{trans}.A and \ref{trans}.B  there is $\Crr{incint}$ such that  A. and  B.  hold. 
\end{proof}

\begin{proposition} We have that $ \mathcal{B}^s_{p,q,selfs} \subset L^\infty$ and this inclusion is continuous.
\end{proposition}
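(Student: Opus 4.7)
The plan is to show that, for $g\in\mathcal{B}^s_{p,q,selfs}$, the normalised $L^p$-averages of $|g|^p$ over every $W\in\mathcal{P}$ are uniformly bounded by a multiple of $|g|_{\mathcal{B}^s_{p,q,selfs}}^p$, and then to deduce $|g|_\infty\le C|g|_{\mathcal{B}^s_{p,q,selfs}}$ via martingale convergence. Note first that $g\in\mathcal{B}^s_{p,q,selfs}$ automatically forces $g\in\mathcal{B}^s_{p,q}\subset L^p$, since the canonical Souza's atom on $I$ is a nonzero constant times $1_I$, hence $g=|I|^{1/p-s}(g a_I)\in\mathcal{B}^s_{p,q}$.

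Fix now $W\in\mathcal{P}$ and let $a_W=|W|^{s-1/p}1_W$ be the canonical Souza's atom on $W$. By the very definition of $|\cdot|_{\mathcal{B}^s_{p,q,selfs}}$ one has $ga_W\in\mathcal{B}^s_{p,q}$ with $|ga_W|_{\mathcal{B}^s_{p,q}}\le|g|_{\mathcal{B}^s_{p,q,selfs}}$. Since $ga_W$ is supported on $W$, Lemma \ref{incint3} applied to this restriction, together with the identity $ga_W=|W|^{s-1/p}\,g 1_W$, produces
$$|g 1_W|_{\mathcal{B}^s_{p,q}(W,\mathcal{P}_W,\mathcal{A}^{sz}_{s,p})}\le \Crr{incint}\,|W|^{1/p-s}\,|g|_{\mathcal{B}^s_{p,q,selfs}}.$$

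Next I would apply Proposition \ref{lp} with $t=p$ \emph{inside} the induced Besov space on $W$. The decisive point is that the induced grid begins at $\mathcal{P}_W^0=\{W\}$ and, by axiom $\Crr{g6}$, satisfies $|\mathcal{P}_W^k|\le\Crr{maior}^k|W|$; consequently the constant $\Crr{kt}$ of Proposition \ref{lp} constructed from the induced grid is bounded by a multiple of $|W|^s$ uniformly in $W$. Multiplying with the previous inequality collapses the powers of $|W|$ exactly, yielding
$$\Bigl(\frac{1}{|W|}\int_W |g|^p\, dm\Bigr)^{1/p}\le C\,|g|_{\mathcal{B}^s_{p,q,selfs}}$$
with $C$ independent of $W$.

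To finish, because $\cup_k\mathcal{P}^k$ generates $\mathbb{A}$ by axiom $\Crr{g7}$ and $|\mathcal{P}^k|\to 0$, the L\'evy (upward) martingale convergence theorem applied to the $L^1$ function $|g|^p$ gives $\frac{1}{|Q_k(x)|}\int_{Q_k(x)}|g|^p\, dm\to|g(x)|^p$ for $m$-a.e.\ $x$, where $Q_k(x)$ denotes the unique element of $\mathcal{P}^k$ containing $x$. Passing to the limit in the uniform bound above yields $|g(x)|\le C|g|_{\mathcal{B}^s_{p,q,selfs}}$ a.e., which is the desired continuous inclusion. The heart of the argument is the scaling-balance step: the cost $|W|^{s-1/p}$ of multiplying by $a_W$ in the Besov norm is exactly compensated by the factor $|W|^s$ coming from the embedding constant on the induced grid, producing a \emph{normalised} $L^p$-average instead of an unnormalised integral bound. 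Applied to the \emph{full} grid, the same calculation yields only $(|W|^{-1}\int_W|g|^p)^{1/p}\lesssim|W|^{-s}|g|_{\mathcal{B}^s_{p,q,selfs}}$, which blows up as $|W|\to 0$; this is precisely why the restriction to the induced Besov space is indispensable.
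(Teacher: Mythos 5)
Your proof is correct and follows essentially the same route as the paper's: both pass from $|ga_W|_{\mathcal{B}^s_{p,q}}\le|g|_{\mathcal{B}^s_{p,q,selfs}}$ through Lemma \ref{incint3} to the induced space on $W$, apply Proposition \ref{lp} with $t=p$ there so that the embedding constant scales like $|W|^s$ and cancels the atom normalisation, obtain a uniform bound on the normalised $L^p$-averages, and conclude with L\'evy's upward theorem. Your bookkeeping of the $|W|^{1/p-s}$ factor is in fact slightly cleaner than the paper's.
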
  
\begin{proof} Let $g\in \mathcal{B}^s_{p,q,selfs}$. Then    $ga_Q\in \mathcal{B}^s_{p,q}$ and by Lemma \ref{incint3} we have
$$|g1_Q|_{ \mathcal{B}^s_{p,q}(Q, \mathcal{P}_Q, \mathcal{A}^{sz}_{s,p})}\leq \Crr{incint}|g1_Q|_{\mathcal{B}^s_{p,q}} \leq \Crr{incint} |g|_{\mathcal{B}^s_{p,q,selfs}}.$$

By Proposition \ref{lp}  (taking t = $p$) we have  $(s,p)$-Souza's atom $a_Q$ we have 
$$|ga_Q|_p\leq   \Crr{mult1}^{1+1/p} \Crr{co}(p,q, ( |\mathcal{P}^k_Q|^{sp})_k ) \Crr{incint}  |g|_{\mathcal{B}^s_{p,q,selfs}} < \Cll{nova} |Q|^s  |g|_{\mathcal{B}^s_{p,q,selfs}}.$$
for some constant $\Crr{nova}$. In other words
$$\Big( |Q|^{sp-1} \int_Q |g|^p \ dm \Big)^{1/p}  \leq \Crr{nova} |Q|^s  |g|_{\mathcal{B}^s_{p,q,selfs}},$$
so
$$\frac{1}{|Q|}  \int_Q |g|^p \ dm \leq   \Crr{nova}^p   |g|_{\mathcal{B}^s_{p,q,selfs}}^p, $$
for every $Q\in \mathcal{P}$.  Due to the fact that $\cup_k\mathcal{P}^k$ generates the $\sigma$-algebra $\mathbb{A}$, by  L\'evy's Upward Theorem (see Williams \cite{martin})  for almost every $x\in I$ the following holds.  If $x\in Q_k\in \mathcal{P}^k$ then 
$$\lim_k \frac{1}{|Q_k|}  \int_{Q_k} |g|^p \ dm = |g(x)|^p.$$
So 
$$|g|_\infty\leq  \Crr{nova}   |g|_{\mathcal{B}^s_{p,q,selfs}}.$$
\end{proof} 

\begin{figure}
\includegraphics[scale=0.5]{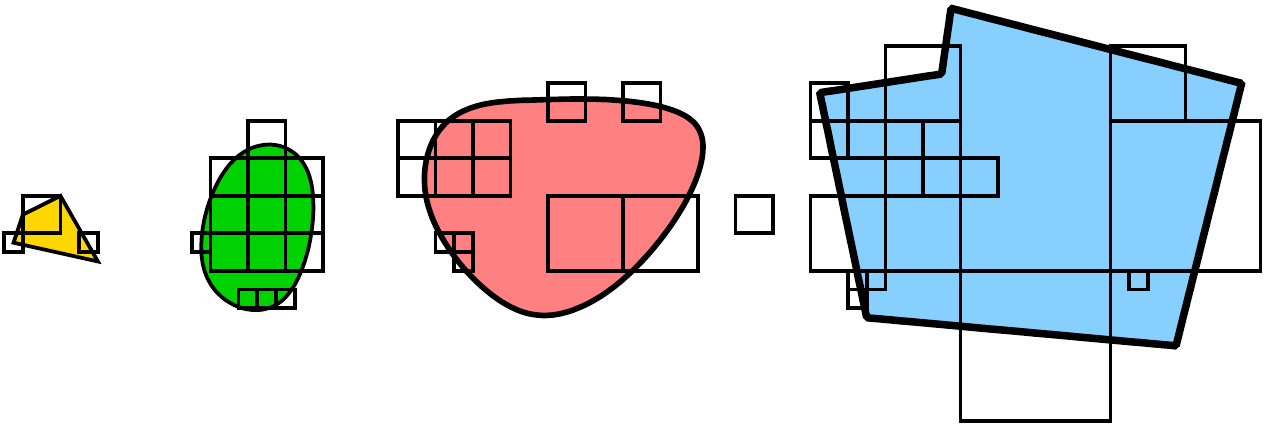}
\caption{ Non-Archimedean  behaviour. Illustration for Proposition \ref{sepa}. The filled regions are the supports of the functions $g_i$. The squares are the elements of the grid on which there are atoms contributing to  the representation of $f$. Every square intercepts at most one support, so each atom ``sees'' only the function whose support  is nearby.}
\end{figure}

\changei{In the caption of Figure 2, we replaced ". the" by ". The" (missing capitalization) and we replaced "contributing for" by "contribution to"}

\subsection{Non-Archimedean  behaviour in $\mathcal{B}^\beta_{p,\tilde{q},selfs}$} If we have a sequence $g_i \in M(\mathcal{B}^s_{p,q})$ we can get the naive  estimate 
$$|(\sum_i g_i)f|_{\mathcal{B}^s_{p,q}}\leq  \big( \sum_i |g_i|_{M(\mathcal{B}^s_{p,q})}\big)|f|_{\mathcal{B}^s_{p,q}}.$$

Nevertheless\change{we replaced "But it is remarkable that" by "Nevertheless, remarkably,"}, remarkably, sometimes one can get a far better estimate.  To state the result we need to define

$$|g|_{\mathcal{B}^{s,t}_{p,q,selfs}}=\sup_{\substack{ a_Q \in \mathcal{A}^{sz}_{s,p}\\ Q\in \mathcal{P}^j \\ j\geq t}} |ga_Q|_{\mathcal{B}^s_{p,q}}.$$
It is easy to see that this norm is equivalent to $|\cdot|_{\mathcal{B}^{s}_{p,q,selfs}}$.

\begin{proposition} \label{sepa} Let $\beta > s$. There is $\Cll{gen}$ with the following property. Let  $g_i \in  \mathcal{B}^{\beta,t_i}_{p,\tilde{q},selfs}$, with $i\in \Lambda \subset \mathbb{N}$,  and $t_i \in \mathbb{N}$.

Consider a function $f$ with  a $\mathcal{B}^s_{p,q}$-representation 
$$ f=\sum_k \sum_{Q\in \mathcal{P}^k} c_Q a_Q$$
satisfying 
\begin{itemize}
\item[A.] We have 
$$\sup_{\substack{ Q\in \mathcal{P} \\ c_Q\neq 0}}  \  \sum_{_{Q\cap supp \ g_i \neq \emptyset}} |g_i|_{ \mathcal{B}^{\beta,t_i}_{p,\tilde{q},selfs}} \leq N.$$
%$$N=\max_{\substack{ Q\in \mathcal{P} \\ c_Q\neq 0}} \# \{ i \colon Q\cap supp \ g_i \neq \emptyset   \} < \infty.$$
\item[B.]  If $Q\in \mathcal{P}^k$ satisfies $c_Q\neq 0$ and $Q\cap supp \ g_i \neq \emptyset$ then $k\geq t_i$. 
\end{itemize}
Then we can find a $\mathcal{B}^s_{p,q}$-representation
\begin{equation} \label{rre} (\sum_i g_i)f= \sum_k \sum_{P\in \mathcal{P}^k} d_Q a_Q\end{equation} 
such that 
\begin{align}\label{reep33} &\Big(\sum_k \big( \sum_{P\in \mathcal{P}^k} |d_Q|^p \big)^{q/p} \Big)^{1/q} \nonumber \\
&\leq \Crr{gen} N \Big(\sum_k \big( \sum_{P\in \mathcal{P}^k} |c_Q|^p \big)^{q/p} \Big)^{1/q} 
\end{align} 
\end{proposition} 
\begin{proof} It is enough to prove the result for the case when  $\Lambda$ is finite. Let $Q\in \mathcal{P}^{k_0}$ with $c_Q\neq 0$.  There is   $\{i_1,
\dots, i_j\}\subset \Lambda$,  such that   
\begin{equation}\label{aa1} (\sum_i g_i ) a_Q=  \sum_{\ell\leq j} g_{i_\ell} a_Q.\end{equation} 
and $Q\cap supp \ g_{i_\ell} \neq \emptyset$ for every $\ell$. In particular $k_0\geq \max_\ell t_{i_\ell}.$
By Lemma \ref{incint3} for each $\ell\leq j$ we can find a $\mathcal{B}^\beta_{p,q} $-representation 
\begin{equation}\label{hjk} g_{i_\ell}a_Q= \sum_k \sum_{\substack{ P\in \mathcal{P}^k \\ P\subset Q}} \tilde{s}_{P,Q}^\ell b_{P}\end{equation}
such that $b_P$ is the canonical $(\beta,p)$-Souza's atom supported on $P$ and 
$$\Big( \sum_k \big(  \sum_{\substack{ P\in \mathcal{P}^k \\ P\subset Q}}  |\tilde{s}_{P,Q}^\ell|^p   \big)^{\tilde{q}/p}  \Big)^{1/\tilde{q}}\leq 2\Crr{incint}  |g_{i_\ell}|_{\mathcal{B}^{\beta,t_{i_\ell}}_{p,\tilde{q},selfs}} .$$
Since $b_P= |P|^{\beta -s}  a_P$, where $a_P$ is the canonical $(s,p)$-Souza's atoms supported on $P$, we can write
$$g_{i_\ell}a_Q= \sum_k \sum_{\substack{ P\in \mathcal{P}^k \\ P\subset Q}} s_{P,Q}^\ell a_{P},$$
with $s_{P,Q}^\ell=\tilde{s}_{P,Q}^\ell|P|^{\beta -s}$ satisfying 
\begin{align*}
 &\big( \sum_{\substack{ P\in \mathcal{P}^k \\ P\subset Q}}  |s_{P,Q}^\ell|^p \big)^{1/p} \\
&\leq  2\Crr{incint}  |g_{i_\ell}|_{\mathcal{B}^{\beta,t_{i_\ell}}_{p,\tilde{q},selfs}}  \Crr{maior}^{(\beta-s)(k-k_0)} \sup_{Q\in \mathcal{P}}|Q|^{\beta-s},
\end{align*} 
so we can write
$$(\sum_i g_i ) a_Q=  \sum_k \sum_{\substack{ P\in \mathcal{P}^k \\ P\subset Q}} s_{P,Q} a_{P},$$
with 
$$s_{P,Q}=\sum_\ell s_{P,Q}^\ell$$
 satisfying 
$$\big( \sum_{\substack{ P\in \mathcal{P}^k \\ P\subset Q}}  |s_{P,Q}|^p \big)^{1/p} \leq  2 N \Crr{incint} \Crr{maior}^{(\beta-s)(k-k_0)} \sup_{Q\in \mathcal{P}}|Q|^{\beta-s}.$$
By Proposition \ref{trans}.A we can find a $\mathcal{B}^s_{p,q}$-representation (\ref{rre}) satisfying (\ref{reep33}). 
\end{proof}

\begin{remark}\label{posrem} If $g$ is    $\mathcal{B}^\beta_{p,q}$-positive we can define
$$|g|_{\mathcal{B}^{s+,t}_{p,q,selfs}}=\sup_{\substack{ a_Q \in \mathcal{A}^{sz}_{s,p}\\ Q\in \mathcal{P}^j \\ j\geq t}} |ga_Q|_{\mathcal{B}^{s+}_{p,q}}.$$
 If we assume additionally that $g_i$ are $\mathcal{B}^{\beta,t_i}_{p,q}$-positive, Proposition \ref{sepa} remains true if we replace all the instances  of $|\cdot|_{\mathcal{B}^{s,t}_{p,q,selfs}}$ by $|\cdot|_{\mathcal{B}^{s+,t}_{p,q,selfs}}$  in its statement. Moreover  by Proposition \ref{trans}.B and Lemma \ref{incint3}. B we can conclude that
 \begin{itemize}
 \item[i.]  if $c_Q\geq 0$ for every $Q$ then $d_Q\geq 0$ for every $Q$,
 \item[ii.] If $Q$ is such that $d_Q\neq 0$ then $Q\subset supp \ g_i$, for some $i\in \Lambda$. 
\end{itemize}
\end{remark} 

\begin{corollary}\label{23er} For every $\beta > s$ and $\tilde{q}\in [1,\infty]$  we have  $\mathcal{B}^\beta_{p,\tilde{q},selfs} \subset M(\mathcal{B}^s_{p,q})$. Moreover this inclusion is continuous. 
\end{corollary}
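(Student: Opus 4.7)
The plan is a direct application of Proposition \ref{sepa} in its degenerate one-function case. Given $g \in \mathcal{B}^\beta_{p,\tilde{q},selfs}$ and an arbitrary $f \in \mathcal{B}^s_{p,q}$, fix $\epsilon > 0$ and pick a $\mathcal{B}^s_{p,q}$-representation
$$f = \sum_k \sum_{Q \in \mathcal{P}^k} c_Q a_Q$$
with cost at most $(1+\epsilon) |f|_{\mathcal{B}^s_{p,q}}$. The goal is to feed this representation into Proposition \ref{sepa} with index set $\Lambda = \{1\}$, single multiplier $g_1 = g$, and threshold $t_1 = 0$.

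With these choices, hypothesis B of Proposition \ref{sepa} is vacuous since $t_1 = 0$ imposes no constraint on the levels $k$ appearing in the representation of $f$. For hypothesis A, observe that for any $Q \in \mathcal{P}$ with $c_Q \neq 0$ the inner sum collapses to a single term, equal to $|g|_{\mathcal{B}^{\beta,0}_{p,\tilde{q},selfs}}$ when $Q \cap \operatorname{supp} g \neq \emptyset$ and to $0$ otherwise; using the remark (already noted in the paper) that $|\cdot|_{\mathcal{B}^{\beta,0}_{p,\tilde{q},selfs}}$ is equivalent to $|\cdot|_{\mathcal{B}^\beta_{p,\tilde{q},selfs}}$, we may take $N = |g|_{\mathcal{B}^\beta_{p,\tilde{q},selfs}}$ (up to an absorbed constant).

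The conclusion of Proposition \ref{sepa} then produces a $\mathcal{B}^s_{p,q}$-representation of $gf$ whose cost is bounded by $\Crr{gen}\, N \,(1+\epsilon)\, |f|_{\mathcal{B}^s_{p,q}}$. Taking the infimum over representations of $f$ and letting $\epsilon \to 0$ yields
$$|gf|_{\mathcal{B}^s_{p,q}} \leq \Crr{gen}\, |g|_{\mathcal{B}^\beta_{p,\tilde{q},selfs}}\, |f|_{\mathcal{B}^s_{p,q}},$$
which simultaneously shows that pointwise multiplication by $g$ is a bounded operator on $\mathcal{B}^s_{p,q}$ and that the inclusion $\mathcal{B}^\beta_{p,\tilde{q},selfs} \subset M(\mathcal{B}^s_{p,q})$ is continuous with $|g|_{M(\mathcal{B}^s_{p,q})} \leq \Crr{gen}\, |g|_{\mathcal{B}^\beta_{p,\tilde{q},selfs}}$.

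There is no real obstacle here: Proposition \ref{sepa} was crafted precisely to handle even the more delicate non-Archimedean situation with many multipliers, so the single-function corollary is essentially a transcription of the $|\Lambda|=1$ case of that statement. The only minor bookkeeping concerns checking that the $t=0$ version of the selfs-norm is (uniformly) equivalent to the plain one, which has already been observed.
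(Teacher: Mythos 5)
Your proposal is correct and coincides with the paper's intended argument: the corollary is stated as an immediate consequence of Proposition \ref{sepa}, obtained exactly by taking $\Lambda=\{1\}$, $g_1=g$, $t_1=0$ (which makes hypothesis B vacuous, reduces hypothesis A to $N=|g|_{\mathcal{B}^{\beta,0}_{p,\tilde q,selfs}}=|g|_{\mathcal{B}^{\beta}_{p,\tilde q,selfs}}$, and yields $|gf|_{\mathcal{B}^s_{p,q}}\leq \Crr{gen}|g|_{\mathcal{B}^{\beta}_{p,\tilde q,selfs}}|f|_{\mathcal{B}^s_{p,q}}$ after infimizing over representations of $f$). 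The only cosmetic remark is that for $t=0$ the restricted selfs-norm is not merely equivalent but equal to the plain one, so no extra constant is even needed there.
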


\subsection{Strongly regular domains }\label{srd} We may wonder on which conditions the characteristic function of a set  $\Omega$ is a pointwise multiplier in $\mathcal{B}^{s}_{p,q}$.  
\begin{definition} A measurable set $\Omega\subset I$ is  a {\bf $(\alpha,\Cll{rp2},\Cll{k1} )$-strongly regular domain}  if  for every $Q\in \mathcal{P}^j$, with $j\geq \Crr{k1}$,  there is  family $\mathcal{F}^k(Q\cap \Omega) \subset \mathcal{P}^k$  such that 
\begin{itemize}
\item[i.] We have $Q\cap \Omega = \cup_{k} \cup_{P\in \mathcal{F}^k(Q\cap \Omega)} P$.
\item[ii.] If $P,W \in \cup_{k} \mathcal{F}^k(Q\cap \Omega)$ and $P\neq W$ then $P\cap W=\emptyset$. 
\item[iii.] We have
\begin{equation} \sum_{P\in \mathcal{F}^k(Q\cap \Omega)} |P|^{\alpha}\leq \Crr{rp2}  |Q|^{\alpha}.\end{equation} 
\end{itemize}
\end{definition}

The following result can be associated  with results in Triebel \cite{ns} for $B^s_{p,p}(\mathbb{R}^n)$, especially \change{we replaced "specially" by "especially"} when we consider the setting of Besov spaces in compact homogenous spaces. See Section \ref{srd} for details.See also Schneider and Vyb\'\i ral \cite{corjan}.

\begin{proposition} \label{pos2} If $\Omega$ is a $(1-\beta p,\Crr{rp2},\Crr{k1} )$-strongly regular domain then 
\begin{equation}\label{fimm} |1_\Omega|_{\mathcal{B}^{\beta+,{\Crr{k1}}}_{p,\infty,selfs}}\leq   \Crr{rp2}^{1/p}.\end{equation}
\end{proposition}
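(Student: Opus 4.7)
By positive homogeneity and the fact that any $(\beta,p)$-Souza's atom on $Q$ is a (complex) constant of modulus at most $|Q|^{\beta-1/p}$ times $1_Q$, it suffices to produce, for each $Q\in\mathcal{P}^j$ with $j\geq\Crr{k1}$, a $\mathcal{B}^{\beta+}_{p,\infty}$-representation of $1_\Omega\cdot a_Q$ of cost at most $\Crr{rp2}^{1/p}$, where $a_Q=|Q|^{\beta-1/p}\,1_Q$ is the canonical $(\beta,p)$-Souza's atom on $Q$. The natural decomposition is handed to us by the hypothesis: the pairwise disjoint union $Q\cap\Omega=\bigsqcup_k\bigsqcup_{P\in\mathcal{F}^k(Q\cap\Omega)}P$ from the strongly regular domain property.

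Since $a_Q$ is constant and equal to $|Q|^{\beta-1/p}$ on every $P\in\mathcal{F}^k(Q\cap\Omega)\subset\mathcal{P}^k$, I can write
$$1_\Omega\, a_Q\;=\;\sum_k\sum_{P\in\mathcal{F}^k(Q\cap\Omega)} c_P\,\tilde a_P,\qquad c_P=|Q|^{\beta-1/p}|P|^{1/p-\beta}\geq 0,$$
where $\tilde a_P=|P|^{\beta-1/p}\,1_P$ is the canonical $(\beta,p)$-Souza's atom supported on $P$. Because the $P$'s are pairwise disjoint subsets of $Q$, absolute convergence in $L^p$ is direct (and can in any case be invoked from Proposition \ref{lp}); the coefficients $c_P$ are non-negative, so this is a legitimate $\mathcal{B}^{\beta+}_{p,\infty}$-representation.

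The cost computation is immediate from the defining inequality of a $(1-\beta p,\Crr{rp2},\Crr{k1})$-strongly regular domain with $\alpha=1-\beta p$:
\begin{align*}
\sup_k\Big(\sum_{P\in\mathcal{F}^k(Q\cap\Omega)}c_P^{\,p}\Big)^{1/p}
&=|Q|^{\beta-1/p}\sup_k\Big(\sum_{P\in\mathcal{F}^k(Q\cap\Omega)}|P|^{1-\beta p}\Big)^{1/p}\\
&\leq|Q|^{\beta-1/p}\bigl(\Crr{rp2}\,|Q|^{1-\beta p}\bigr)^{1/p}=\Crr{rp2}^{1/p}.
\end{align*}
Since this bound is uniform in $Q\in\mathcal{P}^j$, $j\geq\Crr{k1}$, taking the supremum over atoms gives \reff{fimm}.

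There is no genuine obstacle: the argument is essentially algebraic, and the only conceptual step is recognising that the very decomposition of $Q\cap\Omega$ furnished by the strongly regular domain definition \emph{is} an atomic representation of $1_\Omega\cdot a_Q$, with the exponent $\alpha=1-\beta p$ being precisely what is needed so that the dimensional factors $|Q|^{\beta-1/p}\cdot|Q|^{(1-\beta p)/p}$ collapse and leave only $\Crr{rp2}^{1/p}$.
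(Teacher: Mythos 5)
Your proof is correct and follows essentially the same route as the paper: you use the disjoint decomposition $Q\cap\Omega=\bigcup_k\bigcup_{P\in\mathcal{F}^k(Q\cap\Omega)}P$ to write $1_\Omega a_Q=\sum_k\sum_{P}(|P|/|Q|)^{1/p-\beta}a_P$ with canonical $(\beta,p)$-Souza's atoms and nonnegative coefficients, and then the level-$k$ cost bound $\big(\sum_{P\in\mathcal{F}^k(Q\cap\Omega)}(|P|/|Q|)^{1-\beta p}\big)^{1/p}\leq\Crr{rp2}^{1/p}$ is exactly the defining inequality with $\alpha=1-\beta p$. This is precisely the paper's argument, with the reduction to the canonical atom made explicit.
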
 
\begin{proof} Given $Q\in \mathcal{P}^j$, with $j\geq \Crr{k1}$  we can write
\begin{equation} \label{poi} 1_\Omega a_Q =    \sum_{k}   \sum_{P\in \mathcal{F}^k(Q\cap \Omega)} \Big(\frac{|P|}{|Q|} \Big)^{1/p-\beta}   a_P.\end{equation} 
where $a_P$ is a $(\beta,p)$-atom.  Note that 
$$  \big(   \sum_{P\in \mathcal{F}^k(Q\cap \Omega)} \Big(\frac{|P|}{|Q|} \Big)^{1-\beta p} \big)^{1/p} \leq \Crr{rp2}^{1/p},$$
so (\ref{fimm}) holds.
\end{proof}

\begin{proposition}[Pointwise Multipliers I] \label{pm1} There is $\Cll{gen2}$ with the following property. Suppose that  $\Omega_i$ are  $(1-\beta p,K_i,t_i)$-strongly regular domains, $i\in \Lambda \subset \mathbb{N}$, and $\Theta_i>  0$ for every $i\in \Lambda_i$.
Consider a function $f$ with  a $\mathcal{B}^s_{p,q}$-representation 
$$ f=\sum_k \sum_{Q\in \mathcal{P}^k} c_Q a_Q$$
satisfying  
\begin{itemize}
\item[A.] We have 
$$\sup_{\substack{ Q\in \mathcal{P} \\ c_Q\neq 0}}  \  \sum_{_{Q\cap \Omega_i \neq \emptyset}} \Theta_i K_i^{1/p} \leq N.$$
\item[B.]  If $Q\in \mathcal{P}^k$ satisfies $c_Q\neq 0$ and $Q\cap \Omega_i \neq \emptyset$ then $k\geq t_i$. 
\end{itemize}
  Then we can find a $\mathcal{B}^s_{p,q}$-representation
\begin{equation} \label{rre2} (\sum_i \Theta_i 1_{\Omega_i})f= \sum_k \sum_{P\in \mathcal{P}^k} d_Q a_Q\end{equation} 
such that 
\begin{align}\label{reep2} &\Big(\sum_k \big( \sum_{P\in \mathcal{P}^k} |d_Q|^p \big)^{q/p} \Big)^{1/q} \nonumber \\
&\leq \Crr{gen2} N\Big(\sum_k \big( \sum_{P\in \mathcal{P}^k} |c_Q|^p \big)^{q/p} \Big)^{1/q}.
\end{align} 
Moreover 
\begin{itemize}
\item[i.] If $Q$ satisfies  $d_Q\neq 0$ then $Q\subset \Omega_i$ for some $i \in \Lambda$. 
\item[ii.] If $c_Q\geq 0$ for every $Q$ then $d_Q\geq 0$ for every $Q$. 
\end{itemize}
\end{proposition}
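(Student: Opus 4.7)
The plan is to deduce this from Proposition~\ref{sepa} combined with Proposition~\ref{pos2}, specializing the general multiplier result to the case $g_i = \Theta_i 1_{\Omega_i}$, and using the positive variant from Remark~\ref{posrem} to obtain the extra claims (i) and (ii).

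First I would invoke Proposition~\ref{pos2} applied to each $\Omega_i$ (which is a $(1-\beta p, K_i, t_i)$-strongly regular domain): this yields
\[
|1_{\Omega_i}|_{\mathcal{B}^{\beta+,t_i}_{p,\infty,selfs}} \leq K_i^{1/p},
\]
and hence, since $\Theta_i > 0$,
\[
|\Theta_i 1_{\Omega_i}|_{\mathcal{B}^{\beta+,t_i}_{p,\infty,selfs}} \leq \Theta_i K_i^{1/p}.
\]
Moreover, by construction in the proof of Proposition~\ref{pos2}, each $1_{\Omega_i}$ is $\mathcal{B}^{\beta}_{p,\infty}$-positive on scale $\geq t_i$, so $\Theta_i 1_{\Omega_i}$ is itself $\mathcal{B}^{\beta,t_i}_{p,\infty}$-positive.

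Next I would set $g_i = \Theta_i 1_{\Omega_i}$ and check the hypotheses of Proposition~\ref{sepa} (in its positive form, via Remark~\ref{posrem}), with $\tilde{q}=\infty$. Observe that $\mathrm{supp}\ g_i \subset \Omega_i$, so $Q \cap \mathrm{supp}\ g_i \neq \emptyset$ implies $Q\cap \Omega_i \neq \emptyset$. Thus hypothesis A of Proposition~\ref{pm1} gives
\[
\sup_{\substack{Q\in\mathcal{P}\\ c_Q\neq 0}} \sum_{Q\cap \mathrm{supp}\ g_i \neq \emptyset} |g_i|_{\mathcal{B}^{\beta,t_i}_{p,\infty,selfs}}
\leq \sup_{\substack{Q\in\mathcal{P}\\ c_Q\neq 0}} \sum_{Q\cap \Omega_i \neq \emptyset} \Theta_i K_i^{1/p} \leq N,
\]
which is exactly hypothesis A of Proposition~\ref{sepa}. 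Hypothesis B of Proposition~\ref{pm1} is word-for-word hypothesis B of Proposition~\ref{sepa}.

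Applying Proposition~\ref{sepa} (with the positive modifications of Remark~\ref{posrem}) then produces a $\mathcal{B}^s_{p,q}$-representation
\[
\bigl(\sum_i \Theta_i 1_{\Omega_i}\bigr) f = \sum_k \sum_{P\in \mathcal{P}^k} d_P a_P
\]
with
\[
\Bigl(\sum_k \bigl(\sum_{P\in\mathcal{P}^k}|d_P|^p\bigr)^{q/p}\Bigr)^{1/q} \leq \Crr{gen} N \Bigl(\sum_k \bigl(\sum_{Q\in\mathcal{P}^k}|c_Q|^p\bigr)^{q/p}\Bigr)^{1/q},
\]
so one may take $\Crr{gen2}=\Crr{gen}$. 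The two concluding items are direct transcriptions from Remark~\ref{posrem}: claim (ii) (preservation of positivity) is the first bullet of the remark, and claim (i) ($d_Q\neq 0 \Rightarrow Q\subset \Omega_i$ for some $i$) is the second bullet, since $\mathrm{supp}\ g_i \subset \Omega_i$. I do not foresee a serious obstacle: the entire argument is a specialization, and the only care needed is to verify that the positive-cone hypotheses of Remark~\ref{posrem} are legitimately available here, which is automatic once one notes that the representation~\eqref{poi} constructed in Proposition~\ref{pos2} has nonnegative coefficients against canonical Souza's atoms.
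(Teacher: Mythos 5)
Your proposal is correct and follows exactly the route the paper takes: the paper's own proof of Proposition~\ref{pm1} is literally the one-line statement that it follows from Proposition~\ref{sepa}, Proposition~\ref{pos2} and Remark~\ref{posrem}, which is precisely the specialization $g_i=\Theta_i 1_{\Omega_i}$ you carry out. Your write-up merely makes explicit the verification of the hypotheses that the paper leaves to the reader.
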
 
\begin{proof} It follows from Proposition \ref{sepa}, Proposition \ref{pos2} and Remark \ref{posrem}. 
\end{proof}

%We state here a result similar to Proposition \ref{sepa}

\subsection{Functions on $\mathcal{B}^{1/p}_{p,\infty}\cap L^\infty$  } 

We want\change{"we replaced "We would like to give" by  "We want to give"} to give explicit examples of multipliers in $\mathcal{B}^s_{p,q}$. One should compare the following result with the study by Triebel\cite{multi}  of the regularity of  the multiplication on Besov spaces. See also Maz'ya  and  Shaposhnikova \cite{sob} for more information   on multipliers in classical Besov spaces.

\begin{proposition}[Pointwise multipliers II] \label{mult} Let $g\in  \mathcal{B}^{1/p}_{p,\infty}\cap L^\infty$. Then the multiplier operator
$$G\colon \mathcal{B}^s_{p,q} \rightarrow \mathcal{B}^s_{p,q}$$
defined by $G(f)=gf$ is a well-defined \change{we replaced "well defined" by "well-defined"} and bounded operator acting on $(\mathcal{B}^s_{p,q},|~\cdot~|_{\mathcal{B}^s_{p,q}})$.
Indeed
$$|G|_{\mathcal{B}^s_{p,q}}\leq \Crr{e}\Crr{no}  \frac{|g|_{\mathcal{B}^{1/p}_{p,\infty}} }{1-\Crr{maior}^{1/p-s}} +|g|_\infty,$$
where $\Crr{e}=\Crr{e}(1/p,p,\infty)$ and $\Crr{no}=\Crr{no}(1/p,p,\infty)$ are as in Corollary \ref{fou}.
\end{proposition}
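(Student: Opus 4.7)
The plan is to construct an explicit $\mathcal{B}^s_{p,q}$-representation of $gf$ by splitting, at every scale $P$, the multiplier $g$ into its cell-average on $P$ (which will contribute the $|g|_\infty$ term) plus a finer-scale oscillation (which will be controlled via the standard representation of $g$), and then to bound the cost by Minkowski and the convolution trick (Proposition \ref{young}).

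First, I apply Corollary \ref{fou} to $g\in\mathcal{B}^{1/p}_{p,\infty}$: since the canonical $(1/p,p)$-Souza atom on $Q$ is simply $1_Q$, this yields a standard representation
\[
g=\sum_j\sum_{Q\in\mathcal{P}^j}k_Q^g\,1_Q,\qquad M:=\sup_j\Bigl(\sum_{Q\in\mathcal{P}^j}|k_Q^g|^p\Bigr)^{1/p}\lesssim \Crr{e}\Crr{no}\,|g|_{\mathcal{B}^{1/p}_{p,\infty}},
\]
and by Proposition \ref{boup}.B the tail $\alpha_P:=\sum_{J\supset P}k_J^g$ equals the average $\tfrac{1}{|P|}\int_P g\,dm$, so $|\alpha_P|\le|g|_\infty$. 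Given $\epsilon>0$, I pick a $\mathcal{B}^s_{p,q}$-representation $f=\sum_k\sum_{P\in\mathcal{P}^k}c_P a_P$ of cost at most $(1+\epsilon)|f|_{\mathcal{B}^s_{p,q}}$, with $a_P=v_P 1_P$ and $|v_P|\le |P|^{s-1/p}$. Since $a_P$ is constant on $P$ and $g\,1_P=\alpha_P 1_P+\sum_{J\subsetneq P}k_J^g 1_J$, summing against $c_P$ and regrouping by the smaller cube $J$ gives
\[
gf=\underbrace{\sum_{k}\sum_{P\in\mathcal{P}^k}\alpha_P c_P\,a_P}_{\text{(I)}}\;+\;\underbrace{\sum_{j}\sum_{J\in\mathcal{P}^j}k_J^g\gamma_J\,1_J}_{\text{(II)}},\qquad \gamma_J:=\sum_{P\supsetneq J}c_Pv_P.
\]
Block (I) is already a $\mathcal{B}^s_{p,q}$-representation of cost at most $|g|_\infty(1+\epsilon)|f|_{\mathcal{B}^s_{p,q}}$, which will produce the $|g|_\infty$ summand in the final estimate.

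For block (II), I rewrite $1_J=|J|^{1/p-s}a_J$ with $a_J$ the canonical $(s,p)$-Souza atom on $J$. Using $|v_{P_k(J)}|\le |P_k(J)|^{s-1/p}$ and iterating ${\Crr{g6}}$ to obtain $|J|/|P_k(J)|\le \Crr{maior}^{j-k}$ (here $P_k(J)\in\mathcal{P}^k$ is the unique ancestor of $J\in\mathcal{P}^j$, which exists by ${\Crr{g5}}$),
\[
|\gamma_J|\,|J|^{1/p-s}\le\sum_{k<j}|c_{P_k(J)}|\,\Crr{maior}^{(j-k)(1/p-s)}.
\]
Minkowski in $\ell^p(\mathcal{P}^j)$ followed by grouping the $J$'s by their ancestor $P\in\mathcal{P}^k$ (using $\sum_{J\subset P,\,J\in\mathcal{P}^j}|k_J^g|^p\le M^p$) bounds the level-$j$ cost of (II) by
\[
\Bigl(\sum_{J\in\mathcal{P}^j}|k_J^g\gamma_J|^p|J|^{1-sp}\Bigr)^{1/p}\le M\sum_{k<j}\Crr{maior}^{(j-k)(1/p-s)}\Bigl(\sum_{P\in\mathcal{P}^k}|c_P|^p\Bigr)^{1/p}.
\]
This is a convolution in $j$, so Proposition \ref{young} bounds its $\ell^q$-norm in $j$ by $M(1-\Crr{maior}^{1/p-s})^{-1}(1+\epsilon)|f|_{\mathcal{B}^s_{p,q}}$. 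Adding the two blocks and letting $\epsilon\to 0$ yields the claimed operator-norm estimate. The main technical point is preserving the convolution structure after Minkowski so that Proposition \ref{young} applies cleanly: the ancestor reduction $J\mapsto P_k(J)$, together with the uniform bound $M$ on the vertical $\ell^p$-sums of $k_J^g$ (a direct consequence of $\tilde{q}=\infty$ in the $\mathcal{B}^{1/p}_{p,\infty}$ norm), does precisely that.
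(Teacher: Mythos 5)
Your proposal is correct and is essentially the paper's own proof: your blocks (I) and (II) are exactly the paper's $u_2$ and $u_1$ (writing $g\,1_P=\alpha_P 1_P+\sum_{J\subsetneq P}k_J^g 1_J$ and regrouping is the same splitting by which of the two cubes is larger), and both the $|g|_\infty$ bound for (I) via Proposition \ref{boup}.B and the convolution estimate for (II) via Proposition \ref{young} match the paper's argument. The one step you gloss over is the justification that $gf=(\mathrm{I})+(\mathrm{II})$ holds as an identity in $L^p$ (the regrouping of two infinite series), which the paper verifies by truncating both representations at level $k_0$, checking $f_{k_0}g_{k_0}\to fg$ in $L^1$, and invoking Corollary \ref{compa1}.i.
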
 
\begin{remark} \label{pos3} We can get a similar result replacing $\mathcal{B}^{a}_{p, b}$ by $\mathcal{B}^{a+}_{p, b}$ everywhere. 
\end{remark} 
\begin{proof} Let  $a_Q=|Q|^{s-1/p}1_Q$ be  the canonical $(s,p)$-Souza's atom on $Q$ and $b_J=1_J$ be  the canonical $(1/p,p)$-Souza's atom on $J$. Given $\epsilon > 0$, let 
$$f = \sum_k \sum_{Q\in \mathcal{P}^k} c_Q a_Q$$
be  a $\mathcal{B}^s_{p,q}$-representation of $f$ such that 
$$ \Big(  \sum_k \big(\sum_{Q\in \mathcal{P}^k} |c_Q|^p\big)^{q/p} \Big)^{1/q} \leq (1+\epsilon)|f|_{\mathcal{B}^s_{p,q}}$$
and
$$g =\sum_k  \sum_{J\in \mathcal{P}^k} e_J b_J$$
be  a $\mathcal{B}^{1/p}_{p,\infty}$-representation of $g$ given by Corollary \ref{fou} (in the case of Remark \ref{pos3} we can consider an optimal $\mathcal{B}^{1/p}_{p,\infty}$-positive representation of $g$). We claim that 
$$u_1 =\sum_j  \sum_{J \in \mathcal{P}^j} \big(\sum_{J\subset Q, Q\neq J,Q\in \mathcal{P}} \big( \frac{|J|}{|Q|}\big)^{1/p-s}  c_Q e_J  \big)  a_J, \ and $$
$$u_2 = \sum_k \sum_{Q \in \mathcal{P}^k} \big( \sum_{Q\subset J,  J\in \mathcal{P} } c_Q e_J \big)  a_Q,$$
are $\mathcal{B}^s_{p,q}$-representations of functions $u_i \in \mathcal{B}^s_{p,q}$. Firstly note that the inner sums are finite. Moreover  if $J\in \mathcal{P}^j$ we denote by 
$Q_k(J)$ the unique element  of $\mathcal{P}^k$, with $k\leq j$ that satisfies  $J \subset Q_k(J)$ then 
\begin{align}& \ \big( \sum_{J \in \mathcal{P}^j } \big|\sum_{k \leq j}  \Big( \frac{|J|}{|Q_k(J)|}\Big)^{1/p-s}  c_{Q_k(J)} e_J\big|^p \big)^{1/p} \nonumber  \\
&\leq  \sum_{k \leq j} \Crr{maior}^{(j-k)(1/p-s)}  \big( \sum_{J \in \mathcal{P}^j }| c_{Q_k(J)} e_J|^p \big)^{1/p} \nonumber  \\
&\leq \big( \sum_{J \in \mathcal{P}^j }|e_J|^p \big)^{1/p}  \sum_{k \leq j} \Crr{maior}^{(j-k)(1/p-s)}  \max_{Q\in \mathcal{P}^k}  |c_Q|\nonumber  \\
&\leq \Big( \max_j \big( \sum_{J \in \mathcal{P}^j }|e_J|^p \big)^{1/p} \Big) \sum_{k \leq j} \Crr{maior}^{(j-k)(1/p-s)}  (\sum_{Q\in \mathcal{P}^k}  |c_Q|^p)^{1/p} \nonumber  \\
&\leq \Crr{e}\Crr{no}  |g|_{\mathcal{B}^{1/p}_{p,\infty}} \sum_{k \leq j} \Crr{maior}^{(j-k)(1/p-s)}  (\sum_{Q\in \mathcal{P}^k}  |c_Q|^p)^{1/p} \nonumber  
\end{align} 
The right hand side is a convolution, so we can easily get 
$$ |u_1|_{\mathcal{B}^s_{p,q}} \leq (1+\epsilon) \Crr{e}\Crr{no} \frac{|g|_{\mathcal{B}^{1/p}_{p,\infty}} }{1-\Crr{maior}^{1/p-s}}     |f|_{\mathcal{B}^s_{p,q}}.$$
Moreover by Proposition \ref{boup}.B, with $s=1/p$, we obtain
\begin{align}& \Big( \sum_{Q \in \mathcal{P}^k} \big| \sum_{J\in \mathcal{P}, Q\subset J} c_Q e_J \big|^p  \Big)^{1/p} \nonumber  \\
&\leq  \Big( \sum_{Q \in \mathcal{P}^k} |c_Q|^p \big| \sum_{J\in \mathcal{P}, Q\subset J} e_J \big|^p  \Big)^{1/p} \nonumber  \\
&\leq  \Big(  \sum_{Q \in \mathcal{P}^k} |c_Q|^p \Big)^{1/p} |g|_{\infty}. \nonumber  
\end{align} 
So
$$ |u_1|_{\mathcal{B}^s_{p,q}} \leq  (1+\epsilon) |f|_{\mathcal{B}^s_{p,q}} |g|_\infty .$$
We claim  that $gf= u_1+u_2$. Indeed  let
$$f_{k_0} = \sum_{k<k_0} \sum_{Q\in \mathcal{P}^k} c_Q a_Q$$
and
$$g_{k_0} = \sum_{k<k_0} \sum_{J\in \mathcal{P}^k} e_J b_J$$
By Proposition \ref{lp} we have 
$$\lim_{k_0} |g_{k_0}-g|_{p'}=0$$
and
 $$\lim_{k_0} |f_{k_0}-f|_{p}=0.$$
So
$$\lim_{k_0}|f_{k_0}g_{k_0}-fg|_1=0.$$
Note that 
\begin{itemize}
\item[A.] If $Q\subset J$ then  $a_Q b_J= a_Q$,
\item[B.] If $J \subset Q$ then 
$$a_Q b_J =  \big( \frac{|J|}{|Q|}\big)^{1/p-s} a_J.$$
\end{itemize}
So
\begin{eqnarray*}
f_{k_0}g_{k_0}&=& \sum_{k<k_0} \sum_{Q\in \mathcal{P}^k}  \sum_{i<k_0} \sum_{J\in \mathcal{P}^i} e_J c_Q a_Qb_J\\
&=&  \sum_{k<k_0} \sum_{Q\in \mathcal{P}^k}  \big( \sum_{\substack{J\in \mathcal{P}\\ Q\subset J}} e_Jc_Q\big)a_Q +  \sum_{k<k_0} \sum_{Q\in \mathcal{P}^k}  \sum_{i<k_0} \sum_{\substack{J\in \mathcal{P}^i\\J\subset Q\\J\neq Q}}  e_Jc_Q \big( \frac{|J|}{|Q|}\big)^{1/p-s} a_J \\
&=&  \sum_{k<k_0} \sum_{Q\in \mathcal{P}^k}  \big( \sum_{\substack{J\in \mathcal{P}\\ Q\subset J}} e_Jc_Q\big)a_Q +   \sum_{i<k_0} \sum_{J \in \mathcal{P}^i}  \Big(  \sum_{\substack{Q\in \mathcal{P}\\J\subset Q\\J\neq Q}}  e_Jc_Q \big( \frac{|J|}{|Q|}\big)^{1/p-s}\Big) a_J\\
&=&u_{1,k_0}+u_{2,k_0}.
\end{eqnarray*}
Note that  
$$\lim_{k_0} |u_{r,k_0}-u_r|_1=0 \ and \   |u_{r,k_0}|_{B^s_{p,q}}\leq |u_{r}|_{B^s_{p,q}} \ for \ r=1,2.$$
Now we can use Corollary \ref{compa1}.i to conclude the proof.
\end{proof}

\section{ $\mathcal{B}^s_{p,q}\cap L^\infty$ is a quasi-algebra}

Multipliers in $\mathcal{B}^s_{p,q}\cap L^\infty$ are indeed much easier to come by.

\begin{proposition}[Pointwise multipliers III] \label{mult33} Let $g,f  \in \mathcal{B}^{s}_{p,q}\cap L^\infty$. Then $g\cdot f  \in \mathcal{B}^{s}_{p,q}\cap L^\infty$ and 
$$|f\cdot g|_{\mathcal{B}^s_{p,q}}+|f\cdot g|_{\infty}\leq\Crr{e}\Crr{no} (|f|_{\mathcal{B}^{s}_{p,q}}  +|f|_\infty)( |g|_{\mathcal{B}^{s}_{p,q}} +|g|_\infty).$$
So $\mathcal{B}^{s}_{p,q}\cap L^\infty$ is a quasi-Banach \change{we replaced "quasi Banach" by "quasi-Banach"}  algebra. Here  $\Crr{e}=\Crr{e}(s,p,q)$ and $\Crr{no}=\Crr{no}(s,p,q)$ are as in Corollary \ref{fou}.
\end{proposition}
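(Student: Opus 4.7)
The plan is to adapt the proof of Proposition \ref{mult} by using standard atomic representations for \emph{both} $f$ and $g$ at the same scale $s$, and to exploit the rigid multiplicative structure of canonical Souza's atoms on a nested grid.

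First I would use Corollary \ref{fou} to fix the standard atomic representations $f=\sum_{k}\sum_{Q\in\mathcal{P}^{k}} c_{Q}a_{Q}$ and $g=\sum_{k}\sum_{J\in\mathcal{P}^{k}} e_{J}a_{J}$, where $a_{Q}=|Q|^{s-1/p}1_{Q}$ is the canonical $(s,p)$-Souza's atom. The cost of each of these representations is controlled by $\Crr{e}\Crr{no}$ times the corresponding $\mathcal{B}^{s}_{p,q}$-norm. Next, I compute products: in a grid, any two $Q,J\in\mathcal{P}$ are either disjoint or comparable, and explicitly $a_{Q}a_{J}=|J|^{s-1/p}a_{Q}$ when $Q\subseteq J$, and $|Q|^{s-1/p}a_{J}$ when $J\subsetneq Q$. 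This lets me split
\begin{align*}
fg &= u_{1}+u_{2},\\
u_{1} &= \sum_{j}\sum_{Q\in\mathcal{P}^{j}} c_{Q}\Bigl(\sum_{J\supsetneq Q} e_{J}|J|^{s-1/p}\Bigr)a_{Q},\\
u_{2} &= \sum_{j}\sum_{J\in\mathcal{P}^{j}} e_{J}\Bigl(\sum_{Q\supseteq J} c_{Q}|Q|^{s-1/p}\Bigr)a_{J},
\end{align*}
where the diagonal $Q=J$ has been absorbed into $u_{2}$.

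Second, I would bound the inner parentheses using Dirac's Approximations (Proposition \ref{boup}.B). Directly, $\bigl|\sum_{Q\supseteq J} c_{Q}|Q|^{s-1/p}\bigr|=\bigl|\int f\cdot 1_{J}/|J|\,dm\bigr|\leq |f|_{\infty}$. For the strict sum in $u_{1}$, apply Proposition \ref{boup}.B at $Q$ and at its parent and subtract: this expresses $e_{Q}|Q|^{s-1/p}$ as a difference of two such integrals, hence $|e_{Q}||Q|^{s-1/p}\leq 2|g|_{\infty}$, and therefore $\bigl|\sum_{J\supsetneq Q} e_{J}|J|^{s-1/p}\bigr|\leq 3|g|_{\infty}$. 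Since these parenthetical factors are uniformly bounded by $|g|_{\infty}$ (up to constants) and $|f|_{\infty}$ respectively, the displayed series are valid $\mathcal{B}^{s}_{p,q}$-representations of $u_{1}$ and $u_{2}$ by canonical Souza's atoms, and their costs satisfy
$$|u_{1}|_{\mathcal{B}^{s}_{p,q}}\lesssim |g|_{\infty}\,\Crr{e}\Crr{no}|f|_{\mathcal{B}^{s}_{p,q}},\qquad |u_{2}|_{\mathcal{B}^{s}_{p,q}}\lesssim |f|_{\infty}\,\Crr{e}\Crr{no}|g|_{\mathcal{B}^{s}_{p,q}}.$$
Combined with the trivial bound $|fg|_{\infty}\leq |f|_{\infty}|g|_{\infty}$, the mixed-term identity $(|f|_{\mathcal{B}^{s}_{p,q}}+|f|_{\infty})(|g|_{\mathcal{B}^{s}_{p,q}}+|g|_{\infty})$ majorizes everything and yields the claimed inequality (modulo tuning the constant to $\Crr{e}\Crr{no}$).

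Third, I would justify that this formal computation really produces $fg$ and the bound passes to the limit. Approximating by the truncations $f_{k_{0}}=\sum_{k\leq k_{0}}\sum_{Q} c_{Q}a_{Q}$ and $g_{k_{0}}$ analogously, Proposition \ref{lp} gives $L^{\beta}$-convergence of $f_{k_{0}}$ and $g_{k_{0}}$ for some $\beta>1$; coupled with the uniform $L^{\infty}$ bounds for $f$ and $g$ this yields $f_{k_{0}}g_{k_{0}}\to fg$ in $L^{1}$. On the other hand, $f_{k_{0}}g_{k_{0}}$ is exactly the corresponding truncation of $u_{1}+u_{2}$, so Corollary \ref{compa1}.i applies to pass the uniform $\mathcal{B}^{s}_{p,q}$-bound to the limit.

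The main technical obstacle is the telescoping trick needed to bound the diagonal contribution $|e_{Q}||Q|^{s-1/p}\leq 2|g|_{\infty}$; every other step is either a direct consequence of Corollary \ref{fou} and Proposition \ref{boup}.B, or the convergence bookkeeping already carried out in the proof of Proposition \ref{mult}.
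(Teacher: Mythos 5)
Your proof is correct and follows essentially the same route as the paper's: standard representations of both $f$ and $g$ from Corollary \ref{fou}, the splitting $fg=u_1+u_2$ according to which of $Q$, $J$ contains the other, the $L^\infty$ bounds on the inner coefficient sums via Proposition \ref{boup}, and the truncation/limit argument via Proposition \ref{lp} and Corollary \ref{compa1}.i. The only cosmetic difference is your telescoping bound $3|g|_\infty$ for the strict sum $\sum_{J\supsetneq Q}e_J|J|^{s-1/p}$; since that sum equals $\int g\,\frac{1_P}{|P|}\,dm$ for the parent $P$ of $Q$, Proposition \ref{boup}.B bounds it by $|g|_\infty$ directly, which recovers the stated constant without any tuning.
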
 
\begin{proof} Of course $|f\cdot g|_\infty\leq |f|_\infty|g|_\infty$. Let  $a_Q=|Q|^{s-1/p}1_Q$ be  the canonical $(s,p)$-Souza's atom on $Q$. Let 
$$f = \sum_k \sum_{Q\in \mathcal{P}^k} c_Q a_Q$$
and
$$g =\sum_k  \sum_{J\in \mathcal{P}^k} e_J a_J$$
be  $\mathcal{B}^s_{p,q}$-representations of $f$ and $g$  given by Corollary \ref{fou}. We claim that 
$$u_1 =\sum_k  \sum_{Q \in \mathcal{P}^k} \big( \sum_{Q\subset J,  J\in \mathcal{P} } |J|^{s-1/p} c_Q e_J \big)  a_Q,$$
$$u_2 =\sum_k  \sum_{J \in \mathcal{P}^k} \big(\sum_{J\subset Q, Q\neq J,Q\in \mathcal{P}} |Q|^{s-1/p}  c_Q e_J  \big)  a_J.$$
are $\mathcal{B}^s_{p,q}$-representations of functions $u_i \in \mathcal{B}^s_{p,q}$. Moreover by Proposition \ref{boup}.A  we have 
\begin{align}& \Big( \sum_{Q \in \mathcal{P}^k} \big| \sum_{J\in \mathcal{P}, Q\subset J}  |J|^{s-1/p}  c_Q e_J \big|^p  \Big)^{1/p} \nonumber  \\
&\leq  \Big( \sum_{Q \in \mathcal{P}^k} |c_Q|^p \big| \sum_{J\in \mathcal{P}, Q\subset J}  |J|^{s-1/p}  e_J \big|^p  \Big)^{1/p} \nonumber  \\
&\leq  \Big(  \sum_{Q \in \mathcal{P}^k} |c_Q|^p \Big)^{1/p} |g|_{\infty}. \nonumber  
\end{align} 
So 
$$ |u_1|_{\mathcal{B}^s_{p,q}} \leq \Crr{e}\Crr{no}  |g|_\infty |f|_{\mathcal{B}^s_{p,q}},$$
and by an analogous argument
$$ |u_2|_{\mathcal{B}^s_{p,q}} \leq \Crr{e}\Crr{no} |f|_\infty |g|_{\mathcal{B}^s_{p,q}}.$$
Define $f_{k_0}$ and $g_{k_0}$ as in the proof of Proposition \ref{mult}. By Proposition \ref{boup} we have  $|f_{k_0}|\leq |f|_\infty$ and $|g_{k_0}|\leq |g|_\infty$. Since $\lim_{k_0} f_{k_0}=f$ and $\lim_{k_0} g_{k_0}=g$ in $L^p$, we can assume, taking a subsequence if necessary, that $f_{k_0}g_{k_0}$ converges pointwise to $fg$. So by the Theorem of Dominated Convergence we have $\lim_{k_0} f_{k_0}g_{k_0}= fg$ in $L^1$. 
Finally note that if $J \subset Q$ then 
$$a_Q a_J =  |Q|^{s-1/p} a_J,$$
Now we can use the same argument as in the proof  of Proposition \ref{mult}  to conclude that $gf= u_1+u_2$.  This concludes the proof. 
\end{proof}

\subsection{Regular domains} \label{cf} Here we will give sufficient conditions for the characteristic function of a set to define a bounded pointwise multiplier either on $\mathcal{B}^{s}_{p,q}\cap L^\infty$. For every set  $\Omega$, let 
$$k_0(\Omega)=\min \{k\geq 0 \colon \ \exists P \in \mathcal{P}^k \ s.t. \ P \subset \Omega     \} $$

\begin{definition} We say that a countable family of pairwise disjoint measurable sets $\{\Omega_r\}_{r\in \Lambda}$  is  {\bf  $(\alpha, \Cll{domain},\Cll[c]{domainc})$-regular family} if  one can  find  families $\mathcal{F}^k(\Omega_r)\subset \mathcal{P}^k$, $k\geq  k_0(\Omega_r)$,  such that 
\begin{itemize}
\item[A.] We have $\Omega_r = \cup_{k\geq k_0(\Omega_r)} \cup_{Q\in \mathcal{F}^k(\Omega_r)} Q$.
\item[B.] If $P,Q \in \cup_{k\geq k_0(\Omega_r)} \mathcal{F}^k(\Omega_r)$ and $P\neq Q$ then $P\cap Q=\emptyset$. 
\item[C.] We have
\begin{equation}\label{dom} \sum_{r\in \Lambda} \sum_{Q\in \mathcal{F}^k(\Omega_r)} |Q|^{\alpha}\leq \Crr{domain} \Crr{domainc}^{k-k_0(\cup_r \Omega_r)} |\cup_r \Omega_r|^{\alpha}.\end{equation} 
\end{itemize}
We say that a measurable set $\Omega$ is a $(\alpha, \Crr{domain},\Crr{domainc})$-regular domain if $\{\Omega\}$  is a $(\alpha, \Crr{domain},\Crr{domainc})$-regular family.
\end{definition}

\begin{proposition} Let $\beta > s$. Every $(1-\beta p, C,0)$-strongly regular domain is a  $(1-sp,C',\Crr{maior}^{(\beta-s)p})$-regular domain,
for some $C'$. 
\end{proposition}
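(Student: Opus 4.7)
Plan. The strategy will be to use the same family $\mathcal{F}^{k}(\Omega)$ that witnesses strong regularity (applied at $Q=I$, which is legitimate since $\Crr{k1}=0$) as the candidate witness for the regular-domain condition. Conditions (i) and (ii) of the definition of a regular family transfer immediately, so the whole content is the quantitative bound
\begin{equation*}
\sum_{Q\in \mathcal{F}^k(\Omega)} |Q|^{1-sp}\le C' \Crr{maior}^{(\beta-s)p(k-k_0(\Omega))} |\Omega|^{1-sp}.
\end{equation*}

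The key identity will be the algebraic splitting $|P|^{1-sp}=|P|^{1-\beta p}\cdot |P|^{(\beta-s)p}$. For the first factor I would invoke strong regularity; for the second I would use the grid estimate $|P|\le \Crr{maior}^{k-k_0(\Omega)}|Q|$ from iterating ${\Crr{g6}}$, valid whenever $P\in\mathcal{P}^{k}$ is contained in an ancestor $Q\in\mathcal{P}^{k_0(\Omega)}$. After grouping the elements of $\mathcal{F}^{k}(\Omega)$ by their level-$k_0(\Omega)$ ancestor and pulling out the factor $\Crr{maior}^{(k-k_0(\Omega))(\beta-s)p}$, I would apply the strong regularity of $\Omega$ again at each such ancestor $Q$ (legitimate because $k_0(\Omega)\ge 0=\Crr{k1}$) to produce the desired decay factor times a sum of the form $\sum_{Q\in S}|Q|^{1-sp}$, where $S$ collects those level-$k_0(\Omega)$ ancestors. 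To guarantee that the restriction of $\mathcal{F}^{k}(\Omega)$ to each such $Q$ still forms a valid strong-regularity decomposition of $Q\cap \Omega$, I would first pass without loss of generality to the maximal standard decomposition (the one consisting of maximal cubes of $\mathcal{P}$ contained in $\Omega$), which only makes $\sum|P|^{1-\beta p}$ smaller by superadditivity of $x\mapsto x^{1-\beta p}$ for exponents at most one, and by a symmetric argument also works for exponents greater than one.

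The last step is to bound $\sum_{Q\in S}|Q|^{1-sp}$ by $|\Omega|^{1-sp}$, with a constant depending only on $\Crr{rp2}$, $\Crr{menor}$, $\Crr{maior}$, and the exponents $s,\beta,p$. The main obstacle will be precisely this: strong regularity at $Q=I$ and level $k=k_0(\Omega)$ bounds $\sum_{Q\in S}|Q|^{1-\beta p}$ in terms of $|I|^{1-\beta p}$, so a H\"older-type interpolation (using exponents $\beta/s$ and $\beta/(\beta-s)$) against $\sum_{Q\in S}|Q\cap \Omega|\le |\Omega|$ must be set up carefully, and the remaining factors of $|I|$ have to be absorbed using the grid comparisons $|\Omega|\ge \Crr{menor}^{k_0(\Omega)}|I|$ (coming from the definition of $k_0(\Omega)$) and $|Q|\le \Crr{maior}^{k_0(\Omega)}|I|$ (from ${\Crr{g6}}$). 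Getting these scale dependences on $k_0(\Omega)$ to cancel cleanly so that $C'$ depends only on the structural constants, and not on the particular $\Omega$, is the technical heart of the argument.
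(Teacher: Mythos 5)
Your algebraic skeleton---splitting $|P|^{1-sp}=|P|^{1-\beta p}\,|P|^{(\beta-s)p}$, extracting the factor $\Crr{maior}^{(k-k_0(\Omega))(\beta-s)p}$ from the second piece via ${\Crr{g6}}$, and feeding the first piece to strong regularity at the level-$k_0(\Omega)$ ancestors---is exactly the paper's computation. But your choice of witness creates a gap the paper never has to face. You start from the single family $\mathcal{F}^k(I\cap\Omega)$ furnished by strong regularity at $Q=I$, and then need to apply strong regularity again inside each ancestor $Q\in\mathcal{P}^{k_0(\Omega)}$; this forces you to argue that the \emph{restriction} of the $I$-level decomposition to $Q$ is itself an admissible decomposition of $Q\cap\Omega$, which is where your ``pass to the maximal decomposition'' device comes in. That device does not work: the strong-regularity bound $\sum_{P\in\mathcal{F}^k}|P|^{1-\beta p}\le C|Q|^{1-\beta p}$ is imposed \emph{level by level}, and replacing a decomposition by the maximal one moves cubes across levels, so sub/superadditivity of $x\mapsto x^{1-\beta p}$ (whose direction in any case flips with the sign of $1-\beta p$, and the ``symmetric argument'' then yields the reverse inequality) gives you at best a bound on the total sum over all $k$, not the per-level bounds the definition requires. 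The paper sidesteps all of this: since $\Crr{k1}=0$, it simply \emph{defines} the witnessing family as $\mathcal{F}^k(\Omega)=\bigcup_{Q\in\mathcal{P}^{k_0(\Omega)}}\mathcal{F}^k(Q\cap\Omega)$, using the decompositions that strong regularity already provides at each ancestor, so no compatibility between decompositions at different scales is ever needed.

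The endgame is also off target. Your set $S$ of level-$k_0(\Omega)$ ancestors consists of cubes that merely \emph{intersect} $\Omega$, whereas strong regularity at $I$ only controls sums over cubes \emph{contained} in $\Omega$; so the H\"older interpolation you propose starts from a bound you do not have. More importantly, the uniformity of $C'$ in $\Omega$ that you single out as ``the technical heart'' is neither claimed nor proved: the paper closes with the crude estimates $\#\mathcal{P}^{k_0(\Omega)}\le\Crr{menor}^{-k_0(\Omega)}$ and $|Q|\le(\Crr{maior}/\Crr{menor})^{k_0(\Omega)}|\Omega|$, giving $\sum_{Q\in\mathcal{P}^{k_0(\Omega)}}|Q|^{1-sp}\le\Crr{menor}^{-k_0(\Omega)}(\Crr{maior}/\Crr{menor})^{k_0(\Omega)(1-sp)}|\Omega|^{1-sp}$, and accepts a constant $C'$ that depends on $k_0(\Omega)$; the remark following the proposition is precisely about the additional hypotheses under which $C'$ can be taken structural.
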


\begin{proof} Consider a  $(1-\beta p,\Crr{rp2},0)$-strongly regular domain $\Omega$. There are at most $\Crr{menor}^{-k_0(\Omega)}$ elements in $\mathcal{P}^{k_0(\Omega)}$ and
$$\Big(\frac{\Crr{maior}}{\Crr{menor}}\Big)^{-k_0(\Omega)}  \leq \frac{|Q|}{|W|}\leq \Big(\frac{\Crr{maior}}{\Crr{menor}}\Big)^{k_0(\Omega)}$$
for every $Q,W\in \mathcal{P}^{k_0(\Omega)}$. Consequently
$$\Big(\frac{\Crr{maior}}{\Crr{menor}}\Big)^{-k_0(\Omega)}  \leq \frac{|\Omega|}{|Q|}\leq \Crr{menor}^{-k_0(\Omega)} \Big(\frac{\Crr{maior}}{\Crr{menor}}\Big)^{k_0(\Omega)}$$
for each $Q \in \mathcal{P}^{k_0(\Omega)}$.
For every $Q\in \mathcal{P}^{k_0(\Omega)}$  there is a family $\mathcal{F}^k(Q\cap \Omega)$ such that 
$$\sum_k \sum_{P\in \mathcal{F}^k(Q\cap \Omega)} P= Q\cap \Omega$$
and
$$ \sum_{P\in \mathcal{F}^k(Q\cap \Omega)}|P|^{1-\beta p}\leq C  |Q|^{1-\beta p}.$$
Let 
$$\mathcal{F}^k(\Omega)=\cup_{Q\in \mathcal{P}^{k_0(\Omega)} } \mathcal{F}^k(Q\cap \Omega).$$
We have
\begin{align*}  \sum_{Q\in  \mathcal{P}^{k_0(\Omega)}}\sum_{P\in \mathcal{F}^k(Q\cap \Omega)}|P|^{1-s p}&=\sum_{P\in \mathcal{F}^k(Q\cap \Omega)}|P|^{1-\beta p} |P|^{(\beta-s)p}\\
&\leq  \sum_{Q\in  \mathcal{P}^{k_0(\Omega)}} \big(\max_{P\in \mathcal{F}^k(Q\cap \Omega)}|P|^{(\beta-s)p} \big)  \sum_{P\in \mathcal{F}^k(Q\cap \Omega)}|P|^{1-\beta p}\\
&\leq  \Crr{maior}^{(k-k_0(\Omega))(\beta-s)p}  \sum_{Q\in  \mathcal{P}^{k_0(\Omega)}} |Q|^{(\beta-s)p}  \sum_{P\in \mathcal{F}^k(Q\cap \Omega)}|P|^{1-\beta p}\\
&\leq  C \Crr{maior}^{(k-k_0(\Omega))(\beta-s)p}  \sum_{Q\in  \mathcal{P}^{k_0(\Omega)}} |Q|^{(\beta-s)p} |Q|^{1-\beta p} \\
& \leq C \Crr{maior}^{(k-k_0(\Omega))(\beta-s)p} \sum_{Q\in  \mathcal{P}^{k_0(\Omega)}} |Q|^{1-s p}\\
&\leq   C  \Crr{menor}^{-k_0(\Omega)}   \Big(\frac{\Crr{maior}}{\Crr{menor}}\Big)^{k_0(\Omega)(1-s p)}   \Crr{maior}^{(k-k_0(\Omega))(\beta-s)p} |\Omega|^{1-s p}   
\end{align*} 
This concludes the proof. \end{proof}
\begin{remark} Suppose that there is $\Cll{compara}$ such that for every $k$ and every $Q, W\in \mathcal{P}^k$ we have
$$\frac{1}{\Crr{compara}}\leq    \frac{|Q|}{|W|}\leq \Crr{compara},$$
and 
$$\#\{  P\in \mathcal{P}^{k_0(\Omega)}\colon \ P\cap \Omega\neq \emptyset \}\leq \Cll{numero},$$
Then it is easy to see that one can choose $C'= \Crr{numero}\Crr{compara}C$.

\end{remark}

The following result is similar to results for Sobolev spaces  by Faraco and Rogers \cite{faraco}. See also Sickel \cite{sickel}.

\begin{corollary} If $\{ \Omega_r\}_{r\in \Lambda}$ is a $(1-ps, \Crr{domain},\Crr{domainc})$-regular family  then there is $\Cll{rel}$ such that  for every  $g\in \mathcal{B}^{s}_{p,q}\cap L^\infty$ and $r\in\Lambda$ we can find a  $\mathcal{B}^{s}_{p,q}$-representation
\begin{equation}\label{pdd} g\cdot 1_{\Omega_r}= \sum_k \sum_{Q\in \mathcal{P}^k, Q \subset \Omega_r }  d_Q^r   a_Q,\end{equation}
such that 
\begin{equation}\label{hiip1}  \Big( \sum_j \big(  \sum_r    \sum_{\substack{ Q\in \mathcal{P}^j \\  Q \subset \Omega_r }}  |d_Q^r|^p )^{q/p}  \Big)^{1/q} \leq \Crr{rel}  |g|_{\mathcal{B}^{s}_{p,q}}.\end{equation}
Note that 
$$\Omega=\cup_r\Omega_r$$
is  a $(1-ps, \Crr{domain},\Crr{domainc})$-regular domain and 
 $F(g)=g1_{\Omega}$ is a bounded operator in $\mathcal{B}^{s}_{p,q}\cap L^\infty$ satisfying 
\begin{equation}\label{G} |F|_{\mathcal{B}^{s}_{p,q}\cap L^\infty}\leq  \Crr{e}\Crr{no} \Big( 1+  \frac{\Crr{domain}^{1/p}}{(1-\Crr{domainc}^{q/p})^{1/q}} |\Omega|^{1/p-s}\Big).\end{equation}
Moreover
\begin{equation}\label{estG} |1_\Omega|_{\mathcal{B}^s_{p,q}}\leq  \frac{\Crr{domain}^{1/p}}{(1-\Crr{domainc}^{q/p})^{1/q}} |\Omega|^{1/p-s}.\end{equation}
\end{corollary}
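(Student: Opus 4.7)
The plan has four stages. First, I would observe that $\Omega:=\cup_r\Omega_r$ is itself a $(1-ps,\Crr{domain},\Crr{domainc})$-regular domain by choosing $\mathcal{F}^k(\Omega):=\bigsqcup_r \mathcal{F}^k(\Omega_r)$: properties (A) and (B) follow from the disjointness of the $\Omega_r$'s (and of the $\mathcal{F}$-elements within each), while (C) is inherited directly from the definition of a regular family applied to $\{\Omega_r\}$.

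Second, I would exhibit an explicit $\mathcal{B}^s_{p,q}$-representation of $1_\Omega$,
\begin{equation*}
1_\Omega=\sum_{k\geq k_0(\Omega)}\sum_{Q\in\mathcal{F}^k(\Omega)}|Q|^{1/p-s}\,a_Q,
\end{equation*}
where $a_Q=|Q|^{s-1/p}1_Q$ is the canonical Souza's atom on $Q$. Using (\ref{dom}) for $\Omega$ together with the geometric series $\sum_{k\geq 0}\Crr{domainc}^{kq/p}$ yields the cost bound (\ref{estG}). Since $|1_\Omega|_\infty=1$, Proposition \ref{mult33} applied to $F(g)=g\cdot 1_\Omega$ (using that $\mathcal{B}^s_{p,q}\cap L^\infty$ is a quasi-algebra) then gives (\ref{G}) immediately.

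Third, for the localized representations (\ref{pdd}) I would begin with the standard atomic representation $g=\sum_W k_W^g\,a_W$ provided by Corollary \ref{fou}, whose cost is controlled by $|g|_{\mathcal{B}^s_{p,q}}$. Multiplying term by term by $1_{\Omega_r}$ and using the nestedness of the grid, each product $1_{\Omega_r}\,a_W$ is either zero (when $W\cap\Omega_r=\emptyset$), equal to $a_W$ (when $W\subset\Omega_r$), or distributes as $\sum_{P\in\mathcal{F}(\Omega_r),\,P\subsetneq W}|W|^{s-1/p}|P|^{1/p-s}a_P$. Collecting contributions gives $d_Q^r=k_Q^g$ for $Q\subset\Omega_r$ with $Q\notin\mathcal{F}(\Omega_r)$, and
\begin{equation*}
d_Q^r=k_Q^g+|Q|^{1/p-s}\!\!\sum_{W\supsetneq Q}k_W^g|W|^{s-1/p}
\end{equation*}
for $Q\in\mathcal{F}(\Omega_r)$. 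By the averaging identity of Proposition \ref{boup}(B), this collapses to $d_Q^r=|Q|^{1/p-s}\cdot\mathrm{avg}_Q g$, which is in particular bounded by $|Q|^{1/p-s}|g|_\infty$.

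The main obstacle is the $\ell^q(\ell^p)$-bookkeeping that converts these coefficient estimates into (\ref{hiip1}). The sum splits into two pieces: the $Q\notin\mathcal{F}(\Omega_r)$ contribution is controlled by the standard-representation cost of $g$, with the disjointness of the $\Omega_r$'s preventing any double-counting over $r$; the $Q\in\mathcal{F}^j(\Omega_r)$ contribution is dominated by $|g|_\infty^p\sum_r\sum_{Q\in\mathcal{F}^j(\Omega_r)}|Q|^{1-sp}$, which by (C) is $\leq\Crr{domain}\Crr{domainc}^{j-k_0(\Omega)}|\Omega|^{1-sp}|g|_\infty^p$. Taking the $\ell^{q/p}$-norm in $j$ and summing the geometric series in $\Crr{domainc}^{q/p}$ (exactly as in the proof of (\ref{estG})) produces the required bound (\ref{hiip1}), with $\Crr{rel}$ absorbing the geometric constants, the factor $|\Omega|^{1/p-s}$, and the comparison between $|g|_\infty$ and the intersection norm $|g|_{\mathcal{B}^s_{p,q}\cap L^\infty}$ implicit in $g\in\mathcal{B}^s_{p,q}\cap L^\infty$.
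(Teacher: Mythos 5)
Your proposal is correct and follows essentially the same route as the paper: the explicit atomic representation of $1_\Omega$ over the families $\mathcal{F}^k$ together with property (C) and the geometric series gives (\ref{estG}), the quasi-algebra estimates of Proposition \ref{mult33} give (\ref{G}), and the localized representation (\ref{pdd}) comes from multiplying the standard representation of $g$ (Corollary \ref{fou}) against the $\mathcal{F}$-decomposition and invoking Proposition \ref{boup} plus (C) for the cost; the paper merely organizes the product of the two representations as the $u_1+u_2$ split of Proposition \ref{mult33} rather than distributing $1_{\Omega_r}a_W$ term by term, which is a cosmetic difference. One small inaccuracy: for $Q\in\mathcal{F}(\Omega_r)$ your formula $d_Q^r=k_Q^g+|Q|^{1/p-s}\sum_{W\supsetneq Q}k_W^g|W|^{s-1/p}$ should restrict the sum to those ancestors $W$ with $W\not\subset\Omega_r$ (an ancestor contained in $\Omega_r$ keeps its atom $a_W$ intact and contributes to $d_W^r$ instead), so the clean collapse to $|Q|^{1/p-s}\,\mathrm{avg}_Q\,g$ via Proposition \ref{boup}.B only holds when the $\mathcal{F}$-elements are maximal; however, the truncated sum is still a full tail of the ancestor chain and hence still equals an average of $g$ over some ancestor, so the bound $|d_Q^r|\leq |k_Q^g|+|Q|^{1/p-s}|g|_\infty$ and the rest of your bookkeeping survive unchanged.
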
 
\begin{proof} Notice that
$$f=1_{\cup_r \Omega_r}= \sum_k  \sum_{Q\in \mathcal{P}^k} c_Q a_Q,$$
where $c_Q=|Q|^{1/p-s}$ for every $Q\in \cup_k \cup_r \mathcal{F}^k(\Omega_r)$ and $c_Q=0$ otherwise.
Let 
$$g =\sum_k  \sum_{J\in \mathcal{P}^k} e_J a_J$$
be  $\mathcal{B}^s_{p,q}$-representations  $g$  given by Corollary \ref{fou}. Consider $u_1, u_2$ as in the proof of Proposition \ref{mult33}. By Proposition \ref{boup} we can get exactly the same estimate as in the proof of Proposition \ref{mult33}.

Note that those  $Q\in \mathcal{P}^k$ for which the corresponding atom $a_Q$  has  a non-vanishing \change{we replaced "no vanishing" by "non-vanishing"}  coefficient in the definition of $u_1$ belongs to $\cup_r \cup_j \mathcal{F}^j(\Omega_r)$, and moreover every $J\in \mathcal{P}^k$ for which the corresponding atom \change{we replaced "atoms" by "atom"}  $a_J$  has  non-vanishing \change{we replaced "no vanishing" by "non-vanishing"}  coefficients in the definition of $u_2$ is contained in some  $Q\in  \mathcal{F}^j(\Omega_r)$, for some $j$ and $r$. In particular $J\subset \Omega_r$.  So (\ref{pdd}) holds, with
$$d_Q^r= \big( \sum_{Q\subset J,  J\in \mathcal{P} } |J|^{s-1/p} c_Q e_J \big)  + \big(\sum_{Q\subset J, J\neq Q,J\in \mathcal{P}} |J|^{s-1/p}  c_J e_Q  \big) $$
for every $Q\subset \Omega_r$.  

Note also that
\begin{eqnarray*}
&&\Big( \sum_k \big(\sum_r \sum_{Q\in \mathcal{F}^k(\Omega_r)} |Q|^{1-sp}\big)^{q/p} \Big)^{1/q}\\
&\leq&  \Crr{domain}^{1/p} \Big( \sum_{k\geq k_0(\cup_r \Omega_r)}   \Crr{domainc}^{(k-k_0(\Omega))q/p} \Big)^{1/q} |\Omega|^{1/p-s}\\
&\leq&  \frac{\Crr{domain}^{1/p}}{(1-\Crr{domainc}^{q/p})^{1/q}} |\Omega|^{1/p-s}.
\end{eqnarray*}
so (\ref{estG})
and consequently (\ref{G}) hold. 
\end{proof}

\begin{remark} Using the methods in Faraco and Rogers \cite{faraco} one can show that  quasiballs in $[0,1]^n$ (and in particular  quasidisks in $[0,1]^2$, that is, domains delimited by quasicircles) give  examples of regular domains in $[0,1]^n$ endowed with the  good grid of dyadic $n$-cubes and the Lebesgue measure $m$.
\end{remark}

\section{A remarkable description of $\mathcal{B}^s_{1,1}$.}

When $p=q=1$ (and $s >0$  small), something curious happens. We can skip the good grid and characterise the Besov space  $\mathcal{B}^s_{1,1}$ of a homogeneous space using regular domains.   Fix $\Crr{domain}\geq 1$ and $\Crr{domainc} \in (0,1)$. Let $\mathcal{W}$ be the family of all $(1-s, \Crr{domain},\Crr{domainc})$-regular domains. Of course $\mathcal{P}\subset \mathcal{W}$. Let $\hat{\mathcal{W}}$ be a family of sets satisfying 
$$\mathcal{P}\subset \hat{\mathcal{W}} \subset \mathcal{W}$$
Define $B^{1-s}$  as the set of all functions $f \in L^{1/(1-s)}$ that can be written as
\begin{equation}\label{ree} f=\sum_{i=0}^{\infty}   c_i \frac{1_{A_i}}{|A_i|^{1-s}},\end{equation}
where $A_i\in  \hat{\mathcal{W}}$ for every $i\in \mathbb{N}$  and 
$$\sum_i |c_i| < \infty.$$
It is easy to see that 
$$|f|_{1/(1-s)}\leq \sum_i |c_i|.$$
Define
$$|f|_{B^{1-s}}=\inf \sum_i |c_i|,$$
where the infimum runs over all possible representations (\ref{ree}). One can see that $(B^{1-s},|\cdot|_{B^{1-s}})$ is a normed vector space.

\begin{proposition} \label{rema}We have that $B^{1-s}=\mathcal{B}^{s}_{1,1}(\mathcal{P})$ and the corresponding norms are equivalent.\end{proposition}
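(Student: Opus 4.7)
The plan is to establish two continuous inclusions $\mathcal{B}^s_{1,1}\hookrightarrow B^{1-s}$ and $B^{1-s}\hookrightarrow \mathcal{B}^s_{1,1}$, together with quantitative norm bounds on each side.

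\emph{The easy direction.} First I would observe that for $p=1$ the canonical $(s,1)$-Souza's atom on $Q\in\mathcal{P}$ is exactly $a_Q=|Q|^{s-1}1_Q=1_Q/|Q|^{1-s}$, and that by hypothesis $\mathcal{P}\subset\hat{\mathcal{W}}$. Consequently, after using the phase-invariance of Souza's atoms to absorb $\operatorname{sgn}s_Q$ into the coefficient, any $\mathcal{B}^s_{1,1}$-representation $f=\sum_k\sum_{Q\in\mathcal{P}^k}s_Q a_Q$ can be flattened into a single sum $f=\sum_i c_i\,1_{A_i}/|A_i|^{1-s}$ with $A_i\in\mathcal{P}\subset\hat{\mathcal{W}}$ and $\sum_i|c_i|=\sum_{k,Q}|s_Q|$. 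To see that $f$ actually belongs to $L^{1/(1-s)}$ (so that it qualifies as an element of $B^{1-s}$) I would invoke Proposition \ref{lp} with $t=1/(1-s)$ and $p=q=1$: the exponent $t(s-1/p+1/t)$ vanishes and, by Case A of Proposition \ref{holder}, the factor $\Crr{co}(t,1,\cdot)$ reduces to a supremum, keeping $\Crr{kt}$ finite. This yields $|f|_{B^{1-s}}\leq|f|_{\mathcal{B}^s_{1,1}}$.

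\emph{The main direction.} The crux is to control the $\mathcal{B}^s_{1,1}$-norm of the normalised indicator of an arbitrary $A\in\hat{\mathcal{W}}$. Since $A$ is a $(1-s,\Crr{domain},\Crr{domainc})$-regular domain, it decomposes as $A=\bigsqcup_k\bigsqcup_{Q\in\mathcal{F}^k(A)}Q$ with $Q\in\mathcal{P}^k$, so
\begin{equation*}
\frac{1_A}{|A|^{1-s}}=\sum_k\sum_{Q\in\mathcal{F}^k(A)}\frac{|Q|^{1-s}}{|A|^{1-s}}\,a_Q
\end{equation*}
is a $\mathcal{B}^s_{1,1}$-representation whose cost is dominated by the geometric series $\sum_k\Crr{domain}\,\Crr{domainc}^{\,k-k_0(A)}\leq \Crr{domain}/(1-\Crr{domainc})$. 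Absolute $L^1$-convergence of the outer sum is immediate because, inside a fixed level, the supports are pairwise disjoint, so the level-$k$ partial sum has $L^1$-norm at most $|I|^s$ times its cost, which is summable in $k$.

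\emph{Assembly.} Given a $B^{1-s}$-representation $f=\sum_i c_i\,1_{A_i}/|A_i|^{1-s}$, I would substitute the above expansion for each $A_i$ and collect contributions at each $Q\in\mathcal{P}^k$ by setting
\begin{equation*}
s_Q=\sum_{i\,:\,Q\in\mathcal{F}^k(A_i)}c_i\,\frac{|Q|^{1-s}}{|A_i|^{1-s}}.
\end{equation*}
The majorising double series is bounded by $\Crr{domain}(1-\Crr{domainc})^{-1}\sum_i|c_i|<\infty$, so Fubini legitimates both the regrouping and the absolute $L^1$-convergence required by the definition of a $\mathcal{B}^s_{1,1}$-representation. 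Taking an infimum delivers $|f|_{\mathcal{B}^s_{1,1}}\leq \Crr{domain}(1-\Crr{domainc})^{-1}|f|_{B^{1-s}}$. The only delicate step is this Fubini-style bookkeeping; everything else follows straight from the regular-domain defining inequality and the convergence of $\sum_k\Crr{domainc}^{\,k}$.
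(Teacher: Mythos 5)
Your proposal is correct and follows essentially the same route as the paper: the inclusion $\mathcal{B}^s_{1,1}\subset B^{1-s}$ comes from flattening a Souza-atom representation (using $\mathcal{P}\subset\hat{\mathcal{W}}$), and the reverse inclusion comes from the bound $|1_A/|A|^{1-s}|_{\mathcal{B}^s_{1,1}}\leq \Crr{domain}(1-\Crr{domainc})^{-1}$ for regular domains, summed over the $B^{1-s}$-representation. The only cosmetic difference is that the paper cites the previously established estimate (\ref{estG}) for $|1_\Omega|_{\mathcal{B}^s_{p,q}}$, whereas you rederive it inline for $p=q=1$ from the defining inequality (\ref{dom}).
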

\begin{proof} Note that (\ref{estG})  says that there is $C$ such that if  $A\in  \mathcal{W}$ then   $1_{A} \in \mathcal{B}^s_{1,1}(\mathcal{P})$  and $$|1_A|_{\mathcal{B}^s_{1,1}(\mathcal{P})}\leq C |A|^{1-s}.$$
In particular, \change{we added a comma}  if $f$ has a representation (\ref{ree}) we conclude that
$$|f|_{\mathcal{B}^{s}_{1,1}(\mathcal{P})}\leq  C  |f|_{B^{1-s}}.$$
 In particular $B^{1-s}\subset \mathcal{B}^{s}_{1,1}(\mathcal{P}).$ On the other hand if $g\in \mathcal{B}^{s}_{1,1}(\mathcal{P}).$ then we can  write
$$g = \sum_{k=0}^{\infty} \sum_{Q \in \mathcal{P}^k}    s_Q \frac{1_Q}{|Q|^{1-s}}$$
$$\sum_{P\in \mathcal{P}} |s_Q|= \sum_{k=0}^{\infty}  \sum_{Q \in \mathcal{P}^k}    |s_Q|  < \infty.$$
and $|g|_{\mathcal{B}^{s}_{1,1}(\mathcal{P})}$ is the infimum of $\sum_{P\in \mathcal{P}} |s_Q|$ over all possible representations. In particular  $g\in B^{1-s}$
and
$$|g|_{B^{1-s}}\leq |g|_{\mathcal{B}^{s}_{1,1}(\mathcal{P})}.$$
\end{proof}

\begin{remark} Let $I=[0,1]$ with the dyadic grid $\mathcal{D}$ and the Lebesgue measure $m$. We prove in Part IV that $\mathcal{B}^s_{1,1}(\mathcal{D})$, with $0< s<1$, is the  Besov space $B^s_{1,1}([0,1])$, and its norms are equivalent. Note that every interval $[a,b]\subset [0,1]$ is a $(1-s, 2,2^{s-1})$-regular domain. So we can apply Proposition \ref{rema} with  $\hat{\mathcal{W}}=\{[a,b], \ 0\leq a < b\leq 1\}.$  That is, $f$ belongs to $B^s_{1,1}([0,1])$ if and only if it can be written as in (\ref{ree}), where every $A_i$ is an interval and $\sum_i |c_i| < \infty$, and the norm in $B^s_{1,1}([0,1])$ is equivalent to the infimum  of $\sum_i |c_i|$ over all possible such representations. This characterisation of the Besov space $B^s_{1,1}([0,1])$  was first obtained   by Souza \cite{souzao1}.

\end{remark}

\section{Left compositions.} The following \change{we replaced "folllowing" by "following"}  result generalizes a well-known result  on left composition operators acting  on Besov spaces of $\mathbb{R}^n$.  See  Bourdaud and Kateb \cite{k1}  \cite{k0}\cite{kateb1} for recent  \change{we replaced "recents" by "recent"}  developments on the study of left compositions on  Besov spaces of $\mathbb{R}^n$. 

\begin{proposition} \label{expo} Let 
$$g\colon I \rightarrow \mathbb{C}$$ be a Lipchitz function such that $g(0)=0$. Then the left composition
$$L_g\colon \mathcal{B}^s_{p,q} \rightarrow \mathcal{B}^s_{p,q}$$
defined by $L_g(f)=g\circ f$ is well-defined \change{we replaced "well defined"by "well-defined"} and
$$|g\circ f|_p+ osc^s_{p,q}(g\circ f)\leq K (|f|_p +osc^s_{p,q}(f)),$$
where $K$ is the  Lipchitz constant of $g$. Consequently there exists $C$ such that 
$$|L_g(f)|_{\mathcal{B}^s_{p,q}}\leq C|f|_{\mathcal{B}^s_{p,q}}$$
for every $f\in \mathcal{B}^s_{p,q}$.
\end{proposition}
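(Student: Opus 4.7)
The plan is to use the mean oscillation characterisation of $\mathcal{B}^s_{p,q}$ given by Theorem \ref{alte}, since the definition of $osc_p(\cdot,Q)$ interacts perfectly with Lipschitz left composition. I would proceed in three short steps.

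First, since $g(0)=0$ and $g$ is $K$-Lipschitz on $\mathbb{C}$ (taking values in $\mathbb{C}$), we have the pointwise bound $|g(z)|\leq K|z|$ for every $z\in\mathbb{C}$. Consequently $|g\circ f(x)|\leq K|f(x)|$ almost everywhere, which immediately gives $|g\circ f|_p \leq K|f|_p$, and in particular $g\circ f\in L^p$.

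Second, I would compare the oscillations atom by atom. For any $Q\in\mathcal{P}^k$ and any $c\in\mathbb{C}$, pointwise Lipschitzness gives $|g\circ f(x)-g(c)|\leq K|f(x)-c|$ on $Q$, hence
\begin{equation*}
\Bigl(\int_Q |g\circ f(x)-g(c)|^p\,dm(x)\Bigr)^{1/p}\leq K\Bigl(\int_Q |f(x)-c|^p\,dm(x)\Bigr)^{1/p}.
\end{equation*}
Since $osc_p(g\circ f,Q)$ is the infimum over all complex constants (not just those of the form $g(c)$), taking the infimum over $c\in\mathbb{C}$ on the right side yields $osc_p(g\circ f,Q)\leq K\,osc_p(f,Q)$. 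Plugging this into the definition (\ref{oc}) of $osc^s_{p,q}$ term by term gives $osc^s_{p,q}(g\circ f)\leq K\,osc^s_{p,q}(f)$. Combining with the $L^p$ bound from the first step produces
\begin{equation*}
|g\circ f|_p + osc^s_{p,q}(g\circ f)\leq K\bigl(|f|_p+osc^s_{p,q}(f)\bigr),
\end{equation*}
which is the first displayed inequality, and in particular $N_{osc}(g\circ f)\leq K\, N_{osc}(f)$.

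Third, invoking Theorem \ref{alte} — specifically the equivalence $|\cdot|_{\mathcal{B}^s_{p,q}}\asymp N_{osc}$ given by (\ref{in1})--(\ref{in4}) — one obtains a constant $C$ depending only on $K$ and on the constants $\Crr{e},\Crr{no},\Crr{c2}$ of Theorem \ref{alte}, such that $|L_g(f)|_{\mathcal{B}^s_{p,q}}\leq C|f|_{\mathcal{B}^s_{p,q}}$. There is essentially no obstacle here: the argument is entirely soft, with the crucial observation being that the mean oscillation norm is invariant under the trick of replacing arbitrary constants by constants of the form $g(c)$. This is precisely the advantage of characterising $\mathcal{B}^s_{p,q}$ by mean oscillations rather than by atomic decompositions, as the latter interact very poorly with non-linear operations like $f\mapsto g\circ f$.
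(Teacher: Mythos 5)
Your proposal is correct and follows essentially the same route as the paper: the pointwise bound $|g(z)|\leq K|z|$ for the $L^p$ term, the restriction of the infimum in $osc_p(g\circ f,Q)$ to constants of the form $g(a)$ to get $osc_p(g\circ f,Q)\leq K\,osc_p(f,Q)$, and then the equivalence $N_{osc}\asymp|\cdot|_{\mathcal{B}^s_{p,q}}$ from Theorem \ref{alte}. Your write-up is in fact slightly more explicit than the paper's about the last step.
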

\begin{proof} Note that 
\begin{align}
osc_p(g\circ f, Q)&=\inf_{a\in \mathbb{C}}\Big(  \int _Q |g(f(x))-a|^p \ dm(x) \Big)^{1/p} \nonumber \\
& \leq \inf_{a\in \mathbb{C}}\Big(  \int _Q |g(f(x))-g(a)|^p \ dm(x) \Big)^{1/p}  \nonumber \\ 
& \leq K  \inf_{a\in \mathbb{C}}\Big(  \int _Q |f(x)-a|^p \ dm(x) \Big)^{1/p} = K osc_p(f, Q).
\end{align}
So it easily follows that $osc^s_{p,q}(g\circ f)\leq K osc^s_{p,q}(f)$. Of course $|g\circ f|_p\leq K|f|_p$.  In particular $g\circ f \in \mathcal{B}^s_{p,q}.$
\end{proof}
 %\setcounter{tocdepth}{10}.
%\listoftodos

%\nocite{*}
\changei{Reference Smania, D. "Classic and Exotic Besov spaces defined by good grids" was published in The Journal of Geometric Analysis. We updated the reference.}
\bibliographystyle{abbrv}
\bibliography{bibliografiab}

\def\cprime{$'$}
\begin{thebibliography}{10}

\bibitem{sharp}
R.~Alvarado and M.~Mitrea.
\newblock {\em Hardy spaces on {A}hlfors-regular quasi metric spaces}, volume
  2142 of {\em Lecture Notes in Mathematics}.
\newblock Springer, Cham, 2015.
\newblock A sharp theory.

\bibitem{bb}
V.~Baladi.
\newblock {\em Positive transfer operators and decay of correlations},
  volume~16 of {\em Advanced Series in Nonlinear Dynamics}.
\newblock World Scientific Publishing Co., Inc., River Edge, NJ, 2000.

\bibitem{besov2}
O.~Besov and G.~Kalyabin.
\newblock Spaces of differentiable functions.
\newblock In {\em Function spaces, differential operators and nonlinear
  analysis ({T}eistungen, 2001)}, pages 3--21. Birkh\"auser, Basel, 2003.

\bibitem{besov}
O.~V. Besov.
\newblock On some families of functional spaces. {I}mbedding and extension
  theorems.
\newblock {\em Dokl. Akad. Nauk SSSR}, 126:1163--1165, 1959.

\bibitem{k0}
G.~Bourdaud and D.~Kateb.
\newblock Calcul fonctionnel dans certains espaces de {B}esov.
\newblock {\em Ann. Inst. Fourier (Grenoble)}, 40(1):153--162, 1990.

\bibitem{k1}
G.~Bourdaud and D.~Kateb.
\newblock Fonctions qui op\'erent sur les espaces de {B}esov.
\newblock {\em Proc. Amer. Math. Soc.}, 112(4):1067--1076, 1991.

\bibitem{kateb1}
G.~Bourdaud and D.~Kateb.
\newblock Fonctions qui op\`erent sur les espaces de {B}esov.
\newblock {\em Math. Ann.}, 303(4):653--675, 1995.

\bibitem{broise}
A.~Broise.
\newblock Transformations dilatantes de l'intervalle et th\'{e}or\`emes
  limites.
\newblock {\em Ast\'{e}risque}, (238):1--109, 1996.
\newblock \'{E}tudes spectrales d'op\'{e}rateurs de transfert et applications.

\bibitem{christ}
M.~Christ.
\newblock A {$T(b)$} theorem with remarks on analytic capacity and the {C}auchy
  integral.
\newblock {\em Colloq. Math.}, 60/61(2):601--628, 1990.

\bibitem{spline1}
Z.~Ciesielski.
\newblock Constructive function theory and spline systems.
\newblock {\em Studia Math.}, 53(3):277--302, 1975.

\bibitem{coifman}
R.~R. Coifman.
\newblock A real variable characterization of {$H^{p}$}.
\newblock {\em Studia Math.}, 51:269--274, 1974.

\bibitem{cw}
R.~R. Coifman and G.~Weiss.
\newblock {\em Analyse harmonique non-commutative sur certains espaces
  homog\`enes}.
\newblock Lecture Notes in Mathematics, Vol. 242. Springer-Verlag, Berlin-New
  York, 1971.
\newblock \'Etude de certaines int\'egrales singuli\`eres.

\bibitem{examples}
R.~R. Coifman and G.~Weiss.
\newblock Extensions of {H}ardy spaces and their use in analysis.
\newblock {\em Bull. Amer. Math. Soc.}, 83(4):569--645, 1977.

\bibitem{souzao1}
G.~S. de~Souza.
\newblock The atomic decomposition of {B}esov-{B}ergman-{L}ipschitz spaces.
\newblock {\em Proc. Amer. Math. Soc.}, 94(4):682--686, 1985.

\bibitem{souzao2}
G.~S. de~Souza.
\newblock Two more characterizations of {B}esov-{B}ergman-{L}ipschitz spaces.
\newblock {\em Real Anal. Exchange}, 11(1):75--79, 1985/86.
\newblock The ninth summer real analysis symposium (Louisville, Ky., 1985).

\bibitem{sn}
G.~S. de~Souza, R.~O'Neil, and G.~Sampson.
\newblock Several characterizations for the special atom spaces with
  applications.
\newblock {\em Rev. Mat. Iberoamericana}, 2(3):333--355, 1986.

\bibitem{deng}
D.~Deng and Y.~Han.
\newblock {\em Harmonic analysis on spaces of homogeneous type}, volume 1966 of
  {\em Lecture Notes in Mathematics}.
\newblock Springer-Verlag, Berlin, 2009.
\newblock With a preface by Yves Meyer.

\bibitem{spline2}
R.~A. DeVore and V.~A. Popov.
\newblock Interpolation of {B}esov spaces.
\newblock {\em Trans. Amer. Math. Soc.}, 305(1):397--414, 1988.

\bibitem{mo}
J.~R. Dorronsoro.
\newblock Mean oscillation and {B}esov spaces.
\newblock {\em Canad. Math. Bull.}, 28(4):474--480, 1985.

\bibitem{et}
D.~E. Edmunds and H.~Triebel.
\newblock {\em Function spaces, entropy numbers, differential operators},
  volume 120 of {\em Cambridge Tracts in Mathematics}.
\newblock Cambridge University Press, Cambridge, 1996.

\bibitem{faraco}
D.~Faraco and K.~M. Rogers.
\newblock The {S}obolev norm of characteristic functions with applications to
  the {C}alder\'on inverse problem.
\newblock {\em Q. J. Math.}, 64(1):133--147, 2013.

\bibitem{fj}
M.~Frazier and B.~Jawerth.
\newblock Decomposition of {B}esov spaces.
\newblock {\em Indiana Univ. Math. J.}, 34(4):777--799, 1985.

\bibitem{fj2}
M.~Frazier and B.~Jawerth.
\newblock A discrete transform and decompositions of distribution spaces.
\newblock {\em J. Funct. Anal.}, 93(1):34--170, 1990.

\bibitem{gw}
M.~Girardi and W.~Sweldens.
\newblock A new class of unbalanced {H}aar wavelets that form an unconditional
  basis for {$L_p$} on general measure spaces.
\newblock {\em J. Fourier Anal. Appl.}, 3(4):457--474, 1997.

\bibitem{martingale}
C.~Gu and M.~Taibleson.
\newblock Besov spaces on nonhomogeneous martingales.
\newblock In {\em Harmonic analysis and discrete potential theory ({F}rascati,
  1991)}, pages 69--84. Plenum, New York, 1992.

\bibitem{haar}
A.~Haar.
\newblock Zur {T}heorie der orthogonalen {F}unktionensysteme.
\newblock {\em Math. Ann.}, 69(3):331--371, 1910.

\bibitem{han2}
Y.~Han, S.~Lu, and D.~Yang.
\newblock Inhomogeneous {B}esov and {T}riebel-{L}izorkin spaces on spaces of
  homogeneous type.
\newblock {\em Approx. Theory Appl. (N.S.)}, 15(3):37--65, 1999.

\bibitem{hs}
Y.~S. Han and E.~T. Sawyer.
\newblock Littlewood-{P}aley theory on spaces of homogeneous type and the
  classical function spaces.
\newblock {\em Mem. Amer. Math. Soc.}, 110(530):vi+126, 1994.

\bibitem{hk3}
F.~Hofbauer and G.~Keller.
\newblock Ergodic properties of invariant measures for piecewise monotonic
  transformations.
\newblock {\em Math. Z.}, 180(1):119--140, 1982.

\bibitem{jaffard}
S.~Jaffard.
\newblock D\'ecompositions en ondelettes.
\newblock In {\em Development of mathematics 1950--2000}, pages 609--634.
  Birkh\"auser, Basel, 2000.

\bibitem{jaffard2}
S.~Jaffard, Y.~Meyer, and R.~D. Ryan.
\newblock {\em Wavelets}.
\newblock Society for Industrial and Applied Mathematics (SIAM), Philadelphia,
  PA, revised edition, 2001.
\newblock Tools for science \& technology.

\bibitem{ka}
A.~Kairema, J.~Li, M.~C. Pereyra, and L.~A. Ward.
\newblock Haar bases on quasi-metric measure spaces, and dyadic structure
  theorems for function spaces on product spaces of homogeneous type.
\newblock {\em J. Funct. Anal.}, 271(7):1793--1843, 2016.

\bibitem{koskela2}
P.~Koskela, D.~Yang, and Y.~Zhou.
\newblock Pointwise characterizations of {B}esov and {T}riebel-{L}izorkin
  spaces and quasiconformal mappings.
\newblock {\em Adv. Math.}, 226(4):3579--3621, 2011.

\bibitem{ly}
A.~Lasota and J.~A. Yorke.
\newblock On the existence of invariant measures for piecewise monotonic
  transformations.
\newblock {\em Trans. Amer. Math. Soc.}, 186:481--488 (1974), 1973.

\bibitem{latter}
R.~H. Latter.
\newblock A characterization of {$H^{p}({\bf R}^{n})$} in terms of atoms.
\newblock {\em Studia Math.}, 62(1):93--101, 1978.

\bibitem{sob}
V.~G. Maz'ya and T.~O. Shaposhnikova.
\newblock {\em Theory of {S}obolev multipliers}, volume 337 of {\em Grundlehren
  der Mathematischen Wissenschaften [Fundamental Principles of Mathematical
  Sciences]}.
\newblock Springer-Verlag, Berlin, 2009.
\newblock With applications to differential and integral operators.

\bibitem{oswald0}
P.~Oswald.
\newblock On function spaces related to finite element approximation theory.
\newblock {\em Z. Anal. Anwendungen}, 9(1):43--64, 1990.

\bibitem{oswald}
P.~Oswald.
\newblock {\em Multilevel finite element approximation}.
\newblock Teubner Skripten zur Numerik. [Teubner Scripts on Numerical
  Mathematics]. B. G. Teubner, Stuttgart, 1994.
\newblock Theory and applications.

\bibitem{peetre}
J.~Peetre.
\newblock {\em New thoughts on {B}esov spaces}.
\newblock Mathematics Department, Duke University, Durham, N.C., 1976.
\newblock Duke University Mathematics Series, No. 1.

\bibitem{corjan}
C.~Schneider and J.~Vyb\'\i{ral}.
\newblock Non-smooth atomic decompositions, traces on {L}ipschitz domains, and
  pointwise multipliers in function spaces.
\newblock {\em J. Funct. Anal.}, 264(5):1197--1237, 2013.

\bibitem{sickel}
W.~Sickel.
\newblock Pointwise multipliers of {L}izorkin-{T}riebel spaces.
\newblock In {\em The {M}az$\prime$ya anniversary collection, {V}ol. 2
  ({R}ostock, 1998)}, volume 110 of {\em Oper. Theory Adv. Appl.}, pages
  295--321. Birkh\"auser, Basel, 1999.

\bibitem{smania-homo}
D.~Smania.
\newblock Classic and exotic {B}esov spaces induced by good grids.
\newblock {\em J. Geom. Anal.}, 2020,
  \url{https://doi.org/10.1007/s12220-020-00361-x}.

\bibitem{stein}
E.~M. Stein.
\newblock {\em Singular integrals and differentiability properties of
  functions}.
\newblock Princeton Mathematical Series, No. 30. Princeton University Press,
  Princeton, N.J., 1970.

\bibitem{multi}
H.~Triebel.
\newblock Multiplication properties of {B}esov spaces.
\newblock {\em Ann. Mat. Pura Appl. (4)}, 114:87--102, 1977.

\bibitem{ns}
H.~Triebel.
\newblock Non-smooth atoms and pointwise multipliers in function spaces.
\newblock {\em Ann. Mat. Pura Appl. (4)}, 182(4):457--486, 2003.

\bibitem{fractal2}
H.~Triebel.
\newblock A new approach to function spaces on quasi-metric spaces.
\newblock {\em Rev. Mat. Complut.}, 18(1):7--48, 2005.

\bibitem{tbook}
H.~Triebel.
\newblock {\em Theory of function spaces}.
\newblock Modern Birkh\"auser Classics. Birkh\"auser/Springer Basel AG, Basel,
  2010.
\newblock Reprint of 1983 edition [MR0730762], Also published in 1983 by
  Birkh\"auser Verlag [MR0781540].

\bibitem{fractal}
H.~Triebel.
\newblock {\em Fractals and spectra}.
\newblock Modern Birkh\"auser Classics. Birkh\"auser Verlag, Basel, 2011.
\newblock Related to Fourier analysis and function spaces.

\bibitem{martin}
D.~Williams.
\newblock {\em Probability with martingales}.
\newblock Cambridge Mathematical Textbooks. Cambridge University Press,
  Cambridge, 1991.

\bibitem{ysy}
W.~Yuan, W.~Sickel, and D.~Yang.
\newblock {\em Morrey and {C}ampanato meet {B}esov, {L}izorkin and {T}riebel},
  volume 2005 of {\em Lecture Notes in Mathematics}.
\newblock Springer-Verlag, Berlin, 2010.

\end{thebibliography}
%\printbibliography %para o biber

\end{document}